\documentclass[10pt]{amsart}
\textwidth=14.5cm \oddsidemargin=1cm \evensidemargin=1cm
\usepackage{amsmath,amsthm,fixltx2e}
\usepackage{color}
\usepackage{amsxtra}
\usepackage{amscd}
\usepackage{amsfonts}
\usepackage{amssymb}
\usepackage{eucal}

\usepackage[all]{xy}

\newtheorem{theorem}{Theorem}[section]
\newtheorem{cor}[theorem]{Corollary}
\newtheorem{lem}[theorem]{Lemma}
\newtheorem{prop}[theorem]{Proposition}

\newtheorem{thm}[theorem]{Theorem}
\newtheorem{claim}[theorem]{Claim}
\newtheorem{rem}[theorem]{Remark}
\newtheorem{ex}[theorem]{Example}

\newcommand{\nc}{\newcommand}

\nc\ol{\overline} \nc\ul{\underline} \nc\wt{\widetilde}
\nc{\z}{\zeta}

\nc{\ZZ}{{\mathbb Z}} \nc{\NN}{{\mathbb N}} \nc{\CC}{{\mathbb C}}

\nc{\A}{{\mathbb A}}  \nc\U{U}

\nc{\F}{{\mathcal F}} \nc{\N}{{\mathcal N}} \nc{\Aa}{{\mathcal A}}

\DeclareMathOperator{\gr}{\mathrm{gr}}
\DeclareMathOperator{\Gr}{\mathrm{Gr}}

\DeclareMathOperator{\diag}{\mathrm{diag}}
\DeclareMathOperator{\tr}{\mathrm{tr}}

\DeclareMathOperator{\Sym}{\mathrm{Sym}}
\DeclareMathOperator{\Ker}{\mathrm{Ker}}
\DeclareMathOperator{\Aut}{\mathrm{Aut}}
\DeclareMathOperator{\Cas}{\mathrm{Cas}}

\DeclareMathOperator{\End}{\mathrm{End}}
\DeclareMathOperator{\cl}{\mathrm{cl}}
\DeclareMathOperator{\s}{\mathrm{S}}

\DeclareMathOperator{\HC}{\mathrm{HC}}

\newcommand\q{\mathfrak q}
\newcommand\h{\mathfrak h}
\newcommand\C{\mathfrak c}
\newcommand\m{\mathfrak m}

\newcommand{\zz}{\mathfrak{z}}

\newcommand{\gl}{\mathfrak{gl}}
\newcommand{\ssl}{\mathfrak{sl}}
\newcommand{\g}{\mathfrak{g}}
\newcommand{\spn}{\mathfrak{sp}}
\newcommand{\so}{\mathfrak{so}}

\newcommand{\ad}{\mathop{\rm ad}\nolimits}
\nc{\iso}{{\stackrel{\sim}{\longrightarrow}}}

\begin{document}

\author[Ivan Losev]{Ivan Losev}
 \address{Department of Mathematics, Northeastern University, 360 Huntington Avenue, Boston, MA 02115, USA}
 \email{i.loseu@neu.edu}

\author[Alexander Tsymbaliuk]{Alexander Tsymbaliuk}
 \address{Independent University of Moscow, 11 Bol'shoy Vlas'evskiy per., Moscow 119002, Russia}
 \curraddr{Department of Mathematics, MIT, 77 Massachusetts Avenue, Cambridge, MA  02139, USA}
 \email{sasha\_ts@mit.edu}

\title[Infinitesimal Cherednik algebras as $W$-algebras]
 {Infinitesimal Cherednik algebras as $W$-algebras}

\begin{abstract}
  In this article we establish an isomorphism between universal infinitesimal Cherednik algebras
 and $W$-algebras for Lie algebras of the same type and \textit{1-block} nilpotent elements.
 As a consequence we obtain some fundamental results about infinitesimal Cherednik algebras.
\end{abstract}

\maketitle

\begin{center}
 \emph{Dedicated to Evgeny Borisovich Dynkin, on his 90th birthday.}
\end{center}

\section*{Introduction}
  This paper is aimed at the identification of two algebras of seemingly different nature.
 The first, finite $W$-algebras, are algebras constructed from a pair $(\g, e)$, where $e$ is a nilpotent element
 of a finite dimensional simple Lie algebra $\g$. Their theory has been extensively studied during the last decade.
 For the related references see, for example, reviews~\cite{L8,W} and articles~\cite{BGK,BK1,BK2,GG,L1,L5,L6,P1,P2}.

  The second class of algebras we consider in this paper are the so called infinitesimal Cherednik algebras of type $\gl_n$ and $\spn_{2n}$,
 introduced in~\cite{EGG}. These are certain continuous analogues of the rational Cherednik algebras
 and in the case of $\gl_{n}$ are deformations of the universal enveloping algebra $\U(\ssl_{n+1})$.
 In both cases we call $n$ the \emph{rank} of an algebra. The theory of those algebras is less developed, while the main references there
 are:~\cite{EGG,T1,T2,DT}.

\medskip
 This paper is organized in the following way:

\medskip
 $\bullet$
  In Section 1, we recall the definitions of infinitesimal Cherednik algebras $H_\z(\gl_n),\ H_\z(\spn_{2n})$, and introduce their
 modified versions, called the universal length $m$ infinitesimal Cherednik algebras.
 We also recall the definitions and basic results about the finite $W$-algebras $\U(\g,e)$.

\medskip
 $\bullet$
  In Section 2, we prove our main result, establishing an abstract isomorphism of $W$-algebras $\U(\ssl_{n+m},e_m)$ (respectively $\U(\spn_{2n+2m},e_m)$)
 with the universal infinitesimal Cherednik algebras $H_m(\gl_n)$ (respectively $H_m(\spn_{2n})$).

\medskip
 $\bullet$
  In Section 3, we establish explicitly a Poisson analogue of the aforementioned isomorphism.
 As a result we deduce two claims needed to carry out the arguments of the previous section.

\medskip
 $\bullet$
  In Section 4, we derive several important consequences about algebras $H_\z(\gl_n),\ H_\z(\spn_{2n})$.
 This clarifies some lengthy computations from~\cite{T1,T2,DT} and proves new results.
 Using the results of~\cite[Section 3]{DT}, about the \emph{Casimir element} of $H_\z(\gl_n)$, we
 determine the aforementioned isomorphism $H_m(\gl_n)\iso \U(\ssl_{n+m},e_m)$ explicitly.

\medskip
 $\bullet$
  In Section 5, we recall the machinery of completions of the graded deformations of Poisson
 algebras, developed by the first author in~\cite{L1}. This provides the decomposition theorem for the completions of infinitesimal Cherednik algebras.
 This is analogous to a result by Bezrukavnikov and Etingof (\cite[Theorem 3.2]{BE}) in the theory of Rational Cherednik algebras.

\medskip
 $\bullet$
  In the Appendix, we provide some computations.

\subsection*{Acknowledgments} We are grateful to Pavel Etingof for numerous stimulating discussions. The first author was supported
by the NSF under grants DMS-0900907, DMS-1161584.

%%%%%%%%%%%%%%%%%%%%%%%%%%%%%%%%%%%%%%%%%%%%%%%%%%%%%%%%%%%%%%%%%%%%%%%%%%%%%%%%%%%%%%%%%%%%%%%%%%%%%%%%%%%%%%%%%%%%%%%%%%%%%%%%%%%%%%%%%%%%%%%%%%%%%%

\section{Basic definitions}
\subsection{Infinitesimal Cherednik algebras of $\gl_n$}

  We recall the definition of the infinitesimal Cherednik algebras $H_\z(\gl_n)$ following~\cite{EGG}.
 Let $V_n$ and $V_n^*$ be the basic representation of $\gl_n$ and its dual. Choose a basis $\{y_i\}_{1\leq i\leq n}$ of $V_n$ and let
 $\{x_i\}_{1\leq i \leq n}$ denote the dual basis of $V_n^*$. For any $\gl_n$-invariant pairing $\z:V_n \times V_n^{*} \to \U(\gl_n)$,
 define an algebra $H_\z(\gl_n)$ as the quotient of the semi-direct product algebra $\U(\gl_n)\ltimes T(V_n \oplus V_n^*)$ by the
 relations $[y,x]=\z(y,x)$ and $[x,x']=[y,y']=0$ for all $x,x' \in V_n^{*}$ and $y,y' \in V_n$. Consider an algebra filtration on
 $H_\z(\gl_n)$ by setting $\deg(V_n)=\deg(V_n^*)=1$ and $\deg (\gl_n)=0$.

\medskip
\begin{defn}
  We say that $H_\z(\gl_n)$ \emph{satisfies the PBW property} if the natural surjective map
 $\U(\gl_n)\ltimes S(V_n\oplus V_n^*) \twoheadrightarrow \mathrm{gr} H_\z(\gl_n)$ is an isomorphism, where $S$ denotes the symmetric algebra.
 We call these $H_\z(\gl_n)$ the \emph{infinitesimal Cherednik algebras of $\gl_n$}.
\end{defn}

\medskip
  It was shown in \cite[Theorem 4.2]{EGG}, that the PBW property holds for $H_\z(\gl_n)$ if and only if $\z=\sum_{j=0}^k \z_j r_j$ for some
 nonnegative integer $k$ and $\z_j \in \mathbb{C}$, where $r_j(y,x)\in \U(\gl_n)$ is the symmetrization of $\alpha_j(y,x)\in S(\gl_n)\simeq \CC[\gl_n]$
 and $\alpha_j(y,x)$ is defined via the expansion
   $$(x,(1-\tau A)^{-1} y)\det (1-\tau A)^{-1}=\sum_{j\geq 0}{\alpha_j(y,x)(A)\tau^j},\ \ \ A\in \gl_n.$$

 Let us define the \emph{length} of such $\z$ by $l(\z):=\min\{m\in \ZZ_{\geq -1}|\ \z_{\geq m+1}=0\}$.

\begin{ex}\label{basic1} \cite[Example 4.7]{EGG}
  If $l(\z)=1$ then $H_\z(\gl_n)\cong \U(\ssl_{n+1})$. Thus, for an arbitrary $\z$, we can regard $H_\z(\gl_n)$ as a deformation of $\U(\ssl_{n+1})$.
\end{ex}

  One interesting problem is to find deformation parameters $\z$ and $\z'$ of the above form with $H_\z(\gl_n)\simeq H_{\z'}(\gl_n)$.
 Even for $n=1$ (when $H_\z(\gl_1)$ are simply the \emph{generalized Weyl algebras}),
 the answer to this question (given in~\cite{BJ}) is quite nontrivial.
 % An interesting question to ask is for which deformation parameters $\z$ and $\z'$ of the above form, the corresponding algebras $H_\z(\gl_n)$
% and $H_{\z'}(\gl_n)$ are isomorphic. Even in the case $n=1$ (of the so-called \emph{generalized Weyl algebras}) the answer to the question is quite
 %nontrivial and is due to~\cite{BJ}.
  Instead, we will look only for the filtration preserving isomorphisms, where both algebras are endowed with the \emph{$N$-th standard filtration}
 $\{\F^{(N)}_{\bullet}\}$. Those are induced from the grading on $T(\gl_n\oplus V_n\oplus V_n^*)$ with $\deg(\gl_n)=2$ and
 $\deg(V_n\oplus V_n^*)=N$, where $N>l(\z)$.
  For $N\geq \max\{l(\z)+1,l(\z')+1,3\}$ we have the following result (see Appendix A for a proof):

\begin{lem}\label{less_parameters}

 (a) $N$-standardly filtered algebras $H_{\z}(\mathfrak{gl}_n)$ and $H_{\z'}(\mathfrak{gl}_n)$ are isomorphic if and only if there exist
 $\lambda\in \CC, \theta\in \CC^*, s\in \{\pm\}$ such that $\z'=\theta\varphi_\lambda(\z^{s})$, where

$\bullet$
 $\varphi_\lambda:\U(\gl_n)\overset{\sim}\longrightarrow \U(\gl_n)$ is an isomorphism defined by $\varphi_\lambda(A)=A+\lambda\cdot \tr A$ for any $A\in \gl_n$,

$\bullet$
 for $\z=\z_0r_0+\z_1r_1+\z_2r_2+\ldots$ we define $\z^{-}:=\z_0r_0-\z_1r_1+\z_2r_2-\ldots,\ \z^{+}:=\z$.

\medskip
\noindent
 (b) For any length $m$ deformation $\z$, there is a length $m$ deformation $\z'$ with $\z'_m=1,\ \z'_{m-1}=0$, such that algebras
 $H_{\z}(\mathfrak{gl}_n)$ and $H_{\z'}(\mathfrak{gl}_n)$ are isomorphic as filtered algebras.
\end{lem}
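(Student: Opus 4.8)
The plan is to deduce everything from three explicit families of $N$-filtered isomorphisms. For the ``if'' part of (a) I would build: (i) the rescaling $y\mapsto\theta^{-1}y$ (fixing $\gl_n$ and $V_n^*$), which gives $H_\z(\gl_n)\iso H_{\theta\z}(\gl_n)$; (ii) the map equal to $\varphi_\lambda$ on $\gl_n$ and to the identity on $V_n\oplus V_n^*$, which gives $H_\z(\gl_n)\iso H_{\varphi_\lambda\circ\z}(\gl_n)$ --- the only relations needing a check are $[\varphi_\lambda(A),y]=\varphi_\lambda(A)\cdot y$, and these hold because $\varphi_\lambda(A)-A\in\CC\cdot1$ is central, so $[\varphi_\lambda(A),y]=[A,y]=A\cdot y$; the target is isomorphic to a PBW algebra, hence again satisfies the PBW property, so $\varphi_\lambda\circ\z$ is of the admissible form; (iii) the Chevalley automorphism $A\mapsto-A^{\mathsf T}$ of $\gl_n$ together with the interchange $V_n\leftrightarrow V_n^*$, which gives $H_\z(\gl_n)\iso H_{\z^-}(\gl_n)$ by means of the identity $\alpha_j(y,x)(-A^{\mathsf T})=(-1)^j\alpha_j(x,y)(A)$, a one-line consequence of the generating function defining the $\alpha_j$. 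Composing (i)--(iii) proves that $\z'=\theta\varphi_\lambda(\z^s)$ implies $H_\z(\gl_n)\cong H_{\z'}(\gl_n)$.

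For the converse, suppose $\Phi\colon H_\z(\gl_n)\iso H_{\z'}(\gl_n)$ is an $N$-filtered isomorphism. Since $N>l(\z),l(\z')$ and $N\geq3$, in the $N$-filtration the relations $[y,x]=\z(y,x)$ and $[A,y]=A\cdot y$ become trivial in the associated graded, so $\gr H_\z(\gl_n)\cong\U(\gl_n)\otimes S(V_n\oplus V_n^*)$ with commuting factors, $\deg\gl_n=2$, $\deg(V_n\oplus V_n^*)=N$. Hence $\Phi$ preserves the subalgebra $\U(\gl_n)$ generated in filtration degree $2$, and $\Phi|_{\gl_n}=\varphi_\lambda\circ\bar\tau$ for some $\lambda\in\CC$ and $\bar\tau\in\Aut(\gl_n)$. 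The device I would use next is the grading of $H_\z(\gl_n)$ by $\ad(I)$-eigenvalues: these are integers, $\gl_n$ sits in weight $0$, $V_n$ in weight $1$, $V_n^*$ in weight $-1$, and PBW monomials are weight vectors. Since $\bar\tau$ preserves $Z(\gl_n)=\CC I$ we have $\bar\tau(I)=\epsilon I$, and $[I,y]=y$ forces $\Phi(y)$ to be an $\ad(I)$-eigenvector of weight $\epsilon^{-1}$; running the same argument for $\Phi^{-1}$ gives $\epsilon=\pm1$, so $\bar\tau$ is inner, possibly composed with the Chevalley involution. Composing $\Phi$ with the isomorphism (iii) (replacing $\z'$ by $(\z')^-$), then an inner automorphism of $H_{\z'}(\gl_n)$ (which leaves $\z'$ unchanged, as $\z'$ is $\mathrm{GL}_n$-equivariant), then an isomorphism (ii), one reduces to $\Phi|_{\gl_n}=\mathrm{id}$.

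In the reduced case $\Phi$ fixes $\U(\gl_n)$ pointwise, and since $V_n$ is exactly the weight-$1$ part of $\F^{(N)}_N$ (the $\gl_n$-part has weight $0$ and $V_n^*$ has weight $-1$), $\Phi(V_n)\subseteq V_n$ and $\Phi(V_n^*)\subseteq V_n^*$; the relation $[A,y]=A\cdot y$ then makes $\Phi|_{V_n}$ a $\gl_n$-module map, hence a scalar by Schur's lemma --- and since $V_n$ is $\gl_n$-irreducible for every $n$, no small-rank exceptions occur --- and likewise on $V_n^*$. A last rescaling (i) brings $\Phi$ to the identity on all generators, so $\z'=\z$ in this reduced form; unwinding the three kinds of compositions then gives $\z'=\theta\varphi_\lambda(\z^s)$, completing (a). For (b), by the ``if'' part it is enough to pick $\theta\in\CC^*$ and $\lambda\in\CC$ so that $\theta\varphi_\lambda(\z)$ has leading coefficient $1$ and subleading coefficient $0$. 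From $\gr\varphi_\lambda=\mathrm{id}$ and the PBW property one obtains $\varphi_\lambda(r_j)=r_j+\lambda c_j r_{j-1}+(\text{lower order in the }r_i)$ with $c_j\neq0$ for $j\geq1$ (because $\alpha_j$ genuinely involves $\tr A$, so the derivation $A\mapsto\tr A$ of $\U(\gl_n)$ does not annihilate $r_j$); hence the $r_{m-1}$-coefficient of $\varphi_\lambda(\z)$ is the nonconstant affine function $\z_{m-1}+\lambda c_m\z_m$ of $\lambda$, which one makes vanish, and then one sets $\theta=\z_m^{-1}$.

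The hard part will be the rigidity step in the ``only if'' direction: the graded algebra $\U(\gl_n)\otimes S(V_n\oplus V_n^*)$ has a very large automorphism group, and one must show that after composing with the three permitted isomorphisms only the identity survives. The $\ad(I)$-weight grading is what keeps this under control, because it localizes $\Phi(V_n)$ inside $V_n$ with no correction terms and reduces the recovery of $\z'$ to one application of Schur's lemma. The two auxiliary facts invoked above --- the transformation of $\alpha_j$ under $A\mapsto-A^{\mathsf T}$ and the triangularity of $\varphi_\lambda$ on the $r_j$ --- are routine generating-function manipulations that I would relegate to the appendix.
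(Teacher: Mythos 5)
Your plan follows the paper's proof quite closely: the ``if'' direction is established via the same three families of explicit isomorphisms (rescaling, $\varphi_\lambda$-twist, Chevalley/duality swap), the ``only if'' direction proceeds by restricting $\Phi$ to $\F^{(N)}_2=\CC\oplus\gl_n$ and then exploiting the $\ad(I_n)$-eigenvalue decomposition of $\F^{(N)}_N$, and part (b) uses the triangularity of $\varphi_\lambda$ on the $r_j$'s.

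There is, however, a real gap in the ``only if'' direction, exactly at the point you flag as the hard rigidity step. From $\bar\tau(I_n)=\epsilon I_n$ and the eigenvalue argument you correctly get $\epsilon=\pm1$, but the inference ``so $\bar\tau$ is inner, possibly composed with the Chevalley involution'' does not follow from this alone when $n\ge 3$. For $n\ge 3$ one has $\Aut(\gl_n)\cong\Aut(\ssl_n)\times\Aut(\CC I_n)=(\mu_2\ltimes \mathrm{PGL}_n)\times\CC^*$, and the $\mu_2$-component and the $\CC^*$-component are \emph{independent}: knowing that the center is scaled by $\pm1$ says nothing about the $\mu_2$-factor on $\ssl_n$. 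In particular the ``mixed'' automorphisms (Chevalley on $\ssl_n$ but identity on $\CC I_n$, or the reverse) satisfy $\epsilon=\pm1$ and yet are not of the form ``inner composed with the Chevalley involution of $\gl_n$,'' so they are not killed by your composition with (iii), inner, (ii). To rule them out one must use the representation-theoretic observation that the paper makes: if, say, $\epsilon=1$, then $\Phi(V_n)\subseteq V_n$ sits in weight $1$, and it carries the $\ssl_n$-module structure $V_n^{\bar\tau}$, so $\bar\tau|_{\ssl_n}$ cannot be a nontrivial outer automorphism because $V_n^\sigma\cong V_n^*\not\cong V_n$ for $n\ge 3$ (and symmetrically when $\epsilon=-1$). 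Only after this step are you entitled to reduce to $\Phi|_{\gl_n}=\mathrm{id}$; your later appeal to Schur's lemma is applied \emph{after} that reduction and therefore cannot fill the hole. (For $n\le 2$ the Chevalley involution of $\ssl_n$ is inner and the mixed cases collapse, which is why the paper notes there is no $\mu_2$-factor in those ranks.)

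Two smaller points. First, your claim that under the $N$-filtration the relation $[y,x]=\z(y,x)$ always dies in $\gr$ fails at the boundary $N=l(\z)+1$ (which the hypotheses allow): there $\deg(\z_{l(\z)}r_{l(\z)})=2N$ and the top piece survives as a Poisson bracket. This does not hurt the argument, since all you use is that $\Phi$ preserves $\F^{(N)}_2=\CC\oplus\gl_n$, but the sentence as stated is wrong. Second, in part (b) the assertion $\varphi_\lambda(r_j)=r_j+\lambda c_j r_{j-1}+\text{lower}$ with $c_j\ne0$ needs the generating-function identity $\sum\alpha_i(A+sI_n)\tau^i=(1-s\tau)^{-n-1}\sum\alpha_i(A)(\tau(1-s\tau)^{-1})^i$, which gives $c_j=n+j$; ``$\alpha_j$ genuinely involves $\tr A$'' shows $D(r_j)\ne0$ but does not by itself show that $D(r_j)$ is a multiple of $r_{j-1}$ modulo lower order, so this should not be dismissed as entirely routine.
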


\subsection{Infinitesimal Cherednik algebras of $\spn_{2n}$}

  Let $V_{2n}$ be the standard $2n$-dimensional representation of $\spn_{2n}$ with a symplectic form $\omega$. Given any $\spn_{2n}$-invariant
 pairing $\z: V_{2n}\times V_{2n} \rightarrow \U(\spn_{2n})$ we define an algebra
   $H_\z(\spn_{2n}):=\U(\spn_{2n})\ltimes T(V_{2n})/([x,y]-\z(x,y) |x,y \in V_{2n})$.
 It has a filtration induced from the grading $\deg (\spn_{2n})=0,\ \deg(V_{2n})=1$ on $T(\spn_{2n}\oplus V_{2n})$.

\medskip
\begin{defn}
 Algebra $H_\z(\spn_{2n})$ is referred to as the \emph{infinitesimal Cherednik algebra of $\spn_{2n}$} if it satisfies the \emph{PBW property}:
 $\U(\spn_{2n})\ltimes S(V_{2n}) \overset{\sim}\longrightarrow \mathrm{gr} H_\z(\spn_{2n})$.
\end{defn}

\medskip
  It was shown in~\cite[Theorem 4.2]{EGG}, that $H_\z(\spn_{2n})$ satisfies the PBW property if and only if $\z=\sum_{j=0}^k \z_jr_{2j}$
 for some nonnegative integer $k$ and $\z_j\in \CC$, where $r_{2j}(x,y)\in \U(\spn_{2n})$ is the symmetrization of
 $\beta_{2j}(x,y)\in S(\spn_{2n})\simeq \CC[\spn_{2n}]$ and $\beta_{2j}(x,y)$ is defined via the expansion
   $$\omega(x,(1-\tau^2 A^2)^{-1} y) \det(1-\tau A)^{-1}=\sum_{j\geq 0}{\beta_{2j}(x,y)(A)\tau^{2j}},\ \ \ A\in \spn_{2n}.$$

\noindent
 Similarly to the $\gl_n$-case, we define the \emph{length} of such $\z$ by $l(\z):=\min\{m\in \ZZ_{\geq -1}|\ \z_{\geq m+1}=0\}$.

\begin{ex} \label{basic2} \cite[Example 4.11]{EGG}
  For $\z_0\ne 0$ we have $H_{\z_0r_0}(\spn_{2n})\cong \U(\spn_{2n})\ltimes W_n$, where $W_n$ is the $n$-th Weyl algebra.
 Thus, $H_\z (\spn_{2n})$ can be regarded as a deformation of $\U(\spn_{2n})\ltimes W_n$.
\end{ex}

  For any $N> 2l(\z)$, we introduce the $N$-th standard filtration $\{\F^{(N)}_{\bullet}\}$ on $H_\z (\spn_{2n})$ by
 setting $\deg(\spn_{2n})=2,\ \deg(V_{2n})=N$. The following result is analogous to Lemma~\ref{less_parameters}:

\begin{lem}\label{less_parameters2}
  For $N\geq \max\{2l(\z)+1,2l(\z')+1,3\}$, the $N$-standardly filtered algebras $H_\z(\spn_{2n})$ and $H_{\z'}(\spn_{2n})$ are isomorphic
 if and only if there exists $\theta\in \CC^*$ such that $\z'=\theta \z$.
\end{lem}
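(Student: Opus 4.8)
The plan is to run the same kind of argument as for Lemma~\ref{less_parameters}, taking advantage of the fact that $\spn_{2n}$ is simple --- so there is no analogue of the trace twist $\varphi_\lambda$ --- and that its standard module $V_{2n}$ is ``odd'' for the centre of $Sp_{2n}$. For the ``if'' direction, suppose $\z'=\theta\z$ with $\theta\in\CC^*$ and pick $c\in\CC^*$ with $c^2=\theta^{-1}$, possible since $\CC$ is algebraically closed. Sending $A\mapsto A$ for $A\in\spn_{2n}$ and $x\mapsto cx$ for $x\in V_{2n}$ defines a graded algebra homomorphism $\U(\spn_{2n})\ltimes T(V_{2n})\to H_{\z'}(\spn_{2n})$ that annihilates the relations defining $H_\z(\spn_{2n})$: indeed, in $H_{\z'}(\spn_{2n})$ one has $[cx,cy]-\z(x,y)=c^2\z'(x,y)-\z(x,y)=(c^2\theta-1)\z(x,y)=0$, where $\z(x,y)\in\U(\spn_{2n})$ is left fixed. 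The same recipe with $c^{-1}$ yields an inverse, so we get a graded, in particular $N$-standardly filtered, isomorphism $H_\z(\spn_{2n})\iso H_{\z'}(\spn_{2n})$; here $l(\z')=l(\z)$, so the hypothesis on $N$ amounts to $N\geq\max\{2l(\z)+1,3\}$, which is exactly what is needed for both filtrations to be defined.

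For the converse, let $\phi\colon H_\z(\spn_{2n})\to H_{\z'}(\spn_{2n})$ be an $N$-standardly filtered isomorphism. Since $N\geq 3$, the PBW property gives $\F^{(N)}_2=\CC\oplus\spn_{2n}$ in both algebras, so $\phi(\spn_{2n})\subseteq\CC\oplus\spn_{2n}$; write $\phi(A)=A'+\lambda(A)$ with $A'\in\spn_{2n}$ and $\lambda(A)\in\CC$. Comparing the $\spn_{2n}$- and $\CC$-components of $\phi([A,B])=[\phi(A),\phi(B)]=[A',B']$ shows that $A\mapsto A'$ is a Lie algebra homomorphism $\spn_{2n}\to\spn_{2n}$ and that $\lambda$ vanishes on $[\spn_{2n},\spn_{2n}]=\spn_{2n}$; injectivity of $\phi$ (together with $\spn_{2n}\cap\CC\cdot 1=0$) makes $A\mapsto A'$ injective, hence an automorphism of the simple Lie algebra $\spn_{2n}$, hence inner, as the Dynkin type $C_n$ admits no diagram automorphism. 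Conjugating $\phi$ by the corresponding element of $Sp_{2n}$ --- such conjugations preserve $\z'$ and are graded automorphisms of $H_{\z'}(\spn_{2n})$ --- and using $\lambda|_{\spn_{2n}}=0$, we may assume $\phi|_{\spn_{2n}}=\Id$, and therefore $\phi|_{\U(\spn_{2n})}=\Id$.

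Now $\phi|_{V_{2n}}\colon V_{2n}\to\F^{(N)}_N H_{\z'}(\spn_{2n})$ is $\spn_{2n}$-equivariant, since $\phi(A\cdot x)=\phi([A,x])=[\phi(A),\phi(x)]=[A,\phi(x)]=A\cdot\phi(x)$. By PBW, $\F^{(N)}_N H_{\z'}(\spn_{2n})$ carries a filtration with associated graded $\bigoplus_{2j+Nk\leq N}S^j(\spn_{2n})\otimes S^k(V_{2n})$ as an $\spn_{2n}$-module; since $N\geq 3$ the only summands are $S^j(\spn_{2n})$ for $j\leq N/2$ together with a single copy of $V_{2n}$. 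As $-\Id\in Sp_{2n}$ acts trivially on $\spn_{2n}\cong S^2V_{2n}$, hence on every $S^j(\spn_{2n})$, but by $-1$ on $V_{2n}$, the module $V_{2n}$ does not occur in any $S^j(\spn_{2n})$; therefore $\operatorname{Hom}_{\spn_{2n}}\bigl(V_{2n},\F^{(N)}_N H_{\z'}(\spn_{2n})\bigr)$ is one-dimensional, spanned by the inclusion, so $\phi|_{V_{2n}}=c\cdot(\text{inclusion})$ for some $c\in\CC^*$ (nonzero by injectivity). Applying $\phi$ to $[x,y]=\z(x,y)$ then gives $c^2\z'(x,y)=\z(x,y)$, i.e.\ $\z'=c^{-2}\z$, finishing the proof with $\theta=c^{-2}$.

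I expect the only genuine work to be in the converse direction, and its crux to be the two representation-theoretic inputs: that $\F^{(N)}_2$ is small enough (using $N\geq 3$) to force $\phi$ to act on $\spn_{2n}$ by an inner automorphism, and the parity obstruction $V_{2n}\not\hookrightarrow\U(\spn_{2n})$ coming from the centre of $Sp_{2n}$, which pins $\phi|_{V_{2n}}$ down to a scalar. Everything else --- the explicit isomorphism in the ``if'' direction and the bookkeeping of PBW filtrations --- is routine.
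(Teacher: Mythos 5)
Your proof is correct and follows essentially the same route as the paper's proof in Appendix A. The only cosmetic difference is the device used for the parity argument: you invoke the central element $-\Id\in Sp_{2n}$ (which acts by $-1$ on $V_{2n}$ and trivially on every $S^j(\spn_{2n})$), whereas the paper uses $\ad(I'_n)$, which has eigenvalues $\pm 1$ on $V_{2n}$ and only even eigenvalues on $\U(\spn_{2n})$; these two observations are equivalent and both pin down $\phi|_{V_{2n}}$ to a scalar via Schur's lemma.
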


\subsection{Universal algebras $H_m(\gl_n)$ and $H_m(\spn_{2n})$}

  It is natural to consider a version of those algebras with $\z_j$ being independent central variables.
 This motivates the following notion of the universal length $m$ infinitesimal Cherednik algebras.

\medskip
\begin{defn}
 The \emph{universal length $m$ infinitesimal Cherednik algebra} $H_m(\gl_n)$ is the
 quotient of $\U(\gl_n)\ltimes T(V_n\oplus V_n^*)[\z_0,\ldots,\z_{m-2}]$ by the relations
  $$[x,x']=0,\ [y,y']=0,\ [A,x]=A(x),\ [A,y]=A(y),\ [y,x]=\sum_{j=0}^{m-2} \z_j r_j(y,x)+r_m(y,x),$$
 where $x,x'\in V_n^*,\ y,y'\in V_n,\ A\in \gl_n$ and $\{\z_j\}_{j=0}^{m-2}$ are central. The filtration is induced from the grading on
 $T(\gl_n\oplus V_n\oplus V_n^*)[\z_0,\ldots,\z_{m-2}]$ with $\deg(\gl_n)=2,\ \deg(V_n\oplus V_n^*)=m+1,\  \deg(\z_i)=2(m-i)$
 (the latter is chosen in such a way that $\deg(\z_jr_j)=2m$ for all $j$).
\end{defn}
\medskip

  Algebra $H_m(\gl_n)$ is free over $\CC[\z_0,\ldots,\z_{m-2}]$ and $H_m(\gl_n)/(\z_0-c_0,\ldots,\z_{m-2}-c_{m-2})$ is the usual
 infinitesimal Cherednik algebra $H_{\z_c}(\gl_n)$ with $\z_c=c_0r_0+\ldots+c_{m-2}r_{m-2}+r_m$. In fact, for odd $m$,
 $H_m(\gl_n)$ can be viewed as a universal family of length $m$ infinitesimal Cherednik algebras of $\gl_n$, while for even
 $m$, there is an action of $\ZZ/2\ZZ$ we should quotient by.
  \footnote{\ This follows from our proof of Lemma~\ref{less_parameters}.}

\begin{rem}\label{PBW_updated}
  One can consider all possible quotients
      $$\U(\gl_n)\ltimes T(V_n\oplus V_n^*)[\z_0,\ldots,\z_{m-2}]/([x,x'], [y,y'],[A,x]-A(x), [A,y]-A(y), [y,x]-\eta(y,x)),$$
 with a $\gl_n$-invariant pairing $\eta:V_n\times V_n^*\rightarrow \U(\gl_n)[\z_0,\ldots,\z_{m-2}]$, such that $\deg(\eta(y,x))\leq 2m$.
  Such a quotient satisfies a PBW property if and only if $\eta(y,x)=\sum_{i=0}^{m}{\eta_i(\z_0,\ldots,\z_{m-2})r_i(y,x)}$
 with $\deg(\eta_i(\z_0,\ldots,\z_{m-2}))\leq 2(m-i)$ (this is completely analogous to~\cite[Theorem 4.2]{EGG}).
\end{rem}

 We define the universal version of $H_\z(\spn_{2n})$ in a similar way:

\medskip
\begin{defn}
  The \emph{universal length $m$ infinitesimal Cherednik algebra} $H_m(\spn_{2n})$ is defined as
     $$H_m(\spn_{2n}):=\U(\spn_{2n})\ltimes T(V_{2n})[\z_0,\ldots,\z_{m-1}]/([A,x]-A(x), [x,y]-\sum_{j=0}^{m-1} \z_j r_{2j}(x,y)-r_{2m}(x,y)),$$
 where $A\in \spn_{2n},\ x,y\in V_{2n}$ and $\{\z_i\}_{i=0}^{m-1}$ are central. The filtration is induced from the grading on
 $T(\spn_{2n}\oplus V_{2n})[\z_0,\ldots,\z_{m-1}]$ with $\deg(\spn_{2n})=2,\ \deg(V_{2n})=2m+1$ and $\deg(\z_i)=4(m-i)$.
\end{defn}

\medskip

  The algebra $H_m(\spn_{2n})$ is free over $\CC[\z_0,\ldots,\z_{m-1}]$ and $H_m(\spn_{2n})\slash(\z_0-c_0,\ldots,\z_{m-1}-c_{m-1})$ is the
 usual infinitesimal Cherednik algebra $H_{\z_c}(\spn_{2n})$ for $\z_c=c_0r_0+\ldots+c_{m-1}r_{2(m-1)}+r_{2m}$.
 In fact, the algebra $H_m(\spn_{2n})$ can be viewed as a universal family of length $m$ infinitesimal Cherednik algebras of $\spn_{2n}$,
 due to Lemma~\ref{less_parameters2}.

\begin{rem}\label{PBW_updated2}
  Analogously to Remark~\ref{PBW_updated}, the result of~\cite[Theorem 4.2]{EGG}, generalizes straightforwardly to the case of
 $\spn_{2n}$-invariant pairings $\eta:V_{2n}\times V_{2n}\rightarrow \U(\spn_{2n})[\z_0,\ldots,\z_{m-1}]$.
%
%   One can consider all possible quotients
%      $$\U(\spn_{2n})\ltimes T(V_{2n})[\z_0,\ldots,\z_{m-1}]/([A,x]-A(x), [y,x]-\eta(y,x)),$$
% with an $\spn_{2n}$-invariant pairing $\eta:V_{2n}\times V_{2n}\rightarrow \U(\spn_{2n})[\z_0,\ldots,\z_{m-1}]$, such that
% $\deg(\eta(y,x))\leq 4m$. Then such a quotient satisfies a PBW property iff $\eta(y,x)=\sum_{i=0}^{m}{\eta_i(\z_0,\ldots,\z_{m-1})r_{2i}(y,x)}$
% with $\deg(\eta_i(\z_0,\ldots,\z_{m-1}))\leq 4(m-i)$ (this is completely analogous to~\cite{EGG}, Theorem 4.2).
\end{rem}

\subsection{Poisson counterparts of $H_\z(\g)$ and $H_m(\g)$}
\label{Poisson counterparts}
 Following~\cite{DT}, we introduce the Poisson algebras $H_m^{\cl}(\g)$ for $\g$ being $\gl_n$ or $\spn_{2n}$.

  As algebras these are $S(\gl_n\oplus V_n\oplus V_n^*)[\z_0,\ldots,\z_{m-2}]$ (respectively $S(\spn_{2n}\oplus V_{2n})[\z_0,\ldots,\z_{m-1}]$)
 with a Poisson bracket $\{\cdot,\cdot\}$ modeled after the commutator $[\cdot,\cdot]$ from the definition of $H_m(\g)$, so
 that $\{y,x\}=\alpha_m(y,x)+\sum_{j=0}^{m-2}{\z_j\alpha_j(y,x)}$ (respectively $\{x,y\}=\beta_{2m}(x,y)+\sum_{j=0}^{m-1}{\z_j\beta_{2j}(x,y)}$).
  Their quotients $H_m^{\cl}(\gl_n)/(\z_0-c_0,\ldots,\z_{m-2}-c_{m-2})$ and $H_m^{\cl}(\spn_{2n})/(\z_0-c_0,\ldots,\z_{m-1}-c_{m-1})$,
 are the Poisson infinitesimal Cherednik algebras $H_{\z_c}^{\cl}(\gl_n)$ ($\z_c=c_0\alpha_0+\ldots+c_{m-2}\alpha_{m-2}+\alpha_m$) and
 $H_{\z_c}^{\cl}(\spn_{2n})$ ($\z_c=c_0\beta_0+\ldots+c_{m-1}\beta_{2m-2}+\beta_{2m}$) from~\cite[Sections 5 and 7]{DT} respectively.

 Let us describe the Poisson centers of the algebras $H_m^{\cl}(\gl_n)$ and $H_m^{\cl}(\spn_{2n})$.

\noindent
  For $\g=\gl_n$ and $1\leq k\leq n$ we define an element $\tau_k\in H_m^{\cl}(\g)$ by $\tau_k:=\sum_{i=1}^n x_i\{\wt{Q}_k,y_i\}$,
 where $1+\sum_{j=1}^n{\wt{Q}_jz^j}=\det(1+zA)$. We set $\z(w):=\sum_{i=0}^{m-2}\z_iw^i+w^m$ and define $c_i\in S(\gl_n)$ via
 $$c(t)=1+\sum_{i=1}^n(-1)^ic_it^i:=\mathrm{Res}_{z=0}\z(z^{-1})\frac{\det(1-tA)}{\det(1-zA)}\frac{z^{-1}dz}{1-t^{-1}z}.$$

\noindent
  For $\g=\spn_{2n}$ and $1\leq k\leq n$ we define an element $\tau_k\in H_m^{\cl}(\g)$ by $\tau_k:=\sum_{i=1}^{2n}\{\wt{Q}_k,y_i\}y_i^*$,
 where $1+\sum_{j=1}^n{\wt{Q}_jz^{2j}}=\det(1+zA)$, while $\{y_i\}_{i=1}^{2n}$ and $\{y_i^*\}_{i=1}^{2n}$ are the dual bases of $V_{2n}$, that is $\omega(y_i,y_j^*)$=1.
 We set $\z(w):=\sum_{i=0}^{m-1}\z_iw^i+w^m$ and define $c_i\in S(\spn_{2n})$ via
 $$c(t)=1+\sum_{i=1}^n
 c_it^{2i}:=2\mathrm{Res}_{z=0}\z(z^{-2})\frac{\det(1-tA)}{\det(1-zA)}\frac{z^{-1}dz}{1-t^{-2}z^2}.$$
  The following result is a straightforward generalization of~\cite[Theorems 5.1 and 7.1]{DT}:

\begin{thm}\label{Poisson-center}
 Let $\zz_{\mathrm{Pois}}(A)$ denote the Poisson center of the Poisson algebra $A$. We have:

 (a) $\zz_{\mathrm{Pois}}(H_m^{\cl}(\gl_n))$ is a polynomial algebra in free generators $\z_0,\ldots,\z_{m-2},\tau_1+c_1,\ldots,\tau_n+c_n$;

 (b) $\zz_{\mathrm{Pois}}(H_m^{\cl}(\spn_{2n}))$ is a polynomial algebra in free generators $\z_0,\ldots,\z_{m-1}, \tau_1+c_1,\ldots,\tau_n+c_n$.
\end{thm}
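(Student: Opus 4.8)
The plan is to deduce this from the corresponding results of \cite{DT} about Poisson infinitesimal Cherednik algebras with a \emph{fixed} parameter, by a degeneration argument in which the central indeterminates $\z_j$ become inert. I spell out part (a); part (b) is completely parallel with the evident changes.

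First I would observe that the proposed generators are Poisson-central: the $\z_0,\ldots,\z_{m-2}$ by construction, and each $\tau_k+c_k$ by exactly the computation that proves \cite[Theorem 5.1]{DT}. That computation uses only the parameter-free identities $\{A,y_i\}=A(y_i)$, $\{A,x_i\}=A(x_i)$ for $A\in\gl_n$ together with the defining formula for $\{y,x\}$, and therefore goes through verbatim once the scalars $c_j$ of loc.\,cit. are replaced by the central variables $\z_j$; the same input shows that $\tau_1+c_1,\ldots,\tau_n+c_n$ are algebraically independent over $R:=\CC[\z_0,\ldots,\z_{m-2}]$. This produces a polynomial subalgebra $A_0:=R[\tau_1+c_1,\ldots,\tau_n+c_n]\subseteq\zz_{\mathrm{Pois}}(H_m^{\cl}(\gl_n))$. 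I expect this borrowed verification to be the one genuinely laborious ingredient.

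For the reverse inclusion I would equip $H_m^{\cl}(\gl_n)=S(\gl_n\oplus V_n\oplus V_n^*)[\z_0,\ldots,\z_{m-2}]$ with the filtration coming from the grading $\dg(\gl_n)=2$, $\dg(V_n\oplus V_n^*)=m+1$, $\dg(\z_j)=0$. Since $\alpha_m(y,x)\in S^m(\gl_n)$ has degree $2m$ while each $\z_j\alpha_j(y,x)$ has degree $2j\le 2m-4$, this turns $H_m^{\cl}(\gl_n)$ into a filtered Poisson algebra with bracket of degree $-2$, and the associated graded Poisson algebra is $\gr H_m^{\cl}(\gl_n)\cong H_{\alpha_m}^{\cl}(\gl_n)\otimes_{\CC}R$, where $H_{\alpha_m}^{\cl}(\gl_n)$ is the Poisson infinitesimal Cherednik algebra of \cite[Section 5]{DT} with parameter $\z_c=\alpha_m$ (i.e. $c_0=\cdots=c_{m-2}=0$) and the $\z_j$ are Poisson-central. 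Because $\{\wt Q_k,y_i\}$ does not involve the $\z_j$ and the $\z$-free part of $c_k$ is precisely the element ``$c_k$'' attached to the parameter $\alpha_m$, the symbol of $\tau_k+c_k$ in $\gr H_m^{\cl}(\gl_n)$ is the $k$-th Poisson-central generator of $H_{\alpha_m}^{\cl}(\gl_n)$ supplied by \cite[Theorem 5.1]{DT}; hence $\gr A_0=\zz_{\mathrm{Pois}}(H_{\alpha_m}^{\cl}(\gl_n))\otimes_\CC R$. Since a polynomial in the central variables $\z_j$ is Poisson-central if and only if all of its coefficients are, the same identity computes $\zz_{\mathrm{Pois}}(\gr H_m^{\cl}(\gl_n))$. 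Taking symbols of Poisson-central elements now gives $\gr A_0\subseteq\gr\,\zz_{\mathrm{Pois}}(H_m^{\cl}(\gl_n))\subseteq\zz_{\mathrm{Pois}}(\gr H_m^{\cl}(\gl_n))=\gr A_0$, so all three coincide; as the filtration is bounded below and exhaustive, I conclude $\zz_{\mathrm{Pois}}(H_m^{\cl}(\gl_n))=A_0$.

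For $\spn_{2n}$ I would run the same argument with the filtration $\dg(\spn_{2n})=2$, $\dg(V_{2n})=2m+1$, $\dg(\z_j)=0$ (so that $\z_j\beta_{2j}$ has degree $4j\le 4m-4<4m=\dg\beta_{2m}$), degenerating to $\gr H_m^{\cl}(\spn_{2n})\cong H_{\beta_{2m}}^{\cl}(\spn_{2n})\otimes_\CC\CC[\z_0,\ldots,\z_{m-1}]$, and invoke \cite[Theorem 7.1]{DT} in place of \cite[Theorem 5.1]{DT}. The main obstacle is thus the parameter-dependent Poisson-centrality (and algebraic-independence) computation for the $\tau_k+c_k$, inherited from \cite{DT}; once that is available, the degeneration upgrades it to the full description of the Poisson center with essentially no extra work.
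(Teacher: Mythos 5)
Your proposal is correct, and it supplies a proof where the paper gives none: Theorem~\ref{Poisson-center} is simply asserted to be ``a straightforward generalization of~\cite[Theorems 5.1 and 7.1]{DT},'' with no argument spelled out. Your filtration with $\deg(\z_j)=0$ does exactly what the phrase ``generalization'' demands: since each $\z_j\alpha_j$ (resp.\ $\z_j\beta_{2j}$) sits in strictly lower degree than $\alpha_m$ (resp.\ $\beta_{2m}$), the associated graded Poisson algebra is literally $H^{\cl}_{\alpha_m}(\gl_n)\otimes_\CC R$ (resp.\ $H^{\cl}_{\beta_{2m}}(\spn_{2n})\otimes_\CC R$), whose Poisson center is known by~\cite{DT} plus the elementary observation that adjoining Poisson-central polynomial variables tensors the center by $R$; and the check that $\gr(\tau_k+c_k)$ is the $k$-th generator of~\cite[Theorem 5.1 or 7.1]{DT} with parameter $\alpha_m$ (resp.\ $\beta_{2m}$) follows from the linearity of the residue formula for $c_k$ in $\z(z^{-1})$ together with the $\z$-freeness of $\tau_k$. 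The sandwich $\gr A_0\subseteq\gr\,\zz_{\mathrm{Pois}}\subseteq\zz_{\mathrm{Pois}}(\gr)=\gr A_0$ with a bounded-below exhaustive filtration then closes the argument, and it also yields the algebraic independence over $R$ for free (so the first paragraph's independence claim, while true, is not needed). One small caveat worth keeping in mind if you write this up in full: the containment $\gr\,\zz_{\mathrm{Pois}}(H^{\cl}_m)\subseteq\zz_{\mathrm{Pois}}(\gr H^{\cl}_m)$ uses that every element of the associated graded is a symbol, which holds here because the filtration is by polynomial degree on an honest polynomial ring, so there is no subtlety — but it is the step that deserves a sentence.
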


\subsection{$W$-algebras}
 Here we recall finite $W$-algebras following~\cite{GG}.

  Let $\g$ be a finite dimensional simple Lie algebra over $\CC$ and $e\in\g$ be a nonzero nilpotent element.
 We identify $\g$ with $\g^*$ via the Killing form $(\ ,\ )$. Let $\chi$ be the element of $\g^*$ corresponding to $e$ and $\zz_{\chi}$
 be the stabilizer of $\chi$ in $\g$ (which is the same as the centralizer of $e$ in $\g$). Fix an $\mathfrak{sl}_2$-triple $(e,h,f)$ in $\g$.
 Then $\zz_\chi$ is $\ad(h)$--stable and the eigenvalues of $\ad(h)$ on $\zz_{\chi}$ are nonnegative integers.

  Consider the $\ad(h)$--weight grading on $\g=\bigoplus_{i\in\ZZ}\g(i)$, that is, $\g(i):=\{\xi\in\g| [h,\xi]=i\xi\}$.
 Equip $\g(-1)$ with the symplectic form $\omega_\chi(\xi,\eta):=\langle\chi,[\xi,\eta]\rangle$. Fix a Lagrangian subspace $l\subset \g(-1)$ and set
 $\m:=\bigoplus_{i\leq -2}\g(i)\oplus l\subset \g,\ \m':=\{\xi-\langle\chi,\xi\rangle,\xi\in\m\} \subset \U(\g)$.

\medskip
\begin{defn}\cite{P1,GG}
  By the $W$-algebra associated with $e$ (and $l$), we mean the algebra $\U(\g,e):=\left(\U(\g)/\U(\g)\m' \right)^{\ad \m}$
 with multiplication induced from $\U(\g)$.
\end{defn}
\medskip

  Let $\{F^{st}_{\bullet}\}$ denote the PBW filtration on $\U(\g)$, while $\U(\g)(i):=\{x\in \U(\g)| [h, x]=ix\}$.
 Define $F_k\U(\g)=\sum_{i+2j\leq k} (F^{st}_j \U(\g)\cap \U(\g)(i))$ and equip $\U(\g,e)$ with the induced filtration,
 denoted $\{F_{\bullet}\}$ and referred to as the {\it Kazhdan} filtration.

  One of the key results of~\cite{P1,GG} is a description of the associated graded algebra $\gr_{F_{\bullet}}\U(\g,e)$.
 Recall that the affine subspace $\s:=\chi+(\g/[\g,f])^*\subset\g^*$ is called the {\it Slodowy slice}.
 As an affine subspace of $\g$, the Slodowy slice $\s$ coincides with $e+\C$, where $\C=\Ker_{\g}\ad(f)$.
 So we can identify $\CC[\s]\cong \CC[\C]$ with the symmetric algebra $S(\zz_{\chi})$.
  According to~\cite[Section 3]{GG}, algebra $\CC[\s]$ inherits a Poisson structure from $\CC[\g^*]$ and is also graded with $\deg(\zz_{\chi}\cap \g(i))=i+2$.

\begin{thm}\label{gr}\cite[Theorem 4.1]{GG}
  The filtered algebra $\U(\g,e)$ does not depend on the choice of $l$ (up to a distinguished isomorphism) and
 $\gr_{F_{\bullet}} \U(\g,e)\cong \CC[\s]$ as graded Poisson algebras.
\end{thm}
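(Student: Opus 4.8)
The final statement to prove is Theorem~\ref{gr}, the Gan--Ginzburg description of the associated graded of the $W$-algebra. Here is my plan.

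\medskip

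The plan is to analyze the Kazhdan-filtered algebra $\U(\g,e)=(\U(\g)/\U(\g)\m')^{\ad\m}$ by first understanding its two building blocks. First I would study the left $\U(\g)$-module $Q:=\U(\g)/\U(\g)\m'$. The key observation is that $\m'$ is obtained from the subspace $\m\subset\g$ by a shift by the character $\chi$, which does not change the symbol in $\gr\U(\g)=S(\g)=\CC[\g^*]$; thus $\gr Q=S(\g)/S(\g)\m=\CC[\chi+\m^\perp]$ where $\m^\perp\subset\g^*$ is the annihilator. (Here one must be careful that the Kazhdan filtration, not the PBW filtration, is the relevant one, and check that passing to $\gr$ of the quotient is exact, i.e. $\gr(\U(\g)\m')=S(\g)\m$; this follows from a filtered freeness argument, writing $\U(\g)\m'$ using a PBW basis adapted to a complement of $\m$.) Next, by the $\ad(h)$-grading one checks $\m^\perp=\p+l^\perp$ where $\p=\bigoplus_{i\ge -1}\g(i)$ in the $\g\cong\g^*$ identification, so that $\chi+\m^\perp$ is an affine space and the natural projection $\chi+\m^\perp\to\s=\chi+(\g/[\g,f])^*$ (induced by restriction along $\zz_\chi\hookrightarrow\g$, using $\g=[\g,f]\oplus\zz_\chi$) realizes $\chi+\m^\perp$ as a trivial affine bundle over $\s$ with fiber a symplectic vector space coming from $\g(-1)/l$.

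\medskip

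The second step is to take $\ad\m$-invariants. On the level of associated graded, $\gr(\U(\g,e))=\gr((Q)^{\ad\m})$ and one wants this to equal $(\gr Q)^{\m}=\CC[\chi+\m^\perp]^{M}$, where $M$ is the unipotent group with Lie algebra $\m$ acting on $\chi+\m^\perp$. The inclusion $\gr((Q)^{\ad\m})\subseteq(\gr Q)^{\m}$ is automatic; the reverse inclusion — that every $\m$-invariant symbol lifts to an $\ad\m$-invariant element — is the technical heart and is where I expect the main obstacle to lie. The standard device (going back to Premet, and used in~\cite{GG}) is the Kazhdan--Kostant-type transversality: the $M$-action on $\chi+\m^\perp$ is free, and $\chi+\m^\perp\to\s$ is an $M$-equivariant (in fact $M$-torsor-like) map whose geometric quotient is exactly $\s$; concretely one shows $\m^\perp=\m\oplus(\text{graph of a map to }\zz_\chi\text{-directions})$ type decomposition giving $\CC[\chi+\m^\perp]^M\cong\CC[\s]$, and this isomorphism of commutative algebras can be promoted to the filtered level because the $M$-action and the relevant splittings are compatible with the Kazhdan grading (all weights work out so that there are no obstructions to lifting — a dimension/degree count via the grading on $\zz_\chi$ with $\deg(\zz_\chi\cap\g(i))=i+2$). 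Independence of the choice of Lagrangian $l$ follows because different Lagrangians give canonically isomorphic $Q$'s (a Morita-type or direct argument comparing $l,l'$ through $l\cap l'$), with the isomorphism filtration-preserving.

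\medskip

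Finally I would check that the resulting identification $\gr_{F_\bullet}\U(\g,e)\cong\CC[\s]$ is not merely one of graded algebras but of graded \emph{Poisson} algebras: the Poisson bracket on $\gr_{F_\bullet}\U(\g,e)$ comes from commutators in $\U(\g)$ (with the Kazhdan degree shift), while the bracket on $\CC[\s]$ is the Gan--Ginzburg one inherited from the Lie--Poisson structure on $\CC[\g^*]$ via the classical Hamiltonian reduction $\CC[\chi+\m^\perp]^M$; these match because the whole construction $(\U(\g)/\U(\g)\m')^{\ad\m}$ is a quantum Hamiltonian reduction whose quasi-classical limit is precisely that classical reduction — so it suffices to trace through that the symbol map intertwines the two reductions degree by degree. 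The main obstacle, to repeat, is the surjectivity of symbols in the invariant-taking step; everything else is bookkeeping with the two gradings and the affine geometry of the Slodowy slice.
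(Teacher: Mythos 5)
This theorem is not proved in the paper; it is quoted verbatim from Gan--Ginzburg (the cited \cite[Theorem 4.1]{GG}), so there is no ``paper's own proof'' to compare against. Your sketch does reproduce the correct overall architecture of the Gan--Ginzburg argument: compute $\gr Q$ via a PBW/filtered-freeness argument to get $\CC[\chi+\m^\perp]$, observe that the $M$-action realizes $\s$ as the quotient, take $\ad\m$-invariants, and then match Poisson brackets as a quasi-classical limit of the quantum Hamiltonian reduction.

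However, the step you explicitly flag as ``the technical heart'' --- that every $\m$-invariant symbol lifts, i.e.\ $\gr\bigl((Q)^{\ad\m}\bigr)=(\gr Q)^{\m}$ --- is precisely the nontrivial content of the theorem, and your sketch does not supply an argument for it; ``all weights work out so that there are no obstructions'' is a placeholder, not a proof. In \cite{GG} this is handled by two ingredients you should make explicit. First, the geometric input is sharper than a ``trivial affine bundle'' statement: the adjoint action map $M\times\s\to\chi+\m^\perp$ is an \emph{isomorphism of affine varieties} (this is their Lemma~2.1, resting on the nondegeneracy of $\omega_\chi$ and the $\ad(h)$-grading), which is what makes $\chi+\m^\perp$ a free $M$-space with geometric quotient $\s$ and gives $\CC[\chi+\m^\perp]^M\cong\CC[\s]$ at the classical level. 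Second, the quantum lift is obtained not by an ad hoc degree count but by a BRST/Koszul argument: one forms the complex $Q\otimes\Lambda^\bullet\m$, shows that under the Kazhdan filtration its associated graded is the Koszul complex of the regular sequence cutting out $\chi+\m^\perp$ (so cohomology is concentrated in degree $0$ and equals $\CC[\s]$), and then invokes collapse of the resulting spectral sequence to transport this to $\U(\g,e)=H^0$. Without this flatness/acyclicity input, the surjectivity of the symbol map onto $\CC[\s]$ and the independence of $l$ are not established; the rest of your outline (including the Poisson compatibility as bookkeeping) is sound.
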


\subsection{Additional properties of $W$-algebras}
\label{W properties}
 We want to describe some other properties of $\U(\g,e)$.

\medskip
 (a) Let $G$ be the adjoint group of $\g$. There is a natural action of the group $Q:=Z_G(e,h,f)$ on $\U(\g,e)$, due to~\cite{GG}.
 Let $\q$ stand for the Lie algebra of $Q$. In~\cite{P2} Premet constructed a Lie algebra embedding $\q\overset{\iota}\hookrightarrow \U(\g,e)$.
 The adjoint action of $\q$ on $\U(\g,e)$ coincides with the differential of the aforementioned $Q$-action.

\medskip
 (b) Restricting the natural map $\U(\g)^{\ad \m}\longrightarrow \U(\g,e)$ to $Z(\U(\g))$, we get an algebra homomorphism
 $Z(\U(\g))\overset{\rho}\longrightarrow Z(\U(\g,e))$, where $Z(A)$ stands for the center of an algebra $A$.
 According to the following theorem, $\rho$ is an isomorphism:
\begin{thm}\label{center}
 (a) \cite[Section 6.2]{P1} The homomorphism $\rho$ is injective.

 (b) \cite[footnote to Question 5.1]{P2} The homomorphism $\rho$ is surjective.
\end{thm}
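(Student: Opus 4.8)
The plan is to transport both statements to the Kazhdan-graded picture, where $\gr_{F_\bullet}\U(\g,e)\cong\CC[\s]$ by Theorem~\ref{gr}, and then to exploit Kostant's theory of the Slodowy slice $\s=e+\C$. I will use throughout that the Kazhdan filtration is bounded below and exhaustive, both on $\U(\g)$ and on $\U(\g,e)$ and their subalgebras, so that equality of associated graded subalgebras lifts to equality of the filtered subalgebras.

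For part (a), I would first note that every $z\in Z(\U(\g))$ satisfies $[h,z]=0$, hence lies in $\U(\g)(0)$; so on $Z(\U(\g))$ the Kazhdan filtration is just twice the PBW one, and under the canonical identification $\gr_{F_\bullet}\U(\g)\cong S(\g)=\CC[\g^*]$ the graded subalgebra $\gr_{F_\bullet}Z(\U(\g))$ is exactly $\CC[\g^*]^{G}$. With this identification $\gr_{F_\bullet}\rho$ becomes the restriction homomorphism $\CC[\g^*]^{G}\to\CC[\s]$, $f\mapsto f|_\s$, and by Kostant's transversality theorem the composition $\s\hookrightarrow\g^*\to\g^*/\!/G$ is faithfully flat, hence dominant; so $f\mapsto f|_\s$ is injective on $\CC[\g^*]^{G}$. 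Thus $\gr_{F_\bullet}\rho$ is injective, and therefore so is $\rho$.

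For part (b), part (a) shows that the graded subalgebra $\gr_{F_\bullet}\rho\big(Z(\U(\g))\big)$ of $\CC[\s]$ equals $\CC[\g^*]^{G}|_\s$. On the other hand, the Kazhdan filtration is almost commutative, so the principal symbol of an element of $Z(\U(\g,e))$ is a Casimir of the Poisson algebra $\gr_{F_\bullet}\U(\g,e)=\CC[\s]$; hence
$$\CC[\g^*]^{G}|_\s=\gr_{F_\bullet}\rho\big(Z(\U(\g))\big)\subseteq\gr_{F_\bullet}Z(\U(\g,e))\subseteq\zz_{\mathrm{Pois}}\big(\CC[\s]\big).$$
The key step will be the reverse inclusion $\zz_{\mathrm{Pois}}(\CC[\s])\subseteq\CC[\g^*]^{G}|_\s$. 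To get it I would use that (Slodowy, Gan--Ginzburg) for a regular semisimple orbit $\mathcal O$ the intersection $\s\cap\mathcal O$ is a connected smooth symplectic subvariety, i.e.\ a symplectic leaf of $\CC[\s]$; a Poisson-central function is constant on every symplectic leaf, hence constant on $\s\cap\pi^{-1}(c)$ for all $c$ in a dense open of $\g^*/\!/G$, where $\pi\colon\g^*\to\g^*/\!/G$ is the adjoint quotient. Since $\pi|_\s$ is flat and surjective and $\s$ is irreducible (it is an affine space), such a function descends to a regular function on $\g^*/\!/G$, i.e.\ lies in $\CC[\g^*]^{G}|_\s$. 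Hence all three terms above coincide, $\gr_{F_\bullet}\rho\big(Z(\U(\g))\big)=\gr_{F_\bullet}Z(\U(\g,e))$, and this lifts to $\rho\big(Z(\U(\g))\big)=Z(\U(\g,e))$.

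The step I expect to be the main obstacle is the equality $\zz_{\mathrm{Pois}}(\CC[\s])=\CC[\g^*]^{G}|_\s$, specifically the passage from ``$z$ constant on the generic fibres of the Kostant map $\pi|_\s$'' to ``$z$ pulled back from $\g^*/\!/G$''. The way around it is to use that $\pi|_\s$ is flat (Kostant) and that $\CC[\g^*/\!/G]$ is a polynomial ring, hence a UFD: writing the induced rational function downstairs as $a/b$ in lowest terms, flatness of $\pi|_\s$ forces $b$ to be a unit. An alternative route to surjectivity would run through Skryabin's equivalence, identifying $\U(\g,e)\cong\End_{\U(\g)}\big(\U(\g)/\U(\g)\m'\big)^{\mathrm{op}}$ and applying a double-centraliser argument; that, however, rests on the same circle of ideas, now packaged as the faithfulness of the generalised Gelfand--Graev module $\U(\g)/\U(\g)\m'$ together with the double-centraliser property for it.
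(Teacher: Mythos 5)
The paper does not prove Theorem~\ref{center}: both parts are imported as citations to Premet, (a) to \cite[Section 6.2]{P1} and (b) to the footnote of \cite[Question 5.1]{P2}. So there is no internal argument to compare yours against; what can be compared is your reconstruction versus Premet's.

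For (a), your argument is essentially Premet's: pass to the Kazhdan associated graded, observe that central elements lie in $\U(\g)(0)$ so the Kazhdan filtration on $Z(\U(\g))$ is twice the PBW one, identify $\gr\rho$ with the Chevalley restriction $\CC[\g^*]^G\to\CC[\s]$, and use faithful flatness (Kostant for $e$ principal, Premet in general) of $\pi|_{\s}\colon \s\to\g^*/\!/G$ to conclude injectivity. One small inaccuracy: the Kazhdan filtration on $\U(\g)$ itself is \emph{not} bounded below in general (weights of $\ad h$ on $\g$ can exceed $2$), but it is bounded below on $\U(\g,e)$ and on $Z(\U(\g))$, which is what you actually use, so the lifting from $\gr$ is sound.

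For (b), your reduction to the Poisson statement $\zz_{\mathrm{Pois}}(\CC[\s])=\CC[\g^*]^G|_{\s}$ is correct (and is in fact the ``Poisson analogue of Theorem~\ref{center}'' that the paper later invokes in the proof of Theorem~\ref{main 2}). But the step you yourself identify as the crux is also where a genuine gap sits, and not exactly where you placed it. Your descent of a Poisson-central $z$ from $\s$ to $\g^*/\!/G$ via a UFD/faithful-flatness argument is fine \emph{once} one knows that $z$ is constant on $\s\cap\pi^{-1}(c)$ for generic $c$. You justify that by asserting that $\s\cap\mathcal{O}$ is connected for regular semisimple $\mathcal{O}$, attributing it to Slodowy and Gan--Ginzburg. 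What \cite{GG} gives is that the symplectic leaves of $\s$ are the \emph{connected components} of $\s\cap\mathcal{O}$; it does not by itself tell you that $\s\cap\mathcal{O}$ is connected, and connectedness of generic fibers does not follow formally from irreducibility of $\s$ plus flatness of $\pi|_{\s}$ (compare $y^2=tx^2$ over $\operatorname{Spec}\CC[t]$). This connectedness is true and can be established, but it needs a separate argument (e.g.\ via Stein factorization plus the Kazhdan grading, or by reducing to the nilpotent fiber), and as written it is the missing link. You should also be aware that the actual argument recorded in Premet's footnote -- the thing the paper is citing -- is the Skryabin/Whittaker-bimodule double-centralizer argument that you list as an ``alternative''; it sidesteps the symplectic-leaf geometry entirely, which is precisely what makes it preferable as a reference.
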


%%%%%%%%%%%%%%%%%%%%%%%%%%%%%%%%%%%%%%%%%%%%%%%%%%%%%%%%%%%%%%%%%%%%%%%%%%%%%%%%%%%%%%%%%%%%%%%%%%%%%%%%%%%%%%%%%%%%%%%%%%%%%%%%%%%%%%%%%%%%%%%%%%%%%%

\medskip
\section{Main Theorem}

  Let us consider $\g=\ssl_N$ or $\g=\spn_{2N}$, and let $e_m\in \g$ be a \emph{$1$-block} nilpotent element of Jordan type $(1,\ldots,1,m)$ or $(1,\ldots,1,2m)$,
 respectively. We make a particular choice of such $e_m$:

\medskip
\noindent
 $\bullet$ $e_m=E_{N-m+1,N-m+2}+\ldots+E_{N-1,N}$ in the case of $\ssl_N,\ 2\leq m\leq N$,

\medskip
\noindent
 $\bullet$
 $e_m=E_{N-m+1,N-m+2}+\ldots+E_{N+m-1,N+m}$ in the case of
 $\spn_{2N},\ 1\leq m\leq N$.
    \footnote {\ We view $\spn_{2N}$ as corresponding to the pair $(V_{2N},\omega_{2N})$, where $\omega_{2N}$ is represented by the skew symmetric
                 \textit{antidiagonal} matrix $J=(J_{ij}:=(-1)^j\delta_{i+j}^{2N+1})_{1\leq i,j\leq 2N}$. In this
                 presentation, $A=(a_{ij})\in\spn_{2N}$ if and only if $a_{2N+1-j,2N+1-i}=(-1)^{i+j+1}a_{ij}$ for any $1\leq i,j\leq 2N$.}

\medskip
  Recall the Lie algebra inclusion $\iota: \q\hookrightarrow U(\g,e)$ from Section~\ref{W properties}. In our cases:

\medskip\noindent
$\bullet$
  For $(\g,e)=(\ssl_{n+m},e_m)$, we have $\q\simeq \gl_n$. Define $\bar{T}\in U(\ssl_{n+m},e_m)$ to be the $\iota$-image of
 the identity matrix $I_n\in \gl_n$, the latter being identified with $T_{n,m}=\diag(\frac{m}{n+m},\cdots,\frac{m}{n+m},\frac{-n}{n+m},\cdots,\frac{-n}{n+m})$
 under the inclusion $\q\hookrightarrow \ssl_{n+m}$. Let $\Gr$ be the induced $\ad(\bar{T})$-weight grading on $U(\ssl_{n+m},e_m)$, with the $j$-th grading component denoted by
 $U(\ssl_{n+m},e_m)_j$.

\medskip
\noindent
 $\bullet$
  For $(\g,e)=(\spn_{2n+2m},e_m)$, we have $\q\simeq \spn_{2n}$. Define $\bar{T}':=\iota(I'_n)\in U(\spn_{2n+2m},e_m)$, where
 $I'_n=\diag(1,\ldots,1,-1,\ldots,-1)\in \spn_{2n}\simeq \q$. Let $\Gr$ be the induced $\ad(\bar{T}')$-weight grading on
 $U(\spn_{2n+2m},e_m)=\bigoplus_j U(\spn_{2n+2m},e_m)_j$.

\begin{lem}\label{fck}
  There is a natural Lie algebra inclusion $\Theta:\gl_n\ltimes V_n\hookrightarrow U(\ssl_{n+m},e_m)$ such that $\Theta\mid_{\gl_n}=\iota\mid_{\gl_n}$ and
 $\Theta(V_n)=F_{m+1}U(\ssl_{n+m},e_m)_1$.
\end{lem}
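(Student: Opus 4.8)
The plan is to realise the $V_n$-summand of $\Theta$ as the subspace $F_{m+1}\U(\g,e_m)_1$ of $\U(\g,e_m)$ itself (here and below $\g=\ssl_{n+m}$), and to read everything off the graded picture $\gr_{F_\bullet}\U(\g,e_m)\cong\CC[\s]=S(\zz_\chi)$ provided by Theorem~\ref{gr}.

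\emph{First} I would decompose $\zz_\chi=\zz_{e_m}$. Writing $\CC^{n+m}=\CC^n\oplus\CC^m$ for the sum of the $n$ trivial Jordan blocks and the single block of size $m$, we get $\gl_{n+m}=\gl_n\oplus\gl_m\oplus\operatorname{Hom}(\CC^m,\CC^n)\oplus\operatorname{Hom}(\CC^n,\CC^m)$, with $\q\cong\gl_n$ the first summand. Using the $\mathfrak{sl}_2$-triple $(e_m,h,f)$ one checks directly that $\zz_{e_m}\cap\gl_m=\CC[e_m]$, occupying $\ad(h)$-weights $0,2,\dots,2(m-1)$; that $\zz_{e_m}\cap\operatorname{Hom}(\CC^m,\CC^n)\cong\CC^n$ consists of the maps vanishing on $\operatorname{im}e_m$ and lies in $\ad(h)$-weight $m-1$; and that $\zz_{e_m}\cap\operatorname{Hom}(\CC^n,\CC^m)\cong(\CC^n)^*$ consists of the maps with image in $\ker e_m$ and also lies in $\ad(h)$-weight $m-1$ (we describe $\zz_{e_m}$ inside $\gl_{n+m}$; passing to $\ssl_{n+m}$ only removes a line from the weight-$0$ summand $\CC[e_m]$, which is irrelevant below). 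Finally, the grading of $\zz_{e_m}$ by $\bar{T}=\iota(I_n)$, i.e. by $\ad(T_{n,m})$, assigns weight $0$ to $\gl_n$ and to $\CC[e_m]$, weight $+1$ to $\zz_{e_m}\cap\operatorname{Hom}(\CC^m,\CC^n)$, and weight $-1$ to $\zz_{e_m}\cap\operatorname{Hom}(\CC^n,\CC^m)$.

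\emph{Next} I would transfer this to $\U(\g,e_m)$ via $\gr$. By Theorem~\ref{gr}, $\gr_{F_\bullet}\U(\g,e_m)=S(\zz_\chi)$ with $\zz_\chi\cap\g(i)$ placed in Kazhdan degree $i+2$; moreover, since $\ad(\bar{T})$ acts on $\gr_{F_\bullet}\U(\g,e_m)=\CC[\s]$ as $T_{n,m}\in\q$ does (Section~\ref{W properties}(a)), the $\ad(\bar{T})$-grading of $\U(\g,e_m)$ induces on $S(\zz_\chi)$ exactly the grading by the $\bar{T}$-weights of the previous step. Because every generator of $\bar{T}$-weight $\pm1$ has Kazhdan degree $m+1$ while every generator of weight $0$ has Kazhdan degree $\geq 2$, any monomial of $\bar{T}$-weight $1$ and Kazhdan degree $\leq m+1$ must be a single generator of $\zz_{e_m}\cap\operatorname{Hom}(\CC^m,\CC^n)$. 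Hence $F_m\U(\g,e_m)_1=0$ and $F_{m+1}\U(\g,e_m)_1\xrightarrow{\sim}\gr_{F_{m+1}}\U(\g,e_m)_1\cong\zz_{e_m}\cap\operatorname{Hom}(\CC^m,\CC^n)$, an $n$-dimensional space which is $\gl_n$-equivariantly isomorphic to $V_n$.

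\emph{Finally} I would verify that the span of $\iota(\gl_n)$ and $F_{m+1}\U(\g,e_m)_1$ is a Lie subalgebra isomorphic to $\gl_n\ltimes V_n$. Since $\gr_{F_\bullet}\U(\g,e_m)$ is a graded Poisson algebra with bracket of degree $-2$ (Theorem~\ref{gr}), one has $[F_a,F_b]\subseteq F_{a+b-2}$. As $\iota(\gl_n)\subseteq F_2\U(\g,e_m)_0$, this gives $[\iota(\gl_n),F_{m+1}\U(\g,e_m)_1]\subseteq F_{m+1}\U(\g,e_m)_1$, and on $\gr$ this action is the standard $\gl_n$-action on $\zz_{e_m}\cap\operatorname{Hom}(\CC^m,\CC^n)\cong V_n$, so by the previous step $F_{m+1}\U(\g,e_m)_1$ is indeed the $\gl_n$-module $V_n$. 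Similarly $[F_{m+1}\U(\g,e_m)_1,F_{m+1}\U(\g,e_m)_1]\subseteq F_{2m}\U(\g,e_m)_2=0$, the vanishing because a monomial of $\bar{T}$-weight $2$ needs Kazhdan degree $\geq 2(m+1)>2m$. Defining $\Theta$ to be $\iota$ on $\gl_n$ and any $\gl_n$-equivariant isomorphism $V_n\xrightarrow{\sim}F_{m+1}\U(\g,e_m)_1$ on $V_n$, these three bracket computations show that $\Theta$ is a homomorphism of Lie algebras; it is injective because $\iota|_{\gl_n}$ is injective, $\Theta|_{V_n}$ is injective, and the two images lie in distinct $\ad(\bar{T})$-eigenspaces. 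The step I expect to require the most care is the compatibility claim used above — that the $\ad(\bar{T})$-grading of $\U(\g,e_m)$ descends to the $\ad(T_{n,m})$-weight grading on $S(\zz_\chi)$ — together with correctly pinning down the $\ad(h)$-weights, hence the Kazhdan degrees, of the centralizer pieces; once those are in hand, everything else is bookkeeping with the two gradings.
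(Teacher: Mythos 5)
Your proof is correct and follows essentially the same strategy as the paper: decompose the centralizer $\zz_\chi$, compute both the $\ad(h_m)$-grading (hence Kazhdan degrees) and the $\ad(T_{n,m})$-grading, identify $F_{m+1}\U(\g,e_m)_1$ with the $V_n$-piece of $\zz_\chi$ via $\gr$, and check the three bracket relations by filtration-degree bookkeeping. The only difference is cosmetic: you describe $\zz_{e_m}$ structurally via the block decomposition $\gl_{n+m}=\gl_n\oplus\gl_m\oplus\operatorname{Hom}(\CC^m,\CC^n)\oplus\operatorname{Hom}(\CC^n,\CC^m)$, while the paper writes out an explicit matrix basis ($y_i\mapsto E_{i,n+m}$, etc.); these are the same computation expressed differently.
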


\begin{proof}
  First, choose a Jacobson-Morozov $\ssl_2$-triple $(e_m,h_m,f_m)\subset \ssl_{n+m}$ in a standard
  way. \footnote{\ That is we set $h_m:=\sum_{j=1}^m {(m+1-2j)E_{n+j,n+j}}$ and $f_m:=\sum_{j=1}^{m-1} j(m-j)E_{n+j+1,n+j}$.}
  As a vector space, $\zz_\chi\cong \gl_n\oplus V_n\oplus V_n^*\oplus \CC^{m-1}$ with
 $\gl_n=\zz_\chi(0)=\q,\ V_n\oplus V_n^*\subset \zz_\chi(m-1)$ and $\xi_j\in \zz_\chi(2m-2j-2)$.
  Here $\CC^{m-1}$ has a basis $\{\xi_{m-2-j}=E_{n+1,n+j+2}+\ldots+E_{n+m-j-1,n+m}\}_{j=0}^{m-2}$,
 $V_n\oplus V_n^*$ is embedded via $y_i \mapsto E_{i,n+m},\ x_i\mapsto E_{n+1,i}$, while
 $\gl_n\cong\ssl_n\oplus \CC\cdot I_n$ is embedded in the following way: $\ssl_n\hookrightarrow \ssl_{n+m}$ as a \emph{left-up block},
 while $I_n\mapsto T_{n,m}$.

  Under the identification $\gr_{F_{\bullet}}U(\ssl_{n+m},e_m)\simeq \CC[\s]\simeq S(\zz_\chi)$, the induced grading $\Gr '$ on $S(\zz_\chi)$ is the
 $\ad(T_{n,m})$-weight grading. Together with the above description of $\ad(h_m)$-grading on $\zz_\chi$, this implies that
 $F_mU(\ssl_{n+m},e_m)_1=0$ and that $F_{m+1}U(\ssl_{n+m},e_m)_1$ coincides with the image of the composition
 $V_n\hookrightarrow \zz_\chi\hookrightarrow S(\zz_\chi)$.
  Let $\Theta(y)\in F_{m+1}U(\ssl_{n+m},e_m)_1$ be the element whose image is identified with $y$.
 We also set $\Theta(A):=\iota(A)$ for $A\in \gl_n$. Finally, we define $\Theta:\gl_n\oplus V_n\hookrightarrow  U(\ssl_{n+m},e_m)$ by linearity.

  We claim that $\Theta$ is a Lie algebra inclusion, that is
    $$[\Theta(A),\Theta(B)]=\Theta([A,B]),\ [\Theta(y),\Theta(y')]=0,\ [\Theta(A),\Theta(y)]=\Theta(A(y)),\ \forall\ A,B\in \gl_n, y,y'\in  V_n.$$
  The first equality follows from $[\Theta(A),\Theta(B)]=[\iota(A),\iota(B)]=\iota([A,B])=\Theta([A,B])$.
 The second one follows from the observation that $[\Theta(y),\Theta(y')]\in F_{2m}\U(\g,e_m)_2$ and the only such element is $0$.
 Similarly, $[\Theta(A),\Theta(y)]\in F_{m+1}\U(\g,e_m)_1$, so that $[\Theta(A),\Theta(y)]=\Theta(y')$ for some $y'\in V_n$.
 Since $y'=\gr(\Theta(y'))=\gr([\Theta(A),\Theta(y)])=[A,y]=A(y)$, we get $[\Theta(A),\Theta(y)]=\Theta(A(y))$.
\end{proof}

 Our main result is:
\begin{thm}\label{main 1}

 (a) For $m\geq 2$, there is a unique isomorphism $\bar{\Theta}:H_m(\gl_n)\iso \U(\ssl_{n+m},e_m)$  of filtered algebras, whose restriction to
 $\ssl_n\ltimes V_n\hookrightarrow H_m(\gl_n)$ is equal to $\Theta$.

\noindent
 (b) For $m\geq 1$, there are exactly two isomorphisms $\bar{\Theta}_{(1)}, \bar{\Theta}_{(2)}:H_m(\spn_{2n})\iso \U(\spn_{2n+2m},e_m)$
     of filtered algebras such that $\bar{\Theta}_{(i)}\mid_{\spn_{2n}}=\iota\mid_{\spn_{2n}}$;
     moreover, $\bar{\Theta}_{(2)}\circ \bar{\Theta}_{(1)}^{-1}:y\mapsto -y, A\mapsto A, \z_k\mapsto \z_k$.
\end{thm}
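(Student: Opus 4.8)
The plan is to construct the isomorphism $\bar\Theta$ by extending the Lie algebra inclusion $\Theta\colon \gl_n\ltimes V_n\hookrightarrow \U(\ssl_{n+m},e_m)$ of Lemma~\ref{fck} to all of $V_n^*$, and then to the central variables $\z_j$, and finally to verify that the resulting algebra map is filtered, bijective, and unique. First I would analyze the weight grading $\Gr$ more carefully: the element $\bar T=\iota(I_n)$ acts on $\U(\ssl_{n+m},e_m)$ semisimply with integer weights, and the computation in Lemma~\ref{fck} (using $\gr_{F_\bullet}\U(\ssl_{n+m},e_m)\simeq S(\zz_\chi)$ together with the explicit $\ad(h_m)$- and $\ad(T_{n,m})$-gradings on $\zz_\chi$) shows that the low-filtration parts of the weight-$(\pm1)$ components $\U(\ssl_{n+m},e_m)_{\pm1}$ are exactly $\Theta(V_n)$ and, symmetrically, a copy of $V_n^*$; likewise the weight-$0$ part in appropriate filtration degree recovers $\gl_n=\iota(\gl_n)$, plus higher-degree pieces corresponding to the $\xi_j$, which will become the central $\z_j$. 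So I define $\Theta(x)\in F_{m+1}\U(\ssl_{n+m},e_m)_{-1}$ for $x\in V_n^*$ as the element whose symbol in $S(\zz_\chi)$ is $x$, and $\bar\Theta(\z_j)$ as the images (up to lower-order/central correction) of the central generators of $Z(\U(\ssl_{n+m}))$ under $\rho$ (Theorem~\ref{center}), matched by degree via $\deg(\z_j)=2(m-j)$; the Casimir-type central elements of $\U(\ssl_{n+m})$ map under $\rho$ onto generators of $Z(\U(\ssl_{n+m},e_m))$, and these are what the $\z_j$ must correspond to.

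Next I would check that $\bar\Theta$ is a well-defined algebra homomorphism, i.e. that the defining relations of $H_m(\gl_n)$ are satisfied by the images. The relations $[x,x']=0$, $[y,y']=0$ (already in Lemma~\ref{fck}), $[A,x]=A(x)$, $[A,y]=A(y)$, and centrality of the $\z_j$ all follow by the same degree-counting argument as in the proof of Lemma~\ref{fck}: the relevant brackets land in a filtered piece of a weight component that is either zero or one-dimensional, and the symbol computation in $S(\zz_\chi)$ pins down the answer. The one substantive relation is $[y,x]=\sum_{j=0}^{m-2}\z_j r_j(y,x)+r_m(y,x)$: the bracket $[\Theta(y),\Theta(x)]$ lies in $F_{2m+2}\U(\ssl_{n+m},e_m)_0$, which by the grading analysis is spanned by (the $\iota$-images of) polynomials in $\gl_n$ and the central elements, so $[\Theta(y),\Theta(x)]$ is automatically \emph{some} $\gl_n$-invariant pairing $\eta(y,x)$ valued in $\U(\gl_n)[\z]$ with the correct degree bound; by the universal PBW statement of Remark~\ref{PBW_updated} such an $\eta$ must be $\sum_i \eta_i(\z)r_i(y,x)$, and one then fixes the coefficients $\eta_i$. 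It is precisely here that I would invoke the Poisson-side computation promised in Section~3 and the explicit description of the Poisson center in Theorem~\ref{Poisson-center}: passing to associated graded, $\bar\Theta$ induces a map $H_m^{\cl}(\gl_n)\to \CC[\s]\simeq S(\zz_\chi)$, and matching the two descriptions of the Poisson center (the $\tau_k+c_k$ on one side, the images of Casimirs on the other) forces the normalization $\eta_i=\z_i$ for $i<m$, $\eta_m=1$. This reduces the hard identity to a Poisson-bracket computation, which is carried out classically.

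With $\bar\Theta$ established as a filtered algebra homomorphism, bijectivity follows by a standard associated-graded argument: $\gr\bar\Theta$ is the map $H_m^{\cl}(\gl_n)\to S(\zz_\chi)=\gr\U(\ssl_{n+m},e_m)$ sending $\ssl_n\ltimes V_n$, $V_n^*$, $\z_j$ to their symbols; since $H_m^{\cl}(\gl_n)$ is a polynomial algebra on $\gl_n\oplus V_n\oplus V_n^*\oplus\{\z_j\}$ of the same graded dimension as $S(\zz_\chi)$ (by Theorem~\ref{gr} and the $\zz_\chi\cong\gl_n\oplus V_n\oplus V_n^*\oplus\CC^{m-1}$ decomposition from Lemma~\ref{fck}), $\gr\bar\Theta$ is an isomorphism, hence so is $\bar\Theta$. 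Uniqueness follows because $\U(\ssl_{n+m},e_m)$ is generated as an algebra by the weight components $\U(\ssl_{n+m},e_m)_{-1}, \U(\ssl_{n+m},e_m)_0, \U(\ssl_{n+m},e_m)_{1}$ in low filtration degree together with the center — equivalently, $H_m(\gl_n)$ is generated by $\gl_n\oplus V_n\oplus V_n^*$ and $\z_j$ — so any filtered isomorphism agreeing with $\Theta$ on $\ssl_n\ltimes V_n$ is determined on $\gl_n$ (since $I_n$ is central in $\gl_n$ acting with the prescribed weights, forcing $\bar\Theta(I_n)=\bar T$), on $V_n^*$ up to the symbol (forced by the relation $[y,x]$), and on the $\z_j$ (forced by centrality and degree). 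For part (b), the argument is identical with $\gl_n$ replaced by $\spn_{2n}$, $V_n\oplus V_n^*$ by $V_{2n}$, and Remark~\ref{PBW_updated2}, Lemma~\ref{less_parameters2}, Theorem~\ref{Poisson-center}(b) in place of their $\gl_n$-counterparts; the difference is that $\spn_{2n}$ is already semisimple so there is no ``$I_n$'' to rigidify the sign, and the symplectic form $\omega$ only determines $\Theta$ on $V_{2n}$ up to the overall sign $y\mapsto -y$ (which preserves all relations since $r_{2j}(x,y)$ is even in $V_{2n}$), yielding exactly the two isomorphisms $\bar\Theta_{(1)},\bar\Theta_{(2)}$ related as stated.

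I expect the main obstacle to be the identification of the $[y,x]$-relation with the correct coefficients, i.e. showing $\eta_i=\z_i$ and $\eta_m=1$ rather than some other polynomials in the $\z$'s of the right degree; the universal PBW property (Remark~\ref{PBW_updated}) narrows the possibilities but does not by itself fix the normalization, and this is exactly why the Poisson computation of Section~3 together with the explicit Poisson center (Theorem~\ref{Poisson-center}) — and, for the fully explicit form, the Casimir element computations from \cite{DT} invoked in Section~4 — is needed. Everything else is degree bookkeeping on weight components of $\U(\g,e_m)$ via $\gr_{F_\bullet}\U(\g,e_m)\simeq S(\zz_\chi)$.
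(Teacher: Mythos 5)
Your overall strategy---extend $\Theta$ to $V_n^*$ using the $\ad(\bar T)$-weight grading, pick central elements $\Theta_i := \rho(F_{m-i})$ as candidates for $\bar\Theta(\z_i)$, write $[\Theta(y),\Theta(x)]$ as $\sum_j\eta_j r_j(y,x)$ via Remark~\ref{PBW_updated}, and pin down the $\eta_j$ by comparing with the Poisson side (Theorems~\ref{Poisson-center},~\ref{main 2})---is precisely the route the paper takes, and the bijectivity-by-associated-graded and $\pm$-ambiguity points for part~(b) are also correctly identified.

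There is, however, a genuine gap in the treatment of $\bar\Theta(I_n)$. You write that the weight constraints force $\bar\Theta(I_n)=\bar T$, but they only force $\bar\Theta(I_n)-\bar T$ to be central of Kazhdan degree $\le 2$, i.e.\ a constant, and this constant turns out to be \emph{nonzero}. The source of the subtlety is the coefficient $\eta_{m-1}$ (a scalar, since $\deg\eta_{m-1}\le 2$ and the central generators all have degree $\ge 4$): the universal relation $[y,x]=\sum_{j\le m-2}\z_jr_j+r_m$ has \emph{no} $r_{m-1}$ term, yet $\eta_{m-1}$ is generally nonzero---indeed the paper computes $\eta_{m-1}=\tfrac{n+m}{2}$ in Section~\ref{Section Casimir}. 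This is remedied exactly by a constant shift $\bar\Theta(I_n)=\Theta(I_n)-\tfrac{n\eta_{m-1}}{(n+m)\eta_m}$ (cf.\ Lemma~\ref{less_parameters}(b) and the footnote in the proof of Theorem~\ref{main 1}), which modifies the pairing by $\varphi_\lambda$ and kills the $r_{m-1}$ contribution. Without this shift, the naive extension $\bar\Theta(I_n)=\bar T$ does \emph{not} send the $[y,x]$ relation of $H_m(\gl_n)$ to a valid relation, so both your construction step and your uniqueness argument fail at this point. A secondary imprecision: the Poisson computation does not literally give $\eta_i=\z_i$; it gives $\bar\eta_j=(-1)^{m-j}\bar\Theta_j$ at the classical level and hence that each quantum $\eta_j$ contains a nonzero multiple of $\Theta_j$ (plus possibly lower-degree terms). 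The isomorphism is then built by the resulting triangular, hence invertible, change of variables $\z_j\mapsto\bar\Theta(\z_j)\in\CC[\Theta_j,\ldots,\Theta_{m-2}]$, together with the rescaling $\bar\Theta(x)=\eta_m^{-1}\Theta(x)$---not by asserting equality of coefficients.
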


  Let us point out that there is no explicit presentation of $W$-algebras in terms of generators and relations in general.
 Among few known cases are: (a) $\g=\gl_n$, due to~\cite{BK1}, (b) $\g\ni e$--the minimal nilpotent, due to~\cite[Section 6]{P2}.
 The latter corresponds to $(e_2,\ssl_N)$ and $(e_1,\spn_{2N})$ in our notation. We establish the corresponding isomorphisms explicitly in
 Appendix B.

\begin{proof}[Proof of Theorem~\ref{main 1}]
$\ $

\noindent
 (a) Analogously to Lemma~\ref{fck}, we have an identification $F_{m+1}U(\ssl_{n+m},e_m)_{-1}\simeq V_n^*$.
 For any $x\in V_n^*$, let $\Theta(x)\in F_{m+1}U(\ssl_{n+m},e_m)_{-1}$ be the element identified with $x\in V_n^*$.
 The same argument as in the proof of Lemma~\ref{fck} implies $[\Theta(A),\Theta(x)]=\Theta(A(x))$.

  Let $\{\wt{F}_j\}_{j=2}^{n+m}$ be the standard degree $j$ generators of
 $\CC[\ssl_{n+m}]^{\mathrm{SL}_{n+m}}\simeq S(\ssl_{n+m})^{\mathrm{SL}_{n+m}}$ (that is $1+\sum_{j=2}^{n+m}{\wt{F}_j(A)z^j}=\det(1+zA)$
 for $A\in \ssl_{n+m}$) and $F_j:=\Sym(\wt{F}_j)\in U(\ssl_{n+m})$ be the free generators of $Z(\U(\ssl_{n+m}))$.
  For all $0\leq i\leq m-2$ we set $\Theta_i:=\rho (F_{m-i})\in Z(\U(\ssl_{n+m},e_m))$.
 Then $\gr(\Theta_k)=\wt{F}_{m-k}{_{\mid_{\s}}}\equiv \xi_k \mod S(\gl_n\oplus \bigoplus_{l=k+1}^{m-2}\CC\xi_l)$,
 where $\xi_k$ was defined in the proof of Lemma~\ref{fck}.

 Let $U'$ be a subalgebra of $U(\ssl_{n+m},e_m)$, generated by $\Theta(\gl_n)$ and $\{\Theta_k\}_{k=0}^{m-2}$.
 For all $y\in V_n,\ x\in V_n^*$ we define $W(y,x):=[\Theta(y),\Theta(x)]\in F_{2m}\U(\ssl_{n+m},e_m)_0\subset U'$.
 Let us point out that equalities $[\Theta(A),\Theta(x)]=\Theta([A,x]),\
 [\Theta(A),\Theta(y)]=\Theta([A,y])$ (for all $A\in \gl_n, y\in V_n, x\in V_n^*$) imply the $\gl_n$-invariance of $W:V_n\times V_n^*\to U'\simeq U(\gl_n)[\Theta_0,\ldots.\Theta_{m-2}]$.

 By Theorem~\ref{gr}, $U(\ssl_{n+m},e_m)$ has a basis formed by the ordered monomials in
 $$\{\Theta(E_{ij}),\ \Theta(y_k),\ \Theta(x_l),\ \Theta_0, \ldots,\Theta_{m-2}\}.$$

 In particular, $U(\ssl_{n+m},e_m)\simeq U(\gl_n)\ltimes T(V_n\oplus V_n^*)[\Theta_0,\ldots,\Theta_{m-2}]/(y\otimes x-x\otimes y-W(y,x))$
 satisfies the PBW property. According to Remark~\ref{PBW_updated},
 there exist polynomials $\eta_i\in \CC[\Theta_0,\ldots,\Theta_{m-2}]$, for $0\leq i\leq m-2$, such that
 $W(y,x)=\sum{\eta_jr_j(y,x)}$ and $\deg(\eta_i(\Theta_0,\ldots,\Theta_{m-2}))\leq 2(m-i)$.
  As a consequence of the latter condition: $\eta_m, \eta_{m-1}\in \CC$.

 The following claim follows from the main result of the next section (Theorem~\ref{main 2}):

\begin{claim}\label{regular1}
 (i) The constant $\eta_m$ is nonzero,

 (ii) The polynomial $\eta_i(\Theta_0,\ldots,\Theta_{m-2})$ contains a nonzero multiple of $\Theta_i$ for any $i\leq m-2$.
\end{claim}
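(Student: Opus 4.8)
The plan is to deduce both assertions from the Poisson model by reduction modulo the centre. First I would pass from $\U(\ssl_{n+m},e_m)$ to its associated graded, which by Theorem~\ref{gr} is $\CC[\s]\cong S(\zz_\chi)$, and track the grading $\Gr$ induced by $\ad(\bar T)=\ad(\iota(I_n))$. Under this identification the $\Gr$-grading on $S(\zz_\chi)$ is the $\ad(T_{n,m})$-weight grading, the elements $\Theta(y),\Theta(x)$ go to the generators $y\in V_n,\ x\in V_n^*$ of $\zz_\chi$, each $\Theta_k=\rho(F_{m-k})$ has leading term $\xi_k$ modulo lower-index $\xi_l$'s (as recorded in the text), and the bracket $W(y,x)=[\Theta(y),\Theta(x)]$ has leading term the Poisson bracket $\{y,x\}$ in $\CC[\s]$. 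So after taking $\gr$, the relation $W(y,x)=\sum_j\eta_j r_j(y,x)$ becomes the defining Poisson relation $\{y,x\}=\alpha_m(y,x)+\sum_{j\le m-2}\bar\eta_j\,\alpha_j(y,x)$ of $H_m^{\cl}(\gl_n)$ for suitable polynomials $\bar\eta_j$ in the $\xi$'s, and the whole $W$-algebra degenerates to $H_m^{\cl}(\gl_n)$ with $\z_j\leftrightarrow\xi_j$. This is exactly the content of Theorem~\ref{main 2} of the next section, which I am allowed to invoke: it gives a graded Poisson isomorphism $H_m^{\cl}(\gl_n)\iso\CC[\s]$ matching $\z_j$ with (the leading term of) $\Theta_j$ and $\alpha_j$ with the leading term of $\eta_j r_j$.

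Granting that, part (i) is immediate: the coefficient $\alpha_m(y,x)$ in the Poisson relation is nonzero (it is the top symmetrization term built into the definition of $H_m^{\cl}(\gl_n)$), and on the $W$-algebra side this top coefficient is precisely $\eta_m$, which we already know lies in $\CC$; hence $\eta_m\ne 0$. For part (ii), the same isomorphism identifies, for each $i\le m-2$, the polynomial $\bar\eta_i(\xi_0,\dots,\xi_{m-2})$ (the coefficient of $\alpha_i$ in the Poisson bracket) with the leading term of $\eta_i(\Theta_0,\dots,\Theta_{m-2})$. One then reads off from the explicit Poisson computation underlying Theorem~\ref{main 2} — equivalently from the structure of $\zz_\chi$ and the Poisson bracket on $\CC[\s]$, cf.\ Theorem~\ref{Poisson-center} — that $\bar\eta_i$ genuinely involves $\xi_i$ with nonzero coefficient (the pairing $\{y,x\}$ in weight $2m-2i$ of the $\ad(h_m)$-grading produces exactly the basis vector $\xi_i\in\zz_\chi(2m-2i-2)$, again by the weight bookkeeping of Lemma~\ref{fck}). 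Since a monomial $\Theta_i$ in the $\eta_i$ contributes, via $\gr(\Theta_i)\equiv\xi_i$, a $\xi_i$-term to $\bar\eta_i$, and no other $\Theta_l$ with $l>i$ can produce $\xi_i$ for degree reasons while $\Theta_l$ with $l<i$ are excluded by the degree bound $\deg(\eta_i)\le 2(m-i)$, the $\xi_i$-coefficient of $\bar\eta_i$ equals the $\Theta_i$-coefficient of $\eta_i$; hence the latter is nonzero.

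The spn$_{2n}$ case is entirely parallel, replacing $V_n\oplus V_n^*$ by $V_{2n}$, the $\alpha_j$ by the $\beta_{2j}$, and Remark~\ref{PBW_updated} by Remark~\ref{PBW_updated2}; the degree conventions in the definition of $H_m(\spn_{2n})$ give the analogous bound $\deg(\eta_i)\le 4(m-i)$, and Theorem~\ref{main 2} supplies the graded Poisson isomorphism $H_m^{\cl}(\spn_{2n})\iso\CC[\s]$.

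I expect the main obstacle to be keeping the two independent gradings straight — the Kazhdan filtration (equivalently the $\ad(h_m)$-grading, governing the $F_\bullet$-degrees and hence which $\xi_l$ can appear) versus the auxiliary $\Gr$-grading by $\ad(\bar T)$ (governing which $\Gr$-component $W(y,x)$ lives in) — and checking that the Poisson leading term of $[\Theta(y),\Theta(x)]$ is honestly the bracket $\{y,x\}$ of $H_m^{\cl}(\gl_n)$ and not a strictly lower-order piece; this is where Theorem~\ref{main 2}, proved independently in Section~3 by a direct Poisson-algebra computation, does the real work.
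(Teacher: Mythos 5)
Your proposal is correct and takes essentially the same route as the paper: the paper simply asserts that Claim~\ref{regular1} follows from Theorem~\ref{main 2}, and what you have written is exactly the intended derivation — pass to the Kazhdan associated graded so that $W(y,x)=\sum_j\eta_j r_j(y,x)$ degenerates to $\{y,x\}=\sum_j\bar\eta_j\alpha_j(y,x)$ in $\CC[\s]$, then read off $\eta_m=\bar\eta_m=1$ and, using the degree bound $\deg\eta_i\le 2(m-i)$ together with $\gr(\Theta_k)\equiv\xi_k\ (\mathrm{mod}\ S(\gl_n\oplus\bigoplus_{l>k}\CC\xi_l))$ and the absence of constant terms in $\gr(\Theta_k)$, conclude that the $\xi_i$-coefficient $(-1)^{m-i}$ of $\bar\eta_i$ can only arise from a genuine $\Theta_i$-term in $\eta_i$. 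The only weak spot is the parenthetical ``weight bookkeeping'' heuristic for why $\bar\eta_i$ contains $\xi_i$; on its own it proves nothing, but you correctly lean on the explicit output of Theorem~\ref{main 2} ($\bar\eta_i=(-1)^{m-i}\bar\Theta_i$), so the argument stands.
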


\noindent
 This claim implies the existence and uniqueness of the isomorphism $\bar{\Theta}:H_m(\gl_n)\iso \U(\ssl_{n+m},e_m)$ with
   $\bar{\Theta}(y_k)=\Theta(y_k)$ and $\bar{\Theta}(A)=\Theta(A)$ for $A\in \ssl_n$.

  Moreover, $\bar{\Theta}(x_k)=\eta_m^{-1}\Theta(x_k)$ and $\bar{\Theta}(I_n)=\Theta(I_n)-\frac{n\eta_{m-1}}{(n+m)\eta_m}$
 \footnote{\ The appearance of the constant $\frac{n\eta_{m-1}}{(n+m)\eta_m}$ is explained by the proof of
 Lemma~\ref{less_parameters}(b).},
 while $\bar{\Theta}(\z_k)\in \CC[\Theta_k,\ldots,\Theta_{m-2}]$.

\medskip
\noindent(b)
 Choose a Jacobson-Morozov $\ssl_2$-triple $(e_m,h_m,f_m)\subset \spn_{2n+2m}$ in a standard way.
 \footnote{\ That is $h_m:=\sum_{j=1}^{2m} (2m+1-2j) E_{n+j,n+j}$ and $f_m:=\sum_{j=1}^{2m-1} j(2m-j)E_{n+j+1,n+j}$.}
  As a vector space, $\zz_\chi\cong \spn_{2n}\oplus V_{2n}\oplus \CC^m$ with
 $\spn_{2n}=\zz_\chi(0),\ V_{2n}=\zz_\chi(2m-1)$ and $\xi_j\in \zz_\chi(4m-4j-2)$.
  Here $\CC^m$ has a basis $\{\xi_{m-k}=E_{n+1,n+2k}+\ldots+E_{n+2m-2k+1, n+2m}\}_{k=1}^m$, $V_{2n}$ is embedded via
  $$y_i\mapsto E_{i,n+2m}+(-1)^{n+i+1}E_{n+1,2n+2m+1-i},\ y_{n+i}\mapsto E_{n+2m+i,n+2m}+(-1)^{i+1}E_{n+1,n+1-i},\ i\leq n,$$
 while
 $\q=\zz_\chi(0)\simeq \spn_{2n}$ is embedded in a natural way (via four \emph{$n\times n$ corner blocks} of $\spn_{2n+2m}$).

  Recall the grading $\Gr$ on $U(\spn_{2n+2m},e_m)$. The induced grading $\Gr'$ on $\gr U(\spn_{2n+2m},e_m)$,
 is the $\ad(I'_n)$-weight grading on $S(\zz_\chi)$. The operator $\ad(I'_n)$ acts trivially on $\CC^m$,
 with even eigenvalues on $\spn_{2n}$ and with eigenvalues
 $\pm 1$ on $V^\pm_{2n}$, where $V^+_{2n}$ is spanned by $\{y_i\}_{i\leq n}$, while $V^-_{2n}$ is spanned by $\{y_{n+i}\}_{i\leq n}$.

  Analogously to Lemma~\ref{fck}, we get identifications of
 $F_{2m+1}U(\spn_{2n+2m},e_m)_{\pm 1}$ and $V^{\pm}_{2n}$. For $y\in V^\pm_{2n}$,
 let $\Theta(y)$ be the corresponding element of $F_{2m+1}U(\spn_{2n+2m},e_m)_{\pm 1}$, while for $A\in \spn_{2n}$ we set $\Theta(A):=\iota(A)$.
 We define $\Theta:\spn_{2n}\oplus V_{2n}\hookrightarrow U(\spn_{2n+2m},e_m)$ by linearity. The same reasoning as in the
 $\gl_n$-case proves that $[\Theta(A),\Theta(y)]=\Theta(A(y))$ for any $A\in \spn_{2n},y\in V_{2n}$.

  Finally, the argument involving the center goes along the same lines, so we can pick central generators $\{\Theta_k\}_{0\leq k\leq m-1}$
 such that $\gr(\Theta_k)\equiv \xi_k \mod S(\spn_{2n}\oplus \CC\xi_{k+1}\oplus\ldots\oplus\CC\xi_{m-1})$.

  Let $U'$ be the subalgebra of $U(\spn_{2n+2m},e_m)$, generated by $\Theta(\spn_{2n})$ and $\{\Theta_k\}_{k=0}^{m-1}$.
  For $x,y\in V_{2n}$, we set $W(x,y):=[\Theta(x),\Theta(y)]\in F_{4m}U(\spn_{2n+2m},e_m)_{\mathrm{even}}\subset U'$.
  The map
   $$W:V_{2n}\times V_{2n}\to U'\simeq  U(\spn_{2n})[\Theta_0,\ldots,\Theta_{m-1}]$$
  is $\spn_{2n}$-invariant.

  Since $U(\spn_{2n+2m},e_m)\simeq U(\spn_{2n})\ltimes T(V_{2n})[\Theta_0,\ldots,\Theta_{m-1}]/(x\otimes y-y\otimes x-W(x,y))$ satisfies the PBW
  property, there exist polynomials $\eta_i\in \CC[\Theta_0,\ldots,\Theta_{m-1}]$, for $0\leq i\leq m-1$, such that
 $W(x,y)=\sum{\eta_jr_{2j}(x,y)}$ and $\deg(\eta_i(\Theta_0,\ldots,\Theta_{m-1}))\leq 4(m-i)$ (Remark~\ref{PBW_updated2}).

 The following result is analogous to Claim~\ref{regular1} and will follow from Theorem~\ref{main 2} as well:
\begin{claim}\label{regular2}
 (i) The constant $\eta_m$ is nonzero,

 (ii) The polynomial $\eta_i(\Theta_0,\ldots,\Theta_{m-1})$ contains a nonzero multiple of $\Theta_i$ for any $i\leq m-1$.
\end{claim}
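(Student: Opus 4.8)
The plan is to reduce Claim~\ref{regular2} to the Poisson statement of the next section by passing to associated graded algebras. Recall from Theorem~\ref{gr} that $\gr_{F_\bullet}U(\spn_{2n+2m},e_m)\cong\CC[\s]\cong S(\zz_\chi)$ as graded Poisson algebras, and that, by the construction in part~(b) above (the analogue of Lemma~\ref{fck}), the principal symbols of the generators are $\sigma(\Theta(A))=A$ for $A\in\spn_{2n}$, $\sigma(\Theta(y))=y$ for $y\in V_{2n}$, and $\sigma(\Theta_k)=\gr(\Theta_k)\equiv\xi_k$ modulo $S(\spn_{2n}\oplus\CC\xi_{k+1}\oplus\cdots\oplus\CC\xi_{m-1})$, the latter being homogeneous of Kazhdan degree $4(m-k)$. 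In this grading $\spn_{2n}$ has degree $2$ and $V_{2n}$ degree $2m+1$, so $\beta_{2j}(x,y)$ is homogeneous of degree $4j$, every summand $\eta_j\,r_{2j}(x,y)$ has degree at most $4(m-j)+4j=4m$, while $\{\sigma\Theta(x),\sigma\Theta(y)\}$ has degree exactly $2(2m+1)-2=4m$.

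First I would take principal symbols on both sides of $W(x,y)=\sum_{j=0}^m\eta_j(\Theta_0,\ldots,\Theta_{m-1})\,r_{2j}(x,y)$. Decomposing $\eta_j$ into its component $\eta_j^{\mathrm{top}}$ of maximal admissible degree $4(m-j)$ and a part of strictly smaller degree, only the summands with $\eta_j^{\mathrm{top}}\neq 0$ contribute in top degree, so that in $S(\zz_\chi)$
\[
\{\sigma\Theta(x),\sigma\Theta(y)\}=\sum_{j=0}^m\eta_j^{\mathrm{top}}(\gr\Theta_0,\ldots,\gr\Theta_{m-1})\,\beta_{2j}(x,y).
\]
On the other hand, Theorem~\ref{main 2} provides an explicit graded Poisson isomorphism identifying $\gr_{F_\bullet}U(\spn_{2n+2m},e_m)$ with $H_m^{\cl}(\spn_{2n})$ (up to the $\ZZ/2\ZZ$-action) sending $x\mapsto\sigma\Theta(x)$, $A\mapsto\sigma\Theta(A)$, and $\z_j$ to a polynomial in $\xi_0,\ldots,\xi_{m-1}$ whose $\xi_j$-linear part is a nonzero scalar multiple of $\xi_j$. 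Transporting the defining relation $\{x,y\}=\beta_{2m}(x,y)+\sum_{j=0}^{m-1}\z_j\,\beta_{2j}(x,y)$ of $H_m^{\cl}(\spn_{2n})$ through this isomorphism gives a second expansion $\{\sigma\Theta(x),\sigma\Theta(y)\}=c\,\beta_{2m}(x,y)+\sum_{j=0}^{m-1}\mu_j\,\beta_{2j}(x,y)$ with $c\in\CC^*$ and $\mu_j$ a polynomial in $\xi_j,\ldots,\xi_{m-1}$ whose $\xi_j$-linear part is nonzero.

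The final step is to match the two expansions using the uniqueness of the representation of an $\spn_{2n}$-invariant pairing as a combination $\sum_j p_j\,\beta_{2j}(\cdot,\cdot)$ with coefficients of the prescribed degrees, which is part of the PBW classification of \cite{EGG} (cf.\ Remark~\ref{PBW_updated2}). It yields $\eta_m^{\mathrm{top}}=\eta_m=c\neq 0$, proving~(i), and $\eta_j^{\mathrm{top}}(\gr\Theta_0,\ldots,\gr\Theta_{m-1})=\mu_j$ for $j\le m-1$. Since $\deg\Theta_i=4(m-i)>4(m-j)$ for $i<j$, only $\Theta_j,\ldots,\Theta_{m-1}$ occur in $\eta_j^{\mathrm{top}}$, and among monomials in them the only one whose image under $\Theta_k\mapsto\gr\Theta_k$ contributes to the $\xi_j$-linear part is $\Theta_j$ itself; hence the coefficient of $\Theta_j$ in $\eta_j$ equals the $\xi_j$-coefficient of $\mu_j$, which is a nonzero scalar. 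This proves~(ii).

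The substantive work lies entirely in Theorem~\ref{main 2} --- the construction of the explicit Poisson isomorphism and the verification that the $\z_j$ correspond to the $\xi_j$ up to an invertible triangular change of variables --- carried out in Section~3; granting it, the only delicate points above are the uniqueness of the $\beta_{2j}$-expansion and the bookkeeping observation that $\Theta_j$ already sits in the top admissible degree of $\eta_j$, so passing to leading symbols loses no information about its coefficient. The proof of Claim~\ref{regular1} is entirely analogous, replacing $\spn_{2n}$, $V_{2n}$, $\beta_{2j}$ by $\gl_n$, $V_n\oplus V_n^*$, $\alpha_j$ and adjusting the degrees accordingly.
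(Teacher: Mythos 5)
Your argument is correct and follows the same route the paper intends: the paper's "proof" of Claim~\ref{regular2} is just the one-line assertion that it follows from Theorem~\ref{main 2}, and your writeup supplies exactly the missing reduction --- pass to the Kazhdan associated graded, use uniqueness of the $\beta_{2j}$-expansion (Remark~\ref{PBW_updated2}) to match $\gr(\eta_j^{\mathrm{top}})$ with the $\bar\eta_j$ computed in the proof of Theorem~\ref{main 2}(b) (namely $\bar\eta_m=2$, $\bar\eta_j=2\bar\Theta_j$), and then use the triangular form of $\bar\Theta_j\equiv\xi_j$ to recover the coefficient of $\Theta_j$ in $\eta_j$. One small shortcut you could take: since the $\bar\Theta_k$ are algebraically independent in $S(\zz_{n,m})$, the identity $\eta_j^{\mathrm{top}}(\bar\Theta_0,\ldots,\bar\Theta_{m-1})=2\bar\Theta_j$ already forces $\eta_j^{\mathrm{top}}=2\Theta_j$, so the bookkeeping with $\xi_j$-linear parts is not strictly needed.
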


  This claim implies Theorem~\ref{main 1}(b), where $\bar{\Theta}_{(i)}(y)=\lambda_i\cdot\Theta(y)$ for all $y\in V_{2n}$ and $\lambda_i^2=\eta_m^{-1}$.
\end{proof}

%%%%%%%%%%%%%%%%%%%%%%%%%%%%%%%%%%%%%%%%%%%%%%%%%%%%%%%%%%%%%%%%%%%%%%%%%%%%%%%%%%%%%%%%%%%%%%%%%%%%%%%%%%%%%%%%%%%%%%%%%%%%%%%%%%%%%%%%%%%%%%%%%%%%

\medskip
\section{Poisson analogue of Theorem~\ref{main 1}}

 To state the main result of this section, let us introduce more notation:

\medskip \noindent
 $\bullet$ In the contexts of $(\ssl_{n+m},e_m)$ and $(\spn_{2n+2m},e_m)$, we use $\s_{n,m}$ and $\zz_{n,m}$ instead of $\s$ and
 $\zz_\chi$.

\medskip \noindent
 $\bullet$ Let $\bar{\iota}:\gl_n\oplus V_n\oplus V_n^*\oplus \CC^{m-1}\iso \zz_{n,m}$ be the identification
           from the proof of Lemma~\ref{fck}.

\medskip \noindent
 $\bullet$ Let $\bar{\iota}:\spn_{2n}\oplus V_{2n}\oplus \CC^m\iso \zz_{n,m}$ be the identification from the proof of Theorem~\ref{main
 1}(b).

\medskip \noindent
 $\bullet$ Define $\bar{\Theta}_k=\gr(\Theta_k)\in S(\zz_{n,m})\ 0\leq k\leq m-s$,
           where $s=1$ for $\spn_{2N}$ and $s=2$ for $\ssl_N$.

\medskip \noindent
 $\bullet$ We consider the Poisson structure on $S(\zz_{n,m})$ arising from the identification
             $$S(\zz_{n,m})\cong \CC[\s_{n,m}].$$

 The following theorem can be viewed as a Poisson analogue of Theorem~\ref{main 1}:
\begin{thm}\label{main 2}
(a) The formulas
    $$\bar{\Theta}^{\cl}(A)=\bar{\iota}(A),\ \bar{\Theta}^{\cl}(y)=\bar{\iota}(y),\
      \bar{\Theta}^{\cl}(x)=\bar{\iota}(x),\ \bar{\Theta}^{\cl}(\z_k)=(-1)^{m-k}\bar{\Theta}_k$$
    define an isomorphism $\bar{\Theta}^{\cl}:H_m^{\cl}(\gl_n)\iso S(\zz_{n,m})\simeq \CC[\s_{n,m}]$ of Poisson algebras.

\medskip
(b) The formulas
    $$\bar{\Theta}^{\cl}(A)=\bar{\iota}(A),\ \bar{\Theta}^{\cl}(y)=\bar{\iota}(y)/\sqrt{2},\ \bar{\Theta}^{\cl}(\z_k)=\bar{\Theta}_k$$
    define an isomorphism $\bar{\Theta}^{\cl}:H_m^{\cl}(\spn_{2n})\iso S(\zz_{n,m})\simeq \CC[\s_{n,m}]$ of Poisson algebras.

\end{thm}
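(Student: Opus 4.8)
The plan is to verify first that $\bar{\Theta}^{\cl}$ is an isomorphism of commutative algebras and then that it intertwines the Poisson brackets, both reducing to finitely many checks on the chosen generators. For the algebra statement, recall from the proofs of Lemma~\ref{fck} and Theorem~\ref{main 1} that $\gr(\Theta_k)\equiv\xi_k$ modulo the subalgebra of $S(\zz_{n,m})$ generated by $\bar{\iota}(\gl_n)$, $\bar{\iota}(V_n\oplus V_n^*)$ and $\xi_{k+1},\ldots,\xi_{m-2}$ (resp. by $\bar{\iota}(\spn_{2n})$, $\bar{\iota}(V_{2n})$ and $\xi_{k+1},\ldots,\xi_{m-1}$). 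Hence $\bar{\Theta}_0,\ldots,\bar{\Theta}_{m-s}$ together with the $\bar{\iota}$-images of bases of $\gl_n\oplus V_n\oplus V_n^*$ (resp. $\spn_{2n}\oplus V_{2n}$) form a set of free polynomial generators of $S(\zz_{n,m})\simeq\CC[\s_{n,m}]$; since the $\z_k$ and a basis of $\gl_n\oplus V_n\oplus V_n^*$ (resp. $\spn_{2n}\oplus V_{2n}$) freely generate $H_m^{\cl}(\g)$ as a commutative algebra, $\bar{\Theta}^{\cl}$ is a well-defined algebra isomorphism. It is moreover graded: under the rule $\deg(\zz_\chi\cap\g(i))=i+2$ one has $\deg\bar{\iota}(\gl_n)=2$, $\deg\bar{\iota}(V_n)=m+1$ and $\deg\bar{\Theta}_k=\deg\xi_k=2(m-k)$, which match the degrees of $\gl_n$, $V_n$, $\z_k$ in $H_m^{\cl}(\gl_n)$ (and likewise in the symplectic case, where the scalar $1/\sqrt2$ does not affect gradings).

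It remains to match the brackets of generators. The routine ones I would dispose of first. The $\z_k$ go to $\pm\bar{\Theta}_k=\gr(\rho(F_{m-k}))$, the image in $\gr_{F_\bullet}\U(\g,e_m)\cong\CC[\s_{n,m}]$ of a Casimir of $\U(\g)$; since $\rho$ lands in the center of $\U(\g,e_m)$ (Theorem~\ref{center}), the $\bar{\Theta}_k$ are Poisson central, matching $\{\z_k,-\}=0$. Next, $\bar{\iota}|_{\gl_n}=\gr(\iota|_{\gl_n})$ and $\iota$ is a Lie algebra embedding whose adjoint action integrates the $Q$-action of Section~\ref{W properties}; passing to associated graded turns this into $\{\bar{\iota}(A),\bar{\iota}(B)\}=\bar{\iota}([A,B])$ and $\{\bar{\iota}(A),\bar{\iota}(v)\}=\bar{\iota}(A\cdot v)$, which is exactly $\bar{\Theta}^{\cl}$ applied to $\{A,B\}$ and $\{A,v\}$ in $H_m^{\cl}(\g)$. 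Finally, for $\g=\gl_n$ the brackets $\{\bar{\iota}(y),\bar{\iota}(y')\}$ and $\{\bar{\iota}(x),\bar{\iota}(x')\}$ land in $\CC[\s_{n,m}]_{2m}$, whose $\wedge^2V_n^*$- and $\wedge^2V_n$-isotypic components vanish by a weight count, so these brackets are zero; in the symplectic case there is no separate relation of this kind.

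The substantive step is the computation of $\{\bar{\iota}(y),\bar{\iota}(x)\}$ (resp. of $\{\bar{\iota}(x),\bar{\iota}(y)\}$ for $x,y\in V_{2n}$) inside $\CC[\s_{n,m}]$. I would carry it out with the Hamiltonian-reduction description of the Slodowy-slice bracket from~\cite[Section 3]{GG}: write a point of $\s_{n,m}$ as $e_m+z$, $z\in\Ker\ad(f_m)$, lift the linear functions attached to $y$ and $x$ to $M$-invariant functions near $\s_{n,m}$, take the Lie--Poisson bracket on $\g^*$, and restrict. Since $e_m$ is a single Jordan block of size $m$ (resp. $2m$), the matrix inverse entering this calculation is a finite geometric series in that block, which is precisely the mechanism producing the resolvents $(x,(1-\tau A)^{-1}y)\det(1-\tau A)^{-1}$ (resp. $\omega(x,(1-\tau^2A^2)^{-1}y)\det(1-\tau A)^{-1}$) governing $\alpha_j$ (resp. $\beta_{2j}$). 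The outcome, after restriction, should be $\alpha_m(\bar{\iota}(y),\bar{\iota}(x))+\sum_{j=0}^{m-2}(-1)^{m-j}\bar{\Theta}_j\,\alpha_j(\bar{\iota}(y),\bar{\iota}(x))$, with $\alpha_j$ evaluated on $\bar{\iota}(\gl_n)\subset S(\zz_{n,m})$; the signs and the identification of the coefficient of $w^j$ with $\bar{\Theta}_j$ are forced by comparison with the residue formula defining the $c_i$ in Section~\ref{Poisson counterparts} and with the description of $\gr(\Theta_j)$ above. The symplectic case is parallel, the $1/\sqrt2$ absorbing the normalization of $\omega$ against the pairing on $V^{\pm}_{2n}$ and the signs being trivial since only even powers occur. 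I expect this Slodowy-slice bracket computation to be the main obstacle; once it is done, one can cross-check the Poisson-central part of $\bar{\Theta}^{\cl}$ against Theorem~\ref{Poisson-center}, which confirms that $\zz_{\mathrm{Pois}}(H_m^{\cl}(\g))$ is freely generated by the $\z_k$ and the elements matching the remaining restricted Casimirs.
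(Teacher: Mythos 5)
Your preliminary reductions — checking that $\bar{\Theta}^{\cl}$ is a graded algebra isomorphism, that $\bar{\Theta}_k$ are Poisson-central, that $\bar{\iota}$ is a Lie-algebra map on $\gl_n$ (resp.\ $\spn_{2n}$) and on the $\gl_n\ltimes V_n$-part, and the weight-count vanishing of $\{\bar{\iota}(y),\bar{\iota}(y')\}$ — mirror what the paper does, and are fine. The divergence is at the ``substantive step,'' and there you leave a genuine gap: you \emph{propose} to compute $\{\bar{\iota}(y),\bar{\iota}(x)\}$ head-on via the Gan--Ginzburg Hamiltonian-reduction recipe (lift to $M$-invariant functions near the slice, take the Lie--Poisson bracket, restrict) and assert that ``the outcome, after restriction, should be'' the wanted expression, with signs ``forced by comparison'' with the residue formula. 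You yourself flag this as ``the main obstacle,'' and it is never actually carried out; the sign determination via the $c_i$ residue formula is also circular-looking, since those $c_i$ enter the Poisson center, not the bracket itself.

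The paper avoids that direct computation entirely, and its workaround is worth internalizing. It first establishes only the \emph{shape} of the bracket, $\bar W(y,x)=\sum_j\bar\eta_j\,\alpha_j(y,x)$ with $\bar\eta_j\in\CC[\bar\Theta_0,\ldots,\bar\Theta_{m-2}]$ of prescribed degree, via the Poisson analogue of the PBW classification (Remark~\ref{PBW_updated}). It then leverages Theorem~\ref{Poisson-center}(a): the element $\tau_1'=\sum x_iy_i+\sum\bar\eta_j\tr S^{j+1}A$ must be Poisson-central, hence of the form $c\,\bar\rho(\wt F_{m+1})+p(\bar\rho(\wt F_2),\ldots,\bar\rho(\wt F_m))$. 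Finally, $\bar\rho(\wt F_{m+1})$ is computed \emph{not} by Poisson-bracket machinery but by elementary linear algebra on the explicit matrix form of $\s_{n,m}$ (Lemmas~\ref{block 1},~\ref{block 2} and equation~(\ref{3})). Matching coefficients then forces $c=(-1)^m$, $\bar\eta_m=1$, $\bar\eta_{m-1}=0$, $\bar\eta_j=(-1)^{m-j}\bar\Theta_j$, $p=0$. In other words, the Poisson center is the \emph{engine} of the paper's argument, whereas you relegate it to an after-the-fact cross-check. If you want to stay with your direct route you would have to carry out the BRST/Dirac reduction computation on the size-$m$ Jordan block in detail (including the correction terms to the naive restriction), which is substantially heavier than the trace-restriction computation of Lemma~\ref{block 2}; or you could adopt the paper's ``unknown coefficients $+$ central element'' pincer, which replaces the bracket computation by a restriction-of-Casimirs computation that is purely multilinear algebra.
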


 Claims~\ref{regular1} and~\ref{regular2} follow from this theorem.

\begin{rem}
 An alternative proof of Claims~\ref{regular1} and~\ref{regular2} is
 based on the recent result of~\cite{LNS} about the universal
 Poisson deformation of $\s\cap \N$ (here $\N$ denotes the nilpotent cone of the Lie algebra $\g$). We find this argument a
 bit overkilling (besides, it does not provide precise formulas in the Poisson case).
\end{rem}

\begin{proof}[Proof of Theorem~\ref{main 2}]
$\ $

\noindent
 (a)  The Poisson algebra $S(\zz_{n,m})$ is equipped both with the Kazhdan grading and
 the internal grading $\Gr'$. In particular, the same reasoning as in the proof of Theorem~\ref{main 1}(a) implies:
 $$\{\bar{\iota}(A), \bar{\iota}(B)\}=\bar{\iota}([A,B]),\ \{\bar{\iota}(A),\bar{\iota}(y)\}=\bar{\iota}(A(y)),\
 \{\bar{\iota}(A),\bar{\iota}(x)\}=\bar{\iota}(A(x)).$$

 We set $\bar{W}(y,x):=\{\bar{\iota}(y),\bar{\iota}(x)\}$ for all $y\in V_n, x\in
 V_n^*$. Arguments analogous to those used in the proof of Theorem~\ref{main 1}(a)
 imply an existence of polynomials $\bar{\eta}_i\in \CC[\bar{\Theta}_0,\ldots,\bar{\Theta}_{m-2}]$ such that
 $\bar{W}(y,x)=\sum_j\bar{\eta}_j\alpha_j(y,x)$ and $\deg(\bar{\eta}_j(\bar{\Theta}_0,\ldots,\bar{\Theta}_{m-2}))=2(m-j)$.

  Combining this with Theorem~\ref{Poisson-center}(a) one gets that
 $$\tau'_1=\sum_i x_iy_i+\sum_{j}\bar{\eta}_j\tr S^{j+1}A$$
 is a Poisson-central element of $S(\zz_{n,m})\cong \CC[\s_{n,m}]$.

  Let $\bar{\rho}:\zz_{\mathrm{Pois}}(\CC[\ssl_{n+m}])\to \zz_{\mathrm{Pois}}(\CC[\s_{n,m}])$ be the restriction homomorphism.
 The Poisson analogue of Theorem~\ref{center} (which is, actually, much simpler) states that $\bar{\rho}$ is an isomorphism.
 In particular, $\tau'_1=c \bar{\rho}(\wt{F}_{m+1})+p(\bar{\rho}(\wt{F}_2),\ldots,\bar{\rho}(\wt{F}_m))$ for some $c\in \CC$ and a polynomial~$p$.

 Note that $\bar{\rho}(\wt{F}_i)=\bar{\Theta}_{m-i}$ for all $2\leq i\leq m$. Let us now express $\bar{\rho}(\wt{F}_{m+1})$ via the generators of
 $S(\zz_{n,m})$. First, we describe explicitly the slice $\s_{n,m}$. It
 consists of the following elements:
 $$\left\{e_m+\sum_{i,j\leq n}x_{i,j}E_{i,j}+\sum_{i\leq n}u_iE_{i,n+1}+\sum_{i\leq n}v_iE_{n+m,i}+
                   \sum_{k\leq m-1}w_kf_m^k-\frac{\sum_{i\leq n}x_{ii}}{m}\sum_{n<j\leq n+m}E_{jj}\right\},$$
 which can be explicitly depicted as follows:
 $$\s_{n,m}=\left\{X=\left(\begin{array}{ccccccccc}
             x_{1,1} & x_{1,2} & \cdots & x_{1,n} & u_1 & 0 & 0 & \cdots &0\\
             x_{2,1} & x_{2,2} & \cdots & x_{2,n} & u_2 & 0 & 0 & \cdots &0\\
             \vdots & \vdots & \ddots & \vdots & \vdots & \vdots & \vdots & \ddots & \vdots \\
             x_{n,1} & x_{n,2} & \cdots & x_{n,n} & u_n & 0 & 0 & \cdots &0\\
             0      &  0     & \cdots &    0   & \lambda & 1 & 0 & \cdots &0\\
             0      &  0     & \cdots &    0   &   \star  & \lambda & 1 & \cdots &0\\
             \vdots & \vdots & \ddots & \vdots & \vdots & \vdots & \vdots & \ddots & \vdots \\
             0      &   0    & \cdots &   0    & \star   & \star & \star  & \cdots & 1\\
             v_1    &   v_2  & \cdots & v_n    & \star   & \star & \star  & \cdots  & \lambda \\
           \end{array}\right)\right\}
$$
 For $X\in \ssl_{n+m}$ of the above form let us define $X_1\in \gl_n,\ X_2\in \gl_m$ by
 $$X_1:=\sum_{i,j\leq n}x_{i,j}E_{i,j},\ X_2:=e_m+\sum_{k\leq m-1}w_kf_m^k-\frac{x_{11}+\cdots+x_{nn}}{m}\sum_{n<j\leq n+m}E_{jj},$$
 that is, $X_1$ and $X_2$ are the left-up $n\times n$ and right-down $m\times m$ blocks of $X$, respectively.

  The following result is straightforward:
\begin{lem}\label{block 1}
Let $X, X_1, X_2$ be as above. Then:

\noindent
 (i) For $2\leq k\leq m:$ $\wt{F}_k(X)=\tr \Lambda^k (X_1)+\tr \Lambda^{k-1} (X_1) \tr \Lambda^1 (X_2)+\ldots+\tr \Lambda^k (X_2)$.

\medskip
\noindent
 (ii) We have $\wt{F}_{m+1}(X)=(-1)^m\sum u_iv_i+\tr \Lambda^{m+1} (X_1)+\tr \Lambda^{m}(X_1) \tr \Lambda^1 (X_2)+\ldots+\tr \Lambda^{m+1} (X_2)$.
\end{lem}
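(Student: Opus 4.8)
The claim is a purely linear-algebraic identity about the coefficients of the characteristic polynomial of a block-upper-triangular matrix, so the plan is to forget about $\s_{n,m}$ and Lie theory entirely and work with the shape of $X$ recorded in the displayed picture. The essential structural observation is that $X$ is block upper triangular: the bottom-left $m\times n$ corner of $X$ is zero \emph{except} for the single row $(v_1,\dots,v_n)$ sitting in row $n+m$ (the last row), and the top-right $n\times m$ corner is zero except for the single column $(u_1,\dots,u_n)^{\mathsf t}$ in column $n+1$. So $\det(1+zX)$ is \emph{almost} the product $\det(1+zX_1)\det(1+zX_2)$, the defect being exactly the two stray vectors $u$ and $v$. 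Write $1+zX = M + z\,(u\text{-part}) + z\,(v\text{-part})$ where $M=\diag(1+zX_1,\,1+zX_2)$ in block form, and expand the determinant.

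The cleanest way to organize the expansion is the matrix-determinant/Schur-complement route. Partition $1+zX$ into the $n\times n$ block $P:=1+zX_1$, the $m\times m$ block $S:=1+zX_2$, the $n\times m$ block $B:=z\,u\,\delta_1^{\mathsf t}$ (nonzero only in its first column, equal to $zu$), and the $m\times n$ block $C:=z\,\delta_m\,v^{\mathsf t}$ (nonzero only in its last row, equal to $zv^{\mathsf t}$). Then $\det(1+zX)=\det(S)\det(P - B S^{-1} C)$. Now $B S^{-1} C = z^2\, u\,(e_1^{\mathsf t} S^{-1} e_m)\, v^{\mathsf t}$ is a rank-one $n\times n$ matrix, so by the matrix-determinant lemma $\det(P-BS^{-1}C)=\det(P)\bigl(1 - z^2 (e_1^{\mathsf t}S^{-1}e_m)\, v^{\mathsf t}P^{-1}u\bigr)$. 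Since we only ever read off coefficients up to $z^{m+1}$, and the correction term already carries a factor $z^2$, the factors $P^{-1}=(1+zX_1)^{-1}$ and $S^{-1}=(1+zX_2)^{-1}$ inside it may be replaced by $1$ up to the order that matters: the correction contributes, modulo $z^{m+2}$, exactly $-z^2\, v^{\mathsf t}u\cdot\det(P)\det(S)\cdot(e_1^{\mathsf t}S^{-1}e_m + O(z))$, and one checks that $e_1^{\mathsf t}(1+zX_2)^{-1}e_m$ begins at order $z^{m-1}$ with coefficient $(-1)^{m-1}$ (this is where the explicit nilpotent shape $e_m=\sum E_{n+j,n+j+1}$, $f_m=\sum j(m-j)E_{n+j+1,n+j}$ enters: $e_1^{\mathsf t}e_m^{\,k}e_m$-type bookkeeping, or directly that the $(1,m)$ cofactor of $1+zX_2$ is $z^{m-1}$ up to sign). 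Hence the whole correction equals $(-1)^m\bigl(\sum u_iv_i\bigr)z^{m+1}+O(z^{m+2})$, while $\det(P)\det(S)=\det(1+zX_1)\det(1+zX_2)=\bigl(1+\sum_{k\ge1}\tr\Lambda^k(X_1)z^k\bigr)\bigl(1+\sum_{k\ge1}\tr\Lambda^k(X_2)z^k\bigr)$ contributes the convolution sums $\sum_{a+b=k}\tr\Lambda^a(X_1)\tr\Lambda^b(X_2)$ in each degree $k$.

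Reading off the coefficient of $z^k$: for $2\le k\le m$ the correction is invisible (it starts only in degree $m+1$), giving part (i); in degree $m+1$ the correction adds precisely $(-1)^m\sum u_iv_i$, giving part (ii). I would present this by first doing the Schur-complement/rank-one reduction in a line or two, then isolating the single scalar $e_1^{\mathsf t}(1+zX_2)^{-1}e_m$ and computing its leading term from the explicit form of $X_2$ (equivalently, of $f_m$ and $e_m$). The only place any real care is needed — the "main obstacle," though it is minor — is pinning down that leading coefficient $(-1)^{m-1}$ of $e_1^{\mathsf t}(1+zX_2)^{-1}e_m$ with the correct sign; everything else is a bookkeeping of determinant expansions that the word "straightforward" in the statement is entitled to. An equally valid alternative is a direct cofactor expansion of $\det(1+zX)$ along column $n+1$ and row $n+m$ simultaneously, grouping the terms by how many of the stray entries $u_i,v_j$ they use (zero of them gives the block product, one of them vanishes by the triangular structure, two of them gives the $\sum u_iv_i$ term with a sign from moving past the $(m-1)\times(m-1)$ superdiagonal block of $X_2$); I would mention this as the bare-hands version but carry out the Schur-complement argument as the clean one.
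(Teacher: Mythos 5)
The paper states Lemma~\ref{block 1} without proof, labelling it ``straightforward,'' so there is no paper argument to compare against. Your Schur-complement proof is correct and is precisely the kind of bookkeeping the authors are appealing to. The key computation checks out: writing $1+zX=\begin{pmatrix}P&B\\C&S\end{pmatrix}$ with $P=1+zX_1$, $S=1+zX_2$, $B=zue_1^{\mathsf t}$, $C=ze_mv^{\mathsf t}$, one gets $\det(1+zX)=\det(P)\det(S)-z^2\,C_{m1}(S)\det(P)\,v^{\mathsf t}P^{-1}u$, and the $(m,1)$-cofactor $C_{m1}(S)=(-1)^{m+1}\det(\text{minor}_{m1}(S))$ is $(-1)^{m-1}z^{m-1}$ because that minor (rows $1,\dots,m-1$, columns $2,\dots,m$ of $S$) is \emph{lower} triangular with diagonal entries all equal to $z$, as $X_2$ vanishes above its superdiagonal. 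Thus the correction starts at $z^{m+1}$ with coefficient $(-1)^m\sum u_iv_i$, while $\det(P)\det(S)$ supplies the convolution $\sum_{a+b=k}\tr\Lambda^a(X_1)\tr\Lambda^b(X_2)$ in each degree $k$; reading off degrees $2\le k\le m$ and $k=m+1$ gives (i) and (ii). One small indexing slip: you refer to ``the $(1,m)$ cofactor'' where what actually appears in $(S^{-1})_{1m}=\text{cofactor}_{m1}(S)/\det S$ is the $(m,1)$-cofactor, but the determinant you compute is the right one, so this is purely notational. Your alternative bare-hands remark is also right: a permutation of $\{1,\dots,n+m\}$ can use the $u$-column and the $v$-row either not at all (giving $\det(P)\det(S)$) or exactly once each (using it once but not the other forces a forbidden crossing of the block structure), and the once-each terms contribute only from degree $m+1$ onward.
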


 Combining both statements of this lemma with the standard equality
\begin{equation}\label{1}
  \sum_{0\leq j\leq l}(-1)^j\tr S^{l-j}(X_1)\tr \Lambda^j (X_1)=0,\ \ \ \ \ \forall l\geq 1,
\end{equation}
 we obtain the following result:

\begin{lem}\label{block 2}
  For any $X\in \s_{n,m}$ we have:
 \begin{equation}\label{2}
  \wt{F}_{m+1}(X)=(-1)^m\sum u_iv_i+\sum_{2\leq j\leq m}(-1)^{m-j}\wt{F}_j(X)\tr S^{m+1-j}(X_1)+(-1)^m\tr  S^{m+1}(X_1).
\end{equation}
\end{lem}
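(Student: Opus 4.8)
The plan is to deduce Lemma~\ref{block 2} from Lemma~\ref{block 1} by a short generating-function manipulation, with identity~(\ref{1}) entering as an inversion formula. Set $P_1(z):=\det(1+zX_1)=\sum_{a\ge 0}\tr\Lambda^a(X_1)z^a$ and $P_2(z):=\det(1+zX_2)=\sum_{b\ge 0}\tr\Lambda^b(X_2)z^b$; note that $\tr\Lambda^b(X_2)=0$ for $b>m$ since $X_2\in\gl_m$. Introduce also the power series $h(z):=\det(1+zX_1)^{-1}$. Identity~(\ref{1}) is precisely the statement that $h(z)=\sum_{l\ge 0}(-1)^l\tr S^l(X_1)z^l$, equivalently $P_1(z)h(z)=1$. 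Rewriting parts~(i) and~(ii) of Lemma~\ref{block 1} in generating-function form, and observing that the formula of part~(i) also holds trivially for $k=0$ (both sides $1$) and for $k=1$ (both sides equal $\tr X_1+\tr X_2=\tr X=0$, as $X\in\ssl_{n+m}$), we obtain
$$\det(1+zX)\ \equiv\ P_1(z)P_2(z)+(-1)^m\bigl(\sum u_iv_i\bigr)z^{m+1}\pmod{z^{m+2}}.$$
The extra term $(-1)^m\bigl(\sum u_iv_i\bigr)z^{m+1}$ is exactly the off-block correction appearing in Lemma~\ref{block 1}(ii); it is the origin of the bare summand $(-1)^m\sum u_iv_i$ in~(\ref{2}), so it must be carried through the computation rather than absorbed into a block-diagonal product.

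Next I would multiply the displayed congruence by $h(z)=P_1(z)^{-1}$ to obtain $\det(1+zX)\,h(z)\equiv P_2(z)+(-1)^m\bigl(\sum u_iv_i\bigr)z^{m+1}h(z)\pmod{z^{m+2}}$, and then compare the coefficients of $z^{m+1}$ on both sides. On the left this coefficient equals $\sum_{k=0}^{m+1}(-1)^{m+1-k}\wt F_k(X)\tr S^{m+1-k}(X_1)$; on the right it equals $\tr\Lambda^{m+1}(X_2)+(-1)^m\sum u_iv_i=(-1)^m\sum u_iv_i$, using $\tr\Lambda^{m+1}(X_2)=0$ and $h(0)=1$. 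Isolating the $k=m+1$ term, which is $\wt F_{m+1}(X)$ because $\tr S^0(X_1)=1$, and discarding the $k=0$ and $k=1$ terms via $\wt F_0(X)=1$ and $\wt F_1(X)=0$, this single equation rearranges directly into~(\ref{2}).

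I do not expect a genuine obstacle: granting both parts of Lemma~\ref{block 1}, the argument is formal bookkeeping plus the one application of~(\ref{1}). The only delicate point is the asymmetry between $\wt F_{m+1}(X)$ and the degree-$(m+1)$ coefficient of $P_1(z)P_2(z)$, which differ by precisely $(-1)^m\sum u_iv_i$; one must therefore avoid substituting a block-product formula for $\wt F_{m+1}$ itself. Should one prefer to sidestep generating functions, the same result follows summand by summand: substitute $\wt F_j(X)=\sum_{a+b=j}\tr\Lambda^a(X_1)\tr\Lambda^b(X_2)$ for $0\le j\le m$ into the right-hand side of~(\ref{2}), reorganize the resulting triple sum by the value of $b$, and use~(\ref{1}) to collapse each inner alternating sum over $a$; only the $b=m+1$ contribution survives, and it vanishes because $X_2$ is an $m\times m$ matrix.
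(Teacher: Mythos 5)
Your proof is correct and uses the same ingredients as the paper — Lemma~\ref{block 1}(i),(ii) together with identity~(\ref{1}) — merely repackaged as a single generating-function identity $\det(1+zX)\equiv P_1(z)P_2(z)+(-1)^m\bigl(\sum u_iv_i\bigr)z^{m+1}\pmod{z^{m+2}}$, from which one extracts the coefficient of $z^{m+1}$ after multiplying by $P_1(z)^{-1}$. The paper instead first inverts part~(i) to express each $\tr\Lambda^k(X_2)$ ($k\le m$) through the $\wt F_j$'s, substitutes into part~(ii), and collapses the double sum via~(\ref{1}); this is exactly the ``summand-by-summand'' variant you sketch at the end, so the two arguments are interchangeable bookkeeping of the same idea.
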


\begin{proof}[Proof of Lemma~\ref{block 2}]
  Lemma~\ref{block 1}(i) and equality (1) imply by induction on $k$:
 $$\tr \Lambda^k (X_2)=\wt{F}_k(X)-\tr S^1 (X_1) \wt{F}_{k-1}(X)+\tr S^2 (X_1) \wt{F}_{k-2}(X)-\ldots+(-1)^k\tr S^k(X_1)\wt{F}_0(X),$$
 for all $k\leq m$, where $\wt{F}_1(X):=0,\ \wt{F}_0(X):=1$.

  Those equalities together with Lemma~\ref{block 1}(ii) imply:
 $$\wt{F}_{m+1}(X)=(-1)^m\sum{u_iv_i}+\sum_{0\leq j\leq m}{\sum_{0\leq k< m+1-j}(-1)^k\tr \Lambda^{m+1-j-k}(X_1)\tr S^k(X_1)\wt{F}_j(X)}.$$
 According to (1), we have $\sum_{k=0}^{m-j}(-1)^k\tr \Lambda^{m+1-j-k}(X_1)\tr S^k(X_1)=(-1)^{m-j}\tr S^{m+1-j}(X_1)$.
 Recalling our convention $\wt{F}_1(X)=0,\ \wt{F}_0(X)=1$, we get (2).
\end{proof}

 Identifying $\CC[\s_{n,m}]$ with $S(\zz_{n,m})$ we get
\begin{equation}\label{3}
 \bar{\rho}(\wt{F}_{m+1})=(-1)^m\left(\sum x_iy_i+\tr S^{m+1}A+\sum_{2\leq j\leq m}(-1)^j\bar{\Theta}_{m-j}\tr S^{m+1-j}A\right).
\end{equation}

 Substituting this into $\tau'_1=c \bar{\rho}(\wt{F}_{m+1})+p(\bar{\Theta}_0,\ldots,\bar{\Theta}_{m-2})$ with $\bar{\Theta}_{m-1}:=0,\ \bar{\Theta}_m:=1$, we get
 $$p(\bar{\Theta}_0,\ldots,\bar{\Theta}_{m-2})=(1-(-1)^mc)\sum_i {x_iy_i}+\sum_{0\leq j\leq
 m} {(\bar{\eta}_j(\bar{\Theta}_0,\ldots,\bar{\Theta}_{m-2})-(-1)^jc\bar{\Theta}_j)\tr S^{j+1}A}.$$

 Hence $c=(-1)^m$ and $p(\bar{\Theta}_0,\ldots,\bar{\Theta}_{m-2})=\sum_{0\leq j\leq
 m} {(\bar{\eta}_j(\bar{\Theta}_0,\ldots,\bar{\Theta}_{m-2})-(-1)^{m-j}\bar{\Theta}_j)\tr
 S^{j+1}A}$.
  According to Remark~\ref{PBW_updated}, the last equality is equivalent to
 $$\bar{\eta}_m=1,\ \bar{\eta}_{m-1}=0,\ \bar{\eta}_j(\bar{\Theta}_0,\ldots,\bar{\Theta}_{m-2})=(-1)^{m-j}\bar{\Theta}_j,\ \ \ \forall\ 0\leq j\leq m-2,\ \ p=0.$$

 This implies the statement.

\medskip
\noindent
 (b) Analogously to the previous case and the proof of Theorem~\ref{main 1}(b) we have:
 $$\{\bar{\iota}(A),\bar{\iota}(B)\}=\bar{\iota}([A,B]),\ \{\bar{\iota}(A),\bar{\iota}(y)\}=\bar{\iota}(A(y)),\
   \{\bar{\iota}(x),\bar{\iota}(y)\}=\sum \bar{\eta}_j\beta_{2j}(x,y),$$
 for some polynomials $\bar{\eta}_j\in \CC[\bar{\Theta}_0,\ldots,\bar{\Theta}_{m-1}]$ such that
 $\deg(\bar{\eta}_j(\bar{\Theta}_0,\ldots,\bar{\Theta}_{m-1}))=4(m-j)$.

  Due to Theorem~\ref{Poisson-center}(b), we get
 $\tau'_1:=\sum_{i=1}^{2n}\{\wt{Q}_1,y_i\}y_i^*-2\sum_j \bar{\eta}_j\tr S^{2j+2}A\in \zz_{\mathrm{Pois}}(S(\zz_{n,m}))$.
 In particular, $\tau'_1=c \bar{\rho}(\wt{F}_{m+1})+p(\bar{\rho}(\wt{F}_1),\ldots,\bar{\rho}(\wt{F}_m))$ for some $c\in \CC$ and a polynomial~$p$.

  Note that $\bar{\rho}(\wt{F}_k)=\bar{\Theta}_{m-k}$ for $1\leq k\leq m$. Let us now express $\bar{\rho}(\wt{F}_{m+1})$ via the generators of
 $S(\zz_{n,m})$. First, we describe explicitly the slice $\s_{n,m}$. It
 consists of the following elements:
 $$\{e_m+\bar{\iota}(X_1)+\sum_{i\leq n}v_iU_{i,n+1}+\sum_{i\leq n}v_{n+i}U_{n+2m+i,n+1}+\sum_{k\leq m}w_kf_m^{2k-1}|X_1\in \spn_{2n},\ v_i, v_{n+i}, w_k\in \CC\},$$
 where $U_{i,j}:=E_{i,j}+(-1)^{i+j+1}E_{2n+2m+1-j,2n+2m+1-i}\in \spn_{2n+2m}$.
 For $X\in \spn_{2n+2m}$ as above, we define
 $X_2:=e_m+\sum_{k\leq m}w_kf_m^{2k-1}\in \spn_{2m}$, viewed as the \emph{centered} $2m\times 2m$ block of $X$.

  Analogously to~(\ref{3}), we get
\begin{equation}\label{4}
 \bar{\rho}(\wt{F}_{m+1})=\frac{1}{4}\sum_{i=1}^{2n}\{\wt{Q}_1,y_i\}y_i^*-\tr S^{2m+2}A-\sum_{0\leq j\leq m-1}\bar{\Theta}_{j}\tr S^{2j+2}A.
\end{equation}

 Comparing the above two formulas for $\tau'_1$, we get the following equality:
$$\sum_{i=1}^{2n}\{\wt{Q}_1,y_i\}y_i^*-2\sum_j \bar{\eta}_j\tr S^{2j+2}A=c\cdot \bar{\rho}(\wt{F}_{m+1})+p(\bar{\Theta}_0,\ldots,\bar{\Theta}_{m-1}).$$
 Arguments analogous to those used in part (a) establish
 $$c=4,\ p=0,\ \bar{\eta}_m=2,\ \bar{\eta}_{j}=2\bar{\Theta}_j,\ \ \forall\ j<m.$$

 Part (b) follows.
\end{proof}

\begin{rem}
 Recalling the standard convention $\U(\g,0)=\U(\g)$ and Example~\ref{basic1}, we see that Theorem~\ref{main 1}(a) (as well as Theorem~\ref{main 2}(a))
 obviously holds for $m=1$ with $e_1:=0\in \ssl_{n+1}$.
\end{rem}

  The results of Theorems~\ref{main 1} and~\ref{main 2} can be naturally generalized to
 the case of the universal infinitesimal Hecke algebras of $\so_n$.
 However, this requires reproving some basic results about the latter algebras, similar to those of~\cite{EGG,DT}, and is discussed separately in~\cite{T}.

%%%%%%%%%%%%%%%%%%%%%%%%%%%%%%%%%%%%%%%%%%%%%%%%%%%%%%%%%%%%%%%%%%%%%%%%%%%%%%%%%%%%%%%%%%%%%%%%%%%%%%%%%%%%%%%%%%%%%%%%%%%%%%%%%%%%%%%%%%%%%%%%%%%%%%

$\ $

\section{Consequences}

  In this section we use Theorem~\ref{main 1} to get some new (and recover some old) results about the algebras of interest.
 On the $W$-algebra side, we get presentations of $U(\ssl_n,e_m)$ and $U(\spn_{2n},e_m)$ via generators and relations
 (in the latter case there was no presentation known for $m>1$).
 We get much more results about the structure and the representation theory of infinitesimal Cherednik algebras using the corresponding results on $W$-algebras.

 Also we determine the isomorphism from Theorem~\ref{main 1}(a) basically explicitly.

\subsection{Centers of $H_m(\gl_n)$ and $H_m(\spn_{2n})$}\label{centers}

   We set $s=2$ for $\g=\ssl_N$ and $s=1$ for $\g=\spn_{2N}$.
 Recall the elements $\{\wt{F}_i\}_{i=s}^{N}$, where $\deg(\wt{F}_i)=(3-s)i$.
 These are the free generators of the Poisson center $\zz_{\mathrm{Pois}}(S(\g))$.
 The Lie algebra $\q=\zz_\g(e,h,f)$ from Section~\ref{W properties} equals $\gl_n$ for $(\g,e)=(\ssl_{n+m},e_m)$ and
 $\spn_{2n}$ for $(\g,e)=(\spn_{2n+2m},e_m)$. Thus $\{\wt{Q}_j\}$ from Section~\ref{Poisson counterparts} are the free generators of $\zz_{\mathrm{Pois}}(S(\q))$
 and $Q_j:=\Sym(\wt{Q}_j)$ are the free generators of $Z(\U(\q))$.

 The following result is a straightforward generalization of formulas~(\ref{3}) and~(\ref{4}):

\begin{prop}
\label{technical}
  There exist $\{b_i\}_{i=1}^{n}\in S(\g)^{\ad \g}[\bar{\rho}(\wt{F}_s),\ldots,\bar{\rho}(\wt{F}_m)]$ such that:
 $$\bar{\rho}(\wt{F}_{m+i})\equiv s_{n,m}\tau_i+b_i\mod \CC[\bar{\rho}(\wt{F}_s),\ldots,\bar{\rho}(\wt{F}_{m+i-1})],\ \ \ \forall\ 1\leq i\leq n, $$
 where $s_{n,m}=(-1)^m$ for $\g=\gl_n$ and $s_{n,m}=1/4$ for $\g=\spn_{2n}$.
\end{prop}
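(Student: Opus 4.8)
The plan is to compute the restrictions $\bar\rho(\wt F_{m+i})$ of the invariants to $\s_{n,m}$ explicitly, generalizing formulas~(\ref{3}) and~(\ref{4}), and to read off $\tau_i$ from each of them. I spell out $\g=\ssl_{n+m}$ in detail; the symplectic case is entirely parallel, with $f_m^{2k-1}$ in place of $f_m^k$, the symplectic bookkeeping of the proof of Theorem~\ref{main 1}(b), and the constant $\tfrac14$ in place of $(-1)^m$. Throughout, $[z^k](\cdot)$ denotes the coefficient of $z^k$.

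First I would write a generic $X\in\s_{n,m}$, as displayed in the proof of Theorem~\ref{main 2}(a), in block form: the left-up $n\times n$ block is $X_1\in\gl_n$ (the $\q$-coordinates), the block occupying rows and columns $n+1,\dots,n+m$ is $X_2=e_m+\sum_{k\le m-1}w_kf_m^k-\tfrac{\tr X_1}{m}I_m$, and the remaining off-diagonal entries couple these two blocks only through column $n+1$ (the $u_i$'s, in rows $1,\dots,n$) and row $n+m$ (the $v_i$'s, in columns $1,\dots,n$). Writing $1+zX$ as the block matrix with diagonal blocks $1+zX_1,\,1+zX_2$ and off-diagonal blocks $zU,\,zV$ — where $U$ is supported on its first column and $V$ on its last row — the Schur complement formula gives $\det(1+zX)=\det(1+zX_1)\det\!\big((1+zX_2)-z^2V(1+zX_1)^{-1}U\big)$. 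Since $V(1+zX_1)^{-1}U=\big(v^{\mathrm T}(1+zX_1)^{-1}u\big)E_{m,1}$ is a scalar multiple of a single elementary matrix, expanding this determinant in its $(m,1)$-entry and invoking Cramer's rule — the relevant minor of $1+zX_2$ is lower triangular with $z$'s on the diagonal, since $X_2$ has $1$'s on the superdiagonal and nothing strictly above, so $\mathrm{adj}(1+zX_2)_{1,m}=(-1)^{m+1}z^{m-1}$ — the factor $\det(1+zX_2)$ cancels out of the correction term, yielding the clean identity
\[
\det(1+zX)=\det(1+zX_1)\det(1+zX_2)+(-1)^{m}\,z^{m+1}\,v^{\mathrm T}\,\mathrm{adj}(1+zX_1)\,u .
\]
For $i=1$ this is precisely Lemmas~\ref{block 1} and~\ref{block 2}.

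Next I would take the coefficient of $z^{m+i}$. The second summand contributes $(-1)^m[z^{i-1}]\big(v^{\mathrm T}\mathrm{adj}(1+zX_1)u\big)$. On the other hand, $\partial\wt Q_i/\partial A_{aj}=[z^{i-1}]\mathrm{adj}(1+zA)_{ja}$, so the rule $\{A,y\}=A(y)$ in $H_m^{\cl}(\gl_n)$ gives $\{\wt Q_i,y_j\}=\sum_a(\partial\wt Q_i/\partial A_{aj})y_a$ and hence $\tau_i=\sum_{a,j}(\partial\wt Q_i/\partial A_{aj})x_jy_a$. Applying the Poisson isomorphism $\bar\Theta^{\cl}$ of Theorem~\ref{main 2}(a) — which identifies $A$ with $X_1$ and the coordinate monomials $x_jy_a$ with the off-diagonal monomials $u_jv_a$ (the very identification already used to obtain formula~(\ref{3})) — turns $\tau_i$ into $[z^{i-1}]\big(v^{\mathrm T}\mathrm{adj}(1+zX_1)u\big)$. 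Therefore
\[
\bar\rho(\wt F_{m+i})=(-1)^m\,\tau_i+[z^{m+i}]\big(\det(1+zX_1)\det(1+zX_2)\big),
\]
the coefficient of $\tau_i$ being $(-1)^m=s_{n,m}$, manifestly independent of $i$ since it is read straight off the factored determinant.

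It remains to put $b_i:=[z^{m+i}]\big(\det(1+zX_1)\det(1+zX_2)\big)$ into the form asserted in the statement. Exactly as in the passage from Lemma~\ref{block 1} to Lemma~\ref{block 2}, I would use Lemma~\ref{block 1}(i) together with the Newton-type identity~(\ref{1}) to express each $\tr\Lambda^k(X_2)$ with $k\le m$ through $\bar\rho(\wt F_s),\dots,\bar\rho(\wt F_m)$ and the complete symmetric functions $\tr S^{\bullet}(X_1)$ of the $\q$-block; the values $\tr\Lambda^k(X_2)$ with $m<k<m+i$ are then eliminated by substituting the formulas for $\bar\rho(\wt F_k)$ already established for smaller indices (induction on $i$), which reintroduces only $\tau_{k-m}$ with $k-m<i$ — themselves, by the inductive hypothesis, expressible through $\bar\rho(\wt F_j)$ with $j\le m+i-1$ and through $\tr S^{\bullet}(X_1)$ — together with lower $\bar\rho(\wt F_j)$'s. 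Collecting terms, every $\bar\rho(\wt F_j)$ with $m\le j\le m+i-1$ is pushed into $\CC[\bar\rho(\wt F_s),\dots,\bar\rho(\wt F_{m+i-1})]$, and what is left is a polynomial $b_i$ in $\bar\rho(\wt F_s),\dots,\bar\rho(\wt F_m)$ whose coefficients are the invariants $\tr S^{\bullet}(X_1)$, i.e. an element of the ring in the statement. I expect this last reorganisation — ensuring that no unwanted $\bar\rho(\wt F_j)$ survives inside $b_i$ — to be the only delicate point: the determinant computation itself is short, and the constant $s_{n,m}$ drops out for free. In the symplectic case the rank-one collapse occurs in the centered $2m\times 2m$ block, the same steps give the off-diagonal term as $\tfrac14\tau_i$ with $\tau_i=\sum_j\{\wt Q_i,y_j\}y_j^{*}$ now built from the symplectic dual bases, and the factor $\tfrac14$ traces to the rescaling $\bar\Theta^{\cl}(y)=\bar\iota(y)/\sqrt2$ of Theorem~\ref{main 2}(b); all the other steps are identical.
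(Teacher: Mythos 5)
Your proof is correct and is exactly the ``straightforward generalization of formulas~(3) and~(4)'' that the paper has in mind (the paper gives no proof of Proposition~\ref{technical} beyond that phrase): you derive the one-line master identity
$\det(1+zX)=\det(1+zX_1)\det(1+zX_2)+(-1)^m z^{m+1}\,v^{\mathrm T}\mathrm{adj}(1+zX_1)\,u$
via the Schur complement and the matrix-determinant lemma, extract the $z^{m+i}$-coefficient, and recognize the rank-one contribution $(-1)^m[z^{i-1}](v^{\mathrm T}\mathrm{adj}(1+zX_1)u)$ as $s_{n,m}\tau_i$ through Jacobi's formula and the identification of the slice coordinates $u_j,v_a$ with $\bar\iota(x_j),\bar\iota(y_a)$ — which is literally the computation the paper performs for $i=1$ in Lemma~\ref{block 1} and formula~(\ref{3}), now done uniformly in $i$.

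One small remark: your last paragraph worries about eliminating $\tr\Lambda^k(X_2)$ for $m<k<m+i$ by an induction in $i$. There is nothing to eliminate: $X_2$ has size $m\times m$ (resp.\ $2m\times 2m$), so $\tr\Lambda^k(X_2)=0$ for $k>m$ (resp.\ $k>2m$) identically, and
$[z^{m+i}]\bigl(\det(1+zX_1)\det(1+zX_2)\bigr)=\sum_{k\le m}\tr\Lambda^{\,m+i-k}(X_1)\,\tr\Lambda^{k}(X_2)$
already lies in $S(\q)^{\ad\q}[\bar\rho(\wt F_s),\dots,\bar\rho(\wt F_m)]$ by the very rewriting used in the proof of Lemma~\ref{block 2}. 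So your $b_i$ satisfies the conclusion on the nose, with no need to reduce modulo $\CC[\bar\rho(\wt F_s),\dots,\bar\rho(\wt F_{m+i-1})]$ — that extra freedom in the statement is there for convenience of use elsewhere, not because any remainder actually appears.
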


 Define $t_k\in H_m(\gl_n)$ by $t_k:=\sum_{i=1}^n x_i [Q_k,y_i]$ and $t_k\in H_m(\spn_{2n})$ by $t_k:=\sum_{i=1}^{2n}[Q_k,y_i]y_i^*$.
 Combining Proposition~\ref{technical}, Theorems~\ref{center},~\ref{main 1} with
 $\gr(Z(U(\g,e)))=\zz_{\mathrm{Pois}}(\CC[\s])$ we get

\begin{cor}\label{center_explicit}
 For $\g$ being either $\gl_n$ or $\spn_{2n}$,
there exist $C_1,\ldots,C_n\in Z(\U(\g))[\z_0,\ldots,\z_{m-s}]$,
such that the center $Z(H_m(\g))$ is a polynomial algebra in free
generators $\{\z_i\}\cup\{t_j+C_j\}_{j=1}^{n}$.
\end{cor}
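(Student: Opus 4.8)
The plan is to transport everything through the isomorphism $\bar\Theta\colon H_m(\g)\iso \U(\g',e_m)$ of Theorem~\ref{main 1}, where $\g'=\ssl_{n+m}$ or $\spn_{2n+2m}$. By Theorem~\ref{center}, the map $\rho\colon Z(\U(\g'))\to Z(\U(\g',e_m))$ is an isomorphism, and $Z(\U(\g'))$ is a polynomial algebra in the free generators $F_s,\ldots,F_{n+m}$ (for $\g'=\ssl_{n+m}$) or the analogous generators for $\spn_{2n+2m}$. Since $\bar\Theta(\z_i)$ lies in $\CC[\Theta_i,\ldots,\Theta_{m-s}]$ with $\Theta_j=\rho(F_{m-j})$ (up to the rescaling dictated by Claims~\ref{regular1},~\ref{regular2}), the elements $\z_0,\ldots,\z_{m-s}$ in $H_m(\g)$ correspond, via $\bar\Theta\circ\rho$, to a triangular change of the generators $F_s,\ldots,F_m$ of the center, so they are part of a free generating set of $Z(H_m(\g))$. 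It remains to produce $n$ more free generators corresponding to $\rho(F_{m+1}),\ldots,\rho(F_{m+n})$.

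First I would apply Proposition~\ref{technical}: under $\gr$, the element $\bar\rho(\wt F_{m+i})$ equals $s_{n,m}\tau_i+b_i$ modulo the algebra generated by the lower $\bar\rho(\wt F_j)$, $j\leq m+i-1$, where $\tau_i=\sum x_k\{\wt Q_i,y_k\}$ (resp.\ $\sum\{\wt Q_i,y_k\}y_k^*$) is the symbol of $t_i=\sum x_k[Q_i,y_k]$ (resp.\ $\sum[Q_i,y_k]y_k^*$). Since $\gr Z(\U(\g',e_m))=\zz_{\mathrm{Pois}}(\CC[\s])$ (which follows from Theorem~\ref{gr} together with $\gr$ commuting with the restriction map, as the generators $\wt F_j{}_{\mid\s}$ are algebraically independent of the correct degrees), I get, pulling back through $\bar\Theta^{-1}$, that $\bar\Theta^{-1}(\rho(F_{m+i}))$ is a central element of $H_m(\g)$ whose symbol is $s_{n,m}\tau_i+b_i$ modulo the subalgebra generated by $\z_0,\ldots,\z_{m-s}$ and the symbols of $\bar\Theta^{-1}(\rho(F_{m+j}))$, $j<i$. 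Rescaling by $s_{n,m}^{-1}$ and writing $C_i\in Z(\U(\g))[\z_0,\ldots,\z_{m-s}]$ for the correction term (note $b_i$ is a polynomial in $\bar\rho(\wt F_s),\ldots,\bar\rho(\wt F_m)$, i.e.\ in the $\gr$ of the central elements already corresponding to $\z_\bullet$, so it lifts to such a $C_i$ after adjusting by lower $t_j$'s; this may require an induction on $i$ to absorb the cross terms $\bar\rho(\wt F_{m+j})$, $j<i$, into combinations of the $t_j+C_j$), I conclude that $t_i+C_i$ is a central element of $H_m(\g)$ with $\gr(t_i+C_i)\equiv\tau_i$ modulo the subalgebra generated by $\z_\bullet$ and $\tau_1,\ldots,\tau_{i-1}$.

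Finally I would assemble the generators: the symbols $\{\z_j\}\cup\{\gr(t_i+C_i)\}$ generate $\zz_{\mathrm{Pois}}(H_m^{\cl}(\g))$ freely — this is exactly Theorem~\ref{Poisson-center} combined with the triangularity just established (the stated free generators there are $\z_0,\ldots,\z_{m-s},\tau_1+c_1,\ldots,\tau_n+c_n$, and $\gr(t_i+C_i)$ differs from $\tau_i+c_i$ only by terms in the lower generators). Since $\gr Z(H_m(\g))\supseteq \zz_{\mathrm{Pois}}(H_m^{\cl}(\g))=\CC[\{\z_j\}\cup\{\gr(t_i+C_i)\}]$, and on the other hand $Z(H_m(\g))=\bar\Theta^{-1}(Z(\U(\g',e_m)))=\bar\Theta^{-1}(\rho(Z(\U(\g'))))$ is generated by $\{\bar\Theta^{-1}\rho(F_j)\}_{j}$, whose symbols all lie in that polynomial algebra, we get $\gr Z(H_m(\g))=\CC[\{\z_j\}\cup\{\gr(t_i+C_i)\}]$, and a standard filtered-graded argument lifts this to the statement: $Z(H_m(\g))$ is freely generated by $\{\z_j\}_{j=0}^{m-s}\cup\{t_i+C_i\}_{i=1}^n$.

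The main obstacle is the bookkeeping in the middle step: Proposition~\ref{technical} only controls $\bar\rho(\wt F_{m+i})$ modulo lower central elements, so to get honest elements $C_i\in Z(\U(\g))[\z_\bullet]$ (rather than correction terms living in the larger center of $\U(\g',e_m)$) one must run an induction on $i$, at each stage subtracting off polynomial combinations of the previously constructed $t_j+C_j$ and of the $\z$'s to kill the cross terms. Verifying that these subtractions can be carried out with coefficients that are genuinely central in $\U(\g)$ — i.e.\ that the ambiguity modulo $\CC[\bar\rho(\wt F_s),\ldots,\bar\rho(\wt F_{m+i-1})]$ never forces terms outside $Z(\U(\g))[\z_\bullet]$ — is where the $W$-algebra center isomorphism (Theorem~\ref{center}) is essential and is the one place the argument is not purely formal.
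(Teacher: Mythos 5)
Your proposal is correct and follows exactly the paper's route: the paper's proof of this corollary is literally the one-line assertion that the statement follows by combining Proposition~\ref{technical}, Theorems~\ref{center} and~\ref{main 1}, and the identity $\gr Z(\U(\g',e_m))=\zz_{\mathrm{Pois}}(\CC[\s])$, which are precisely the ingredients you assemble. You have also correctly identified the one non-formal point — running the induction in $i$ to keep the correction terms $C_i$ inside $Z(\U(\g))[\z_0,\ldots,\z_{m-s}]$ rather than in the full center of the $W$-algebra — which the paper suppresses.
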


 Considering the quotient of $H_m(\g)$ by the ideal $(\z_0-a_0,\ldots,\z_{m-s}-a_{m-s})$ for any $a_i\in \CC$,
we see that the center of the standard infinitesimal Cherednik
algebra $H_a(\g)$ contains a polynomial subalgebra
$\CC[t_1+c_1,\ldots,t_n+c_n]$ for some $c_j\in Z(\U(\g))$.

 Together with~\cite[Theorems 5.1 and 7.1]{DT} this yields:

\begin{cor}\label{tik_reprove}
 We actually have $Z(H_a(\g))=\CC[t_1+c_1,\ldots,t_n+c_n]$.
\end{cor}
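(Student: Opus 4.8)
The plan is to deduce the claimed equality $Z(H_a(\g)) = \CC[t_1+c_1,\ldots,t_n+c_n]$ from two facts: the inclusion $\CC[t_1+c_1,\ldots,t_n+c_n] \subseteq Z(H_a(\g))$ established just above (via specialization of Corollary~\ref{center_explicit}), and the description of the \emph{full} center of $H_a(\g)$ coming from \cite[Theorems 5.1 and 7.1]{DT}. So the argument is essentially a dimension/transcendence-degree comparison combined with an integral-closure type argument, carried out on the associated graded level and then lifted.

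\textbf{Step 1: Passing to the associated graded.} I would equip $H_a(\g)$ with the standard filtration (the $N$-th standard filtration for $N$ large, or simply the order filtration with $\deg V = 1$, $\deg \g = 0$), so that $\gr H_a(\g) \cong \U(\g)\ltimes S(V)$ by the PBW property. The leading term of $t_j + c_j$ is the Poisson-central element $\tau_j$ of $H^{\cl}_a(\g)$ appearing in Theorem~\ref{Poisson-center} (after specializing the $\z_i$), since $\gr(c_j) \in S(\g)^{\ad\g}$ sits in strictly lower filtration degree than $\gr(t_j) = \tau_j$ if the filtration is chosen correctly, or more precisely $\gr(t_j+c_j) = \tau_j + (\text{lower order central term})$, which still generates the same graded subalgebra together with the rest. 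Meanwhile, by \cite[Theorems 5.1 and 7.1]{DT}, the Poisson center $\zz_{\mathrm{Pois}}(H^{\cl}_a(\g))$ is the polynomial algebra on $\tau_1+c_1^{\cl},\ldots,\tau_n+c_n^{\cl}$ (the specialized versions of the $\tau_k+c_k$ of Theorem~\ref{Poisson-center}). Since $\gr Z(H_a(\g)) \subseteq \zz_{\mathrm{Pois}}(\gr H_a(\g)) = \zz_{\mathrm{Pois}}(H^{\cl}_a(\g))$ and the graded subalgebra generated by the $\gr(t_j+c_j)$ is exactly this polynomial algebra (the leading terms $\tau_j$ are algebraically independent and generate $\zz_{\mathrm{Pois}}(H^{\cl}_a(\g))$ over the subalgebra of invariants, which themselves lie in lower degree and hence are already captured), we get $\gr\bigl(\CC[t_1+c_1,\ldots,t_n+c_n]\bigr) = \gr Z(H_a(\g))$.

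\textbf{Step 2: Lifting.} From $\CC[t_1+c_1,\ldots,t_n+c_n] \subseteq Z(H_a(\g))$ and the equality of the associated graded pieces in Step 1, a standard filtered-algebra argument (if $\gr A' = \gr A$ for $A' \subseteq A$ filtered, then $A' = A$, proved by induction on filtration degree) gives $\CC[t_1+c_1,\ldots,t_n+c_n] = Z(H_a(\g))$. One subtlety to address is that \cite{DT} works with the \emph{Poisson} center of the classical algebra, and one needs that $\gr$ of the (associative) center injects into the Poisson center of the associated graded Poisson algebra --- this is automatic since the commutator of a central element with anything is zero, so its symbol Poisson-commutes with all symbols.

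\textbf{Main obstacle.} The delicate point is the precise matching of leading terms: I must verify that $\gr(t_j + c_j)$ really equals $\tau_j$ plus terms lying in the subalgebra generated by \emph{lower} $\gr(\wt F_\bullet)$'s, so that the graded subalgebra they generate is all of $\zz_{\mathrm{Pois}}(H^{\cl}_a(\g))$ and not a proper piece. This is where Proposition~\ref{technical} and the explicit formulas~(\ref{3}),~(\ref{4}) do the work: they show $\bar\rho(\wt F_{m+j}) \equiv s_{n,m}\tau_j + b_j$ modulo the subalgebra generated by the lower invariants $\bar\rho(\wt F_s),\ldots,\bar\rho(\wt F_{m+j-1})$, and under $\bar\Theta^{\cl}$ these $\bar\rho(\wt F_i)$ with $i\le m$ become (up to scalars) the $\z$-variables, hence constants after specialization, while the $c_j$ are built precisely from the $b_j$. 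So after specializing the central parameters the leading term of $t_j+c_j$ is $s_{n,m}\tau_j$ up to lower-degree invariant corrections, which is exactly what is needed. The rest is bookkeeping with gradings.
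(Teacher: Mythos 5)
Your overall strategy is exactly the one the paper leaves implicit when it writes ``Together with [DT, Theorems 5.1 and 7.1] this yields'': combine the inclusion $\CC[t_1+c_1,\ldots,t_n+c_n]\subseteq Z(H_a(\g))$ coming from Corollary~\ref{center_explicit} with the Poisson-level bound, matching Hilbert series after passing to the associated graded. So the route is the right one. But Step 1, as written, has two conflations that need to be untangled.

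First, with the order filtration ($\deg V=1$, $\deg\g=0$) one gets $\gr H_a(\g)\cong \U(\g)\ltimes S(V)$, which is \emph{not} commutative, so talking about its Poisson center does not make sense; you must use the $(m+1)$-standard (Kazhdan-type) filtration with $\deg\g=2$, $\deg V=m+1$, for which $\gr H_a(\g)$ is indeed commutative. Second, even with that filtration, $\gr H_a(\g)$ is \emph{not} $H^{\cl}_a(\g)$ as a Poisson algebra: the parameters $a_j$ are scalars, so under $\gr$ only the top-degree piece $\alpha_m$ of $\z_a$ survives, and $\gr H_a(\g)\cong H^{\cl}_{\alpha_m}(\g)$ rather than $H^{\cl}_{\z_a}(\g)$. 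This matters because the chain of inclusions has to read $\gr Z(H_a(\g))\subseteq\zz_{\mathrm{Pois}}(\gr H_a(\g))$, and the right-hand object is $\zz_{\mathrm{Pois}}(H^{\cl}_{\alpha_m}(\g))$, not $\zz_{\mathrm{Pois}}(H^{\cl}_{\z_a}(\g))$. The argument is nonetheless recoverable, because [DT, Theorems 5.1 and 7.1] hold for \emph{every} admissible deformation parameter $\z$, in particular for $\z=\alpha_m$, and always produce a polynomial algebra on $n$ homogeneous generators of the same degrees $2j+2m$ as $\gr(t_j+c_j)$. With that correction the Hilbert-series comparison in Step~2 does force the inclusions to be equalities, and your ``Main obstacle'' paragraph (using Proposition~\ref{technical} and equations~(\ref{3}),~(\ref{4}) to control the leading terms) is the right bookkeeping to see that the $\gr(t_j+c_j)$ really do generate all of $\zz_{\mathrm{Pois}}(\gr H_a(\g))$ via a triangular change of generators.
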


 For $\g=\gl_n$ this is~\cite[Theorem 1.1]{T1}, while for $\g=\spn_{2n}$ this is~\cite[Conjecture 7.1]{DT}.

\subsection{Symplectic leaves of Poisson infinitesimal Cherednik algebras}\label{poisson_cher}
   By Theorem~\ref{main 2}, we get an identification of the full Poisson-central
 reductions of the algebras $\CC[\s_{n,m}]$ and $H_m^{\cl}(\gl_n)$ or $H_m^{\cl}(\spn_{2n})$.
 As an immediate consequence we obtain the following proposition, which answers a question raised in~\cite{DT}:
\begin{prop}
 Poisson varieties corresponding to arbitrary full central reductions of Poisson infinitesimal Cherednik algebras $H_\z^{\cl}(\g)$ have finitely many symplectic leaves.
\end{prop}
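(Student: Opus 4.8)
The plan is to transport the problem, via Theorem~\ref{main 2}, to a statement about fibres of the adjoint quotient map restricted to a Slodowy slice. First I would note that, for $\g=\gl_n$ or $\spn_{2n}$ and $\z$ of length $m$, the Poisson isomorphism $\bar\Theta^{\cl}$ of Theorem~\ref{main 2} identifies $H_m^{\cl}(\g)$ with $S(\zz_{n,m})\cong\CC[\s_{n,m}]$, where $\s_{n,m}$ is the Slodowy slice of $(\g',e_m)$ with $\g'=\ssl_{n+m}$ or $\spn_{2n+2m}$. Combining this with the Poisson analogue of Theorem~\ref{center} (the assertion, used in the proof of Theorem~\ref{main 2}, that the restriction $\bar\rho\colon\zz_{\mathrm{Pois}}(\CC[\g'])\to\zz_{\mathrm{Pois}}(\CC[\s_{n,m}])$ is an isomorphism), with Theorem~\ref{Poisson-center} and with Proposition~\ref{technical}, one checks that $\bar\Theta^{\cl}$ carries the Poisson center of $H_m^{\cl}(\g)$ onto $\bar\rho(\CC[\g']^{G'})=\zz_{\mathrm{Pois}}(\CC[\s_{n,m}])$, and hence that every full central reduction of $H_\z^{\cl}(\g)$ is isomorphic, as a Poisson variety, to a fibre
$$
X_{\chi'}\;:=\;\s_{n,m}\cap\pi^{-1}(\chi'),\qquad\chi'\in\g'/\!\!/G',
$$
of the restriction to $\s_{n,m}$ of the adjoint quotient map $\pi\colon\g'\to\g'/\!\!/G'$.

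It then suffices to bound the number of symplectic leaves of an arbitrary fibre $X_{\chi'}$, which I would do using two standard inputs. First, the fibre $\pi^{-1}(\chi')\subset\g'$ is a finite union of adjoint orbits $\mathcal{O}_1\sqcup\cdots\sqcup\mathcal{O}_N$: it consists of the elements of $\g'$ with a prescribed semisimple part up to conjugacy, and a reductive Lie algebra has only finitely many nilpotent orbits. Second, by the theory of Slodowy slices (see \cite{GG}; the Poisson bracket on $\CC[\s_{n,m}]$ being the one of Theorem~\ref{gr}) the slice $\s_{n,m}$ is a Poisson transversal inside $(\g')^{*}\cong\g'$ endowed with its Lie--Poisson structure, whose symplectic leaves are the coadjoint orbits; by the general description of the symplectic foliation of a Poisson transversal (the transverse Poisson structure of Weinstein), the symplectic leaves of $\s_{n,m}$ are precisely the connected components of the intersections $\s_{n,m}\cap\mathcal{O}$ with adjoint orbits $\mathcal{O}$, each such intersection being a smooth locally closed subvariety. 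Putting these together, $X_{\chi'}=\bigsqcup_{i=1}^{N}(\s_{n,m}\cap\mathcal{O}_i)$ is a finite disjoint union of smooth subvarieties of finite type, each of which has finitely many connected components, and every such component is a symplectic leaf of $\s_{n,m}$. Hence $X_{\chi'}$, and therefore every full central reduction of a Poisson infinitesimal Cherednik algebra $H_\z^{\cl}(\g)$, has finitely many symplectic leaves.

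I expect the only genuine obstacle to be the second input: verifying that the Gan--Ginzburg Poisson structure on $\CC[\s_{n,m}]$ coincides with the transverse Poisson structure, so that the standard Poisson--geometric description of the symplectic leaves of a Poisson transversal applies. The reduction carried out in the first paragraph is a formal consequence of Theorem~\ref{main 2} together with the center computations already available, and the finiteness of adjoint orbits in a single fibre of $\pi$ is classical. (One could instead invoke flatness of $\pi|_{\s_{n,m}}$ and the existence of a symplectic resolution of the zero fibre $\s_{n,m}\cap\mathcal{N}_{\g'}$, but the fibres over nonzero $\chi'$ still require the transversality statement, so I would argue directly as above.)
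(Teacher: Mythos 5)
Your argument is correct and follows exactly the route the paper intends: the authors state the proposition as an ``immediate consequence'' of Theorem~\ref{main 2}, which identifies full Poisson-central reductions of $H_\z^{\cl}(\g)$ with full central reductions of $\CC[\s_{n,m}]$, and the remaining input (that a fibre of the adjoint quotient restricted to the Slodowy slice has finitely many leaves, via transversality of the slice to adjoint orbits and finiteness of orbits in each fibre) is the standard background fact they leave implicit. Your concern about the Gan--Ginzburg bracket agreeing with the transverse Poisson structure is unnecessary: that agreement is precisely what~\cite{GG} establishes, so the foliation by components of $\s_{n,m}\cap\mathcal{O}$ is already available.
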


\subsection{Analogue of Kostant's theorem}

  As another immediate consequence of Theorem~\ref{main 1} and discussions from Section~\ref{centers}, we
  get a generalization of the following classical result:

\begin{prop}
 (a) The infinitesimal Cherednik algebras $H_\z(\g)$ are free over their
centers.

\noindent (b) The full central reductions of $\gr H_\z(\g)$ are
normal, complete intersection integral domains.
\end{prop}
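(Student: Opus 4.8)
The plan is to deduce both parts from the $W$-algebra side, where these properties are classical consequences of Theorem~\ref{gr}, Theorem~\ref{center}, and Proposition~\ref{technical}, and then transport them along the isomorphisms of Theorem~\ref{main 1}. First I would observe that by Theorem~\ref{main 1}, $H_\z(\g)$ is obtained from the universal algebra $H_m(\g)$ by the central specialization $\z_i \mapsto a_i$, and $H_m(\g) \cong \U(\g', e_m)$ as filtered algebras, where $\g' = \ssl_{n+m}$ or $\spn_{2n+2m}$. Hence it suffices to prove the analogous statements for the full central reduction $\U(\g',e_m)\slash(\text{center})$ of the $W$-algebra, i.e.\ for the quotient by the maximal ideal of $Z(\U(\g',e_m))$ generated by the images under $\rho$ of a homogeneous generating set of $Z(\U(\g'))$ together with the central variables $\z_i$ (which, after specialization, disappear). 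By Theorem~\ref{gr}, $\gr_{F_\bullet} \U(\g',e_m) \cong \CC[\s_{n,m}]$ as graded Poisson algebras, so $\gr$ of the full central reduction is $\CC[\s_{n,m}]\slash(\bar{\rho}(\wt F_s),\ldots,\bar{\rho}(\wt F_{n+m}))$, the scheme-theoretic intersection $\s_{n,m}\cap \N'$ of the Slodowy slice with the nilpotent cone $\N'$ of $\g'$.

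For part (b), the key geometric input is that $\s_{n,m}\cap \N'$ is a normal, irreducible, complete intersection variety. Normality and irreducibility of Slodowy slices to the nilpotent cone are standard (this is Kostant's transversality together with normality of the nilpotent cone, combined with the fact that the slice is a transverse section; see e.g.\ the references in the excerpt, or the general fact that $\N'$ is normal and the slice is a smooth transverse section so $\s\cap\N'$ is again normal). That it is a complete intersection follows by a dimension count: $\dim \s_{n,m} = \dim \zz_\chi$, the defining ideal is generated by the $\dim(\g'/\!/\,G') = \mathrm{rk}\,\g'$ restricted invariants $\bar\rho(\wt F_j)$, and $\dim(\s\cap\N') = \dim\zz_\chi - \mathrm{rk}\,\g'$ (since $\N' = $ fiber of the adjoint quotient over $0$ and the slice maps onto the quotient). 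Being a complete intersection that is generically reduced and normal, it is Cohen--Macaulay hence an integral domain once irreducibility is known. This gives (b) for the graded algebra, and the ungraded statement is precisely the claim.

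For part (a), freeness of $\U(\g',e_m)$ over its center $Z(\U(\g',e_m)) \cong Z(\U(\g'))[\z]$ (using Theorem~\ref{center} and Corollary~\ref{center_explicit}) follows from a standard graded-lifting argument: $\gr Z(\U(\g',e_m)) = \zz_{\mathrm{Pois}}(\CC[\s_{n,m}]) = \CC[\bar\rho(\wt F_j)]$ is a polynomial ring, and $\CC[\s_{n,m}]$ is free — indeed flat, hence free since the base is polynomial and everything is nonnegatively graded — over this subring because $\s_{n,m}\cap\N'$ is a complete intersection of the expected dimension, so the $\bar\rho(\wt F_j)$ form a regular sequence, equivalently the generic fiber dimension is constant, and a finitely generated graded flat module over a graded polynomial ring is free. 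Lifting this back through the Kazhdan filtration: if $\gr A$ is free over $\gr Z$ on homogeneous generators, then choosing lifts of those generators exhibits $A$ as free over $Z$. Specializing $\z_i\mapsto a_i$ preserves freeness over the remaining center, giving (a) for $H_\z(\g)$.

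The main obstacle is establishing that $\s_{n,m}\cap\N'$ is a genuine complete intersection — equivalently that the restricted invariants $\bar\rho(\wt F_j)$ form a regular sequence in $\CC[\s_{n,m}]$. Here I would lean on Proposition~\ref{technical} and the explicit formulas~(\ref{3}),~(\ref{4}): these exhibit $\bar\rho(\wt F_{m+i})$ as $s_{n,m}\tau_i$ plus terms in the lower invariants, which together with the block-decomposition Lemmas~\ref{block 1} and~\ref{block 2} makes the regular-sequence property checkable directly from the known structure of $\CC[\s_{n,m}] \cong H_m^{\cl}(\g)$ and its Poisson center (Theorem~\ref{Poisson-center}); alternatively one invokes the general result that a nilpotent Slodowy slice is always a complete intersection. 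Once irreducibility and the complete-intersection property are in hand, normality reduces by Serre's criterion to a codimension-one regularity check, which holds because the singular locus of $\N'$ has codimension $\geq 2$ and the slice is transverse.
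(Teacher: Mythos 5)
Your proposal takes essentially the same route as the paper: the paper merely says the proposition is ``an immediate consequence of Theorem~\ref{main 1} and discussions from Section~\ref{centers},'' meaning that the isomorphism $H_m(\g)\cong \U(\g',e_m)$ reduces both claims to the classical analogue of Kostant's theorem for $W$-algebras, which you then unwind. The filtered-lifting argument for freeness and the identification of the full central reduction of $\gr$ with a fiber of the adjoint quotient restricted to the Slodowy slice is exactly the intended logic.

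Two small remarks. First, the full central reduction of $\gr H_\z(\g)$ corresponds, after transport, not only to $\s_{n,m}\cap\N'$ but to the intersection of $\s_{n,m}$ with an \emph{arbitrary} fiber of the adjoint quotient $\g'\to\g'/\!\!/G'$, since the character of the center need not be the one sending all generators to zero; the statement you need (and which holds) is that \emph{all} such fibers are normal, reduced, irreducible complete intersections of the expected dimension. Second, rather than re-deriving these geometric facts from transversality and Serre's criterion, it is cleaner (and what the paper implicitly does) to cite Premet directly: these are precisely the results of \cite[Section~5]{P1}, which also yield that $\CC[\s]$ is free over its Poisson center and that $\U(\g',e)$ is free over $Z(\U(\g',e))$. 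Your sketch of why those facts hold is morally right (Cohen--Macaulay plus miracle flatness plus graded Nakayama, normality via local triviality along the slice), but the paper's ``classical result'' is a pointer to Premet, not an invitation to reprove him.
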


 For $\g=\gl_n$ this is~\cite[Theorem 2.1]{T2}, while for $\g=\spn_{2n}$ this is~\cite[Theorem 8.1]{DT}.

\subsection{Category $\mathcal{O}$ and finite dimensional representations of $H_m(\spn_{2n})$}
The  categories $\mathcal{O}$ for the finite $W$-algebras were first
introduced in~\cite{BGK} and were further studied by the first
author in~\cite{L6}. Namely, recall that we have an embedding
$\mathfrak{q}\subset U(\g,e)$. Let $\mathfrak{t}$ be a Cartan
subalgebra of $\mathfrak{q}$ and set
$\g_0:=\mathfrak{z}_{\g}(\mathfrak{t})$. Pick an integral element
$\theta\in \mathfrak{t}$ such that $\mathfrak{z}_{\g}(\theta)=\g_0$.
By definition, the category $\mathcal{O}$ (for $\theta$) consists of
all finitely generated $U(\g,e)$-modules $M$, where the action of
$\mathfrak{t}$ is diagonalizable with finite dimensional eigenspaces
and, moreover, the set of weights is bounded from above in the sense
that there are complex numbers $\alpha_1,\ldots,\alpha_k$ such that
for any weight $\lambda$ of $M$ there is $i$ with
$\alpha_i-\langle\theta,\lambda\rangle\in \mathbb{Z}_{\leqslant 0}$.
The category $\mathcal{O}$ has analogues of Verma modules,
$\Delta(N^0)$. Here $N^0$ is an irreducible module over the
$W$-algebra $U(\g_0,e)$, where $\g_0$ is the centralizer of
$\mathfrak{t}$. In the cases of interest ($(\g,e)=(\ssl_{n+m},e_m),
(\spn_{2n+2m},e_m)$), we have $\g_0=\gl_m\times \mathbb{C}^{n-1},
\g_0=\spn_{2m}\times \mathbb{C}^n$ and $e$ is principal in $\g_0$.
In this case, the $W$-algebra $U(\g_0,e)$ coincides with the center
of $U(\g_0)$. Therefore $N^0$ is a one-dimensional space, and the
set of all possible $N^0$ is identified, via the Harish-Chandra
isomorphism, with the quotient $\h^*/W_0$, where $\h,W_0$ are a
Cartan subalgebra and the Weyl group of $\g_0$ (we take the quotient
with respect to the dot-action of $W_0$ on $\h^*$). As in the usual
BGG category $\mathcal{O}$, each Verma module has a unique
irreducible quotient, $L(N^0)$. Moreover, the map $N^0\mapsto
L(N^0)$ is a bijection between the set of finite dimensional
irreducible $U(\g_0,e)$-modules, $\h^*/W_0$, in our case, and the
set of irreducible objects in $\mathcal{O}$. We remark that all
finite dimensional irreducible modules lie in $\mathcal{O}$.

One can define a formal character for a module $M\in \mathcal{O}$. The characters of Verma
modules are easy to compute basically thanks to \cite[Theorem 4.5(1)]{BGK}. So to compute the characters
of the simples, one needs to determine the multiplicities of the simples in the Vermas.
This was done in \cite[Section 4]{L6} in the case when $e$ is principal in $\g_0$.
The multiplicities are given by values of certain Kazhdan-Lusztig polynomials at $1$
and so are hard to compute, in general. In particular, one cannot classify finite dimensional
irreducible modules just using those results.

When $\g=\ssl_{n+m}$, a classification of the finite dimensional
irreducible $U(\g,e)$-modules was obtained in \cite{BK2}; this
result is discussed in the next section. When $\g=\spn_{2n+2m}$, one
can describe the finite dimensional irreducible representations
using  \cite[Theorem 1.2.2]{L5}. Namely, the centralizer of $e$ in
$\operatorname{Ad}(\g)$ is  connected. So, according to \cite{L5},
the finite dimensional irreducible $U(\g,e)$-modules are in
one-to-one correspondence with the primitive ideals
$\mathcal{J}\subset U(\g)$ such that the associated variety of
$U(\g)/\mathcal{J}$ is $\overline{\mathbb{O}}$, where we write
$\mathbb{O}$ for the adjoint orbit of $e$. The set of such primitive
ideals is computable (for a fixed central character, those are in
one-to-one correspondence with certain left cells in the
corresponding integral Weyl group), but we will not need details on
that.

One can also describe all $N^0\in \h^*/W_0$ such that   $\dim
L(N^0)<\infty$ when $e$ is principal in $\g_0$. This is done in
\cite[5.1]{L7}. Namely,  choose a representative $\lambda\in \h^*$
of $N^0$ that is {\it antidominant} for $\g_0$ meaning that
$\langle\alpha^\vee,\lambda\rangle\not\in \mathbb{Z}_{>0}$ for any
positive root $\alpha$ of $\g_0$. Then we can consider the
irreducible highest weight module $L(\lambda)$ for $\g$ with highest
weight $\lambda-\rho$. Let $\mathcal{J}(\lambda)$ be its annihilator
in $U(\g)$, this is a primitive ideal that depends only on $N^0$ and
not on the choice of $\lambda$. Then $\dim L(N^0)<\infty$ if and
only if the associated variety of $U(\g)/\mathcal{J}(\lambda)$ is
$\overline{\mathbb{O}}$. The associated variety is computable thanks
to results of \cite{BV_class}; however this computation requires
quite a lot of combinatorics. It seems that one can still give a
closed combinatorial answer for $(\spn_{2n+2m},e_m)$ similar to that
for $(\ssl_{n+m},e_m)$ but we are not going to elaborate on that.

Now let us discuss the infinitesimal Cherednik algebras. In the
$\gl_n$-case the category $\mathcal{O}$ was defined
in~\cite[Definition 4.1]{T1} (see also~\cite[Section 5.2]{EGG}).
Under the isomorphism of Theorem~\ref{main 1}(a), that category
$\mathcal{O}$ basically coincides with its $W$-algebra counterpart.
The classification of finite dimensional irreducible modules and the
character computation in that case was done in \cite{DT}, but the
character formulas for more general simple modules were not known.
For the algebras $H_m(\spn_{2n})$, no category $\mathcal{O}$ was
introduced, in general; the case $n=1$ was discussed in~\cite{Kh}.
The classification of finite dimensional irreducible modules was not
known either.

\subsection{Finite dimensional representations of $H_m(\gl_n)$}
  Let us compare classifications of the finite dimensional
irreducible representations of $\U(\ssl_{n+m},e_m)$ from \cite{BK2}
and $H_a(\gl_n)$ from \cite{DT}.

 In the notation of~\cite{BK2}\footnote{\ In the \emph{loc.cit.}
$\g=\gl_{n+m}$, rather then $\ssl_{n+m}$. Nevertheless, it is not
very crucial since $\gl_{n+m}=\ssl_{n+m}\oplus \CC$.}, a nilpotent
element $e_m\in \gl_{n+m}$ corresponds to the partition
$(1,\ldots,1,m)$ of $n+m$. Let $S_m$ act on $\CC^{n+m}$ by permuting
the last $m$ coordinates. According to~\cite[Theorem 7.9]{BK2},
there is a bijection between the irreducible finite dimensional
representations of $\U(\gl_{n+m},e_m)$ and the orbits of the
$S_m$-action on $\CC^{n+m}$ containing a strictly dominant
representative. An element $\bar{\nu}=(\nu_1,\ldots,\nu_{n+m})\in
\CC^{n+m}$ is called strictly dominant if $\nu_i-\nu_{i+1}$ is a
positive integer for all $1\leq i\leq n$. The corresponding
irreducible $\U(\gl_{n+m},e_m)$-representation is denoted
$L_{\bar{\nu}}$.
  Viewed as a $\gl_n$-module (since $\gl_n=\q\subset \U(\gl_{n+m},e_m)$), $L_{\bar{\nu}}=L'_{\bar{\nu}}\oplus \bigoplus_{i\in I}  L'_{\eta_i}$,
 where $L'_{\eta}$ is the highest weight $\eta$ irreducible $\gl_n$-module, $\bar{\nu}:=(\nu_1,\ldots,\nu_n)$ and $I$ denotes some set of weights $\eta<\bar{\nu}$.

  Let us now recall~\cite[Theorem 4.1]{DT}, which classifies all irreducible finite dimensional representations of the
 infinitesimal Cherednik algebra $H_a(\gl_n)$. They turn out to be parameterized by strictly dominant $\gl_n$-weights
 $\bar{\lambda}=(\lambda_1,\ldots,\lambda_n)$ (that is $\lambda_i-\lambda_{i+1}$ is a positive integer for every $1\leq i<n$),
 for which there exists a positive integer $k$ satisfying $P(\bar{\lambda})=P(\lambda_1,\ldots,\lambda_{n-1},\lambda_n-k)$.
  Here $P$ is a degree $m+1$ polynomial function on the Cartan subalgebra $\h_n$ of all diagonal matrices of $\gl_n$, introduced in~\cite[Section 3.2]{DT}.
 According to~\cite[Theorem 3.2]{DT} (see Theorem~\ref{Casimir}(b) below), we have $P=\sum_{j\geq 0} w_jh_{j+1}$, where both $w_j$ and $h_j$
 are defined in the next section (see the notation preceding Theorem~\ref{Casimir}).

 These two descriptions are intertwined by a natural bijection,
sending $\bar{\nu}=(\nu_1,\ldots,\nu_{n+m})$ to
$\bar{\lambda}:=(\nu_1,\ldots,\nu_n)$, while
$\bar{\lambda}=(\lambda_1,\ldots,\lambda_n)$ is sent to the class of
$\bar{\nu}=(\lambda_1,\ldots,\lambda_n,\nu_{n+1},\ldots,\nu_{n+m})$
with $\{\nu_{n+1},\ldots,\nu_{n+m}\}\cup\{\lambda_n\}$ being the set
of roots of the polynomial
$P(\lambda_1,\ldots,\lambda_{n-1},t)-P(\bar{\lambda})$.

\subsection{Explicit isomorphism in the case $\g=\gl_n$}\label{Section Casimir}
  We compute the images of particular central elements of $H_m(\gl_n)$ and $U(\ssl_{n+m},e_m)$ under the corresponding Harish-Chandra isomorphisms.
 Comparison of these images enables us to determine the isomorphism $\bar{\Theta}$ of Theorem~\ref{main 1}(a) explicitly,
 in the same way as Theorem~\ref{main 2}(a) was deduced.

 Let us start from the following commutative diagram:

\medskip

\setlength{\unitlength}{1cm}
\begin{picture}(4,3)
 \put(4.6,0.5){$U(\gl_n)\otimes U(\ssl_m,e_m)$}
 \put(5,2.5){$U(\ssl_{n+m},e_m)_0$}
 \put(9.5,0.5){$Z(U(\gl_n))\otimes U(\ssl_m,e_m)$}
 \put(9.5,2.5){$Z(U(\ssl_{n+m},e_m))$}
 \put(0,1.5){$U(\ssl_{n+m},e_m)^0$}

 \put(5.8,2.3){\vector (0,-1){1.5}}
 \put(10.5,2.3){\vector (0,-1){1.5}}
 \put(9.3,2.6){\vector (-1,0){2}}
 \put(9.3,0.6){\vector (-1,0){1.5}}
 \put(4.7,2.3){\vector (-4,-1){2.4}}
 \put(2.3,1.3){\vector (4,-1){2.2}}

 \put(8,2.8){$j_{n,m}$}
 \put(8.3,0.8){$j_{n}\otimes \mathrm{Id}$}
 \put(5.4,1.5){$\varpi$}
 \put(9.9,1.5){$\varphi^W$}
 \put(3.3,2.1){$\pi$}
 \put(3.3,1.1){$o$}

\end{picture}
 (Diagram 1)

\medskip
 In the above diagram:

\noindent
 $\bullet$ $U(\ssl_{n+m},e_m)_0$ is the $0$-weight component of $U(\ssl_{n+m},e_m)$ with respect to the grading
 $\Gr$.

\noindent
 $\bullet$ $U(\ssl_{n+m},e_m)^0:=U(\ssl_{n+m},e_m)_0/ (U(\ssl_{n+m},e_m)_0\cap
 U(\ssl_{n+m},e_m)U(\ssl_{n+m},e_m)_{>0})$.

\noindent
 $\bullet$ $\pi$ is the quotient map, while $o$ is an isomorphism, constructed in~\cite[Theorem 4.1]{L6}.
   \footnote{\ Here we actually use the fact that $U(\gl_n)\otimes U(\ssl_m,e_m)$ is the finite $W$-algebra $U(\gl_n\oplus \ssl_m, 0\oplus e_m)$.}

\noindent
 $\bullet$ The homomorphism $\varpi$ is defined as $\varpi:=o\circ \pi$, making the triangle
 commutative.

\noindent
 $\bullet$ The homomorphisms $j_{n+m},\ j_n$ are the natural inclusions.

\noindent
 $\bullet$ The homomorphism $\varphi^W$ is the restriction of $\varpi$ to the center, making the square
 commutative.

\noindent
 $\bullet$ $U(\ssl_m,e_m)\cong Z(U(\ssl_m,e_m))\cong Z(U(\ssl_m))$ since $e_m$ is a principal nilpotent of
 $\ssl_m$.

\medskip
 We have an analogous diagram for the universal infinitesimal Cherednik algebra of $\gl_n$:

\medskip

\setlength{\unitlength}{1cm}
\begin{picture}(4,3.2)
 \put(4,0.5){$U(\gl_n)\otimes \CC[\z_0,\ldots,\z_{m-2}]$}
 \put(4.5,2.5){$H_m(\gl_n)_0$}
 \put(9.5,0.5){$Z(U(\gl_n))\otimes \CC[\z_0,\ldots,\z_{m-2}]$}
 \put(10.4,2.5){$Z(H_m(\gl_n))$}
 \put(0,1.5){$H_m(\gl_n)^0$}

 \put(5,2.3){\vector (0,-1){1.4}}
 \put(11,2.3){\vector (0,-1){1.4}}
 \put(10.1,2.6){\vector (-1,0){4}}
 \put(9,0.6){\vector (-1,0){1}}
 \put(4.5,2.3){\vector (-4,-1){2.9}}
 \put(1.6,1.4){\vector (4,-1){2.3}}

 \put(7.8,2.8){$j'_{n,m}$}
 \put(8.1,0.8){$j_{n}\otimes \mathrm{Id}$}
 \put(4.5,1.5){$\varpi'$}
 \put(10.5,1.5){$\varphi^H$}
 \put(3,2.1){$\pi'$}
 \put(3,1.1){$o'$}
\end{picture}
 (Diagram 2)

\medskip
In the above diagram:

\noindent
 $\bullet$ $H_m(\gl_n)_0$ is the degree $0$ component of $H_m(\gl_n)$ with respect to the grading $\Gr$, defined by setting
  $\deg(\gl_n)=\deg(\z_0)=\ldots=\deg(\z_{m-2})=0,\ \deg(V_n)=1,\
  \deg(V_n^*)=-1$.

\noindent
 $\bullet$ $H_m(\gl_n)^0$ is the quotient of $H_m(\gl_n)_0$ by $H_m(\gl_n)_0\cap
 H_m(\gl_n)H_m(\gl_n)_{>0}$.
           \footnote{\ It is easy to see that $H_m(\gl_n)_0\cap H_m(\gl_n)H_m(\gl_n)_{>0}$ is actually a two-sided ideal of $H_m(\gl_n)_0$.}

\noindent
 $\bullet$ $\pi'$ denotes the quotient map, $o'$ is the natural
 isomorphism, $\varpi':=o'\circ \pi'$.

\noindent
 $\bullet$ The inclusion $j'_{n,m}$ is a natural inclusion of the
 center.

\noindent
 $\bullet$ The homomorphism $\varphi^H$ is the one induced by restricting $\varpi'$ to the
 center.

\medskip

  The isomorphism $\bar{\Theta}$ of Theorem~\ref{main 1}(a) intertwines the gradings $\Gr$,
 inducing an isomorphism $\bar{\Theta}^0:H_m(\gl_n)^0\iso U(\ssl_{n+m},e_m)^0$.
 This provides the following commutative diagram:

\setlength{\unitlength}{1cm}
\begin{picture}(4,3.2)
 \put(0.5,0.5){$Z(U(\gl_n))\otimes \CC[\z_0,\ldots,\z_{m-2}]$}
 \put(1,2.5){$Z(H_m(\gl_n))$}
 \put(9,0.5){$Z(U(\gl_n))\otimes Z(U(\ssl_m))$}
 \put(9.5,2.5){$Z(U(\ssl_{n+m},e_m))$}

 \put(2,2.3){\vector (0,-1){1.4}}
 \put(10.4,2.3){\vector (0,-1){1.4}}

 \put(2.9,2.6){\vector (1,0){6.2}}
 \put(5.4,0.6){\vector (1,0){3.2}}

 \put(6.1,2.8){$\vartheta$}
 \put(7,0.8){$\underline{\vartheta}$}
 \put(1.5,1.5){$\varphi^H$}
 \put(9.8,1.5){$\varphi^W$}
\end{picture}
 (Diagram 3)

\medskip
 In the above diagram:

\noindent
 $\bullet$ The isomorphism $\vartheta$ is the restriction of the isomorphism $\bar{\Theta}$ to the center.

\noindent
 $\bullet$ The isomorphism $\underline{\vartheta}$ is the restriction of the isomorphism $\bar{\Theta}^0$ to the center.

\medskip
  Let $\HC_N$ denote the Harish-Chandra isomorphism $\HC_N:Z(U(\gl_N))\iso \CC[\h_N^*]^{S_N,\bullet}$,
 where $\mathfrak{h}_N\subset \gl_N$ is the Cartan subalgebra consisting of the diagonal matrices and $(S_N,\bullet)$-action arises from the $\rho_N$-\emph{shifted}
 $S_N$-action on $\h_N^*$ with $\rho_N=(\frac{N-1}{2},\frac{N-3}{2},\cdots,\frac{1-N}{2})\in \mathfrak{h}_N^*$.
 This isomorphism has the following property: any central element $z\in Z(U(\gl_N))$ acts on the Verma module $M_{\lambda-\rho_N}$ of $U(\gl_N)$ via $\HC_N(z)(\lambda)$.

  According to Corollary~\ref{center_explicit}, the center $Z(H_m(\gl_n))$ is the polynomial algebra in free generators
 $\{\z_0,\ldots,\z_{m-2},t_1',\ldots,t_n'\}$, where $t_k'=t_k+C_k$.
 In particular, any central element of Kazhdan degree $2(m+1)$ has the form $ct_1'+p(\z_0,\ldots,\z_{m-2})$ for some $c\in \CC$ and $p\in \CC[\z_0,\ldots,\z_{m-2}]$.

  Following~\cite{DT}, we call $t_1'=t_1+C_1$ the Casimir element. \footnote{\ The Casimir element is uniquely defined up to a constant.}
 An explicit formula for $\varphi^H(t_1')$ is provided by~\cite[Theorem 3.1]{DT}, while for any $0\leq k\leq m-2$ we have $\varphi^H(\z_k)=1\otimes \z_k$.

  To formulate main results about the Casimir element $t'_1$, we introduce:

\noindent
 $\bullet$
  the generating series $\z(z)=\sum_{i=0}^{m-2}\z_iz^i+z^m$ (already introduced in Section~\ref{Poisson counterparts}),

\noindent
 $\bullet$
  a unique degree $m+1$ polynomial $f(z)$ satisfying $f(z)-f(z-1)=\partial^n(z^n\z(z))$ and $f(0)=0$,

\noindent
 $\bullet$
  a unique degree $m+1$ polynomial $g(z)=\sum_{i=1}^{m+1}g_iz^i$ satisfying $\partial^{n-1}(z^{n-1}g(z))=f(z)$,

\noindent
 $\bullet$
  a unique degree $m$ polynomial $w(z)=\sum_{i=0}^{m} w_iz^i$ satisfying $f(z)=(2\sinh (\partial/2))^{n-1} (z^n w(z))$,

\noindent
 $\bullet$
  the symmetric polynomials $\sigma_i(\lambda_1,\ldots,\lambda_n)$ via $(u+\lambda_1)\cdots(u+\lambda_n)=\sum \sigma_i(\lambda_1,\ldots,\lambda_n)u^{n-i}$,

\noindent
 $\bullet$
  the symmetric polynomials $h_j(\lambda_1,\ldots,\lambda_n)$ via $(1-u\lambda_1)^{-1}\cdots(1-u\lambda_n)^{-1}=\sum{h_j(\lambda_1,\ldots,\lambda_n)u^j}$,

\noindent
 $\bullet$
  the central element $H_j\in Z(U(\gl_n))$ which is the symmetrization of $\tr S^j (\cdot)\in \CC[\gl_n]\cong S(\gl_n)$.

\medskip
 The following theorem summarizes the main results of~\cite[Section 3]{DT}:

\begin{thm}\label{Casimir}

 (a) \cite[Theorem 3.1]{DT} $\varphi^H(t_1')=\sum_{j=1}^{m+1} H_j\otimes
 g_j$ (where $g_j$ are viewed as elements of $\CC[\z_0,\ldots,\z_{m-2}]$),

 (b) \cite[Theorem 3.2]{DT} $(\HC_n\otimes \mathrm{Id})\circ \varphi^H (t_1')=\sum_{j=0}^{m}{h_{j+1}\otimes w_j}$.
\end{thm}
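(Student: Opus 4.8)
The statement collects \cite[Theorems 3.1 and 3.2]{DT}; passing from a fixed $\z$ to the universal algebra $H_m(\gl_n)$, keeping $\z_0,\dots,\z_{m-2}$ as central variables, changes nothing, so I only indicate the argument one would run.

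The plan is to identify the Casimir element $t_1'=t_1+C_1$ of $H_m(\gl_n)$ explicitly, push it through the map $\varpi'$ of Diagram~2, and then apply $\HC_n$. As noted above, every central element of Kazhdan degree $2(m+1)$ has the form $c\,t_1'+p(\z_0,\dots,\z_{m-2})$, so after normalising the leading term of $t_1'$ to be $\sum_i x_iy_i$ (matching the Poisson computation~(\ref{3})) we may assume $t_1'=\sum_i x_iy_i+C_1$ with $C_1\in Z(U(\gl_n))[\z_0,\dots,\z_{m-2}]$. The map $\varpi'$ is characterised, exactly as the Harish-Chandra-type map of Diagram~1 in the $W$-algebra case, by the action of $H_m(\gl_n)_0$ on the top of the standard modules of category $\mathcal{O}$ for $H_m(\gl_n)$ (modules induced from a $U(\gl_n)$-Verma on which $V_n^*$ acts by zero). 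Since $[y_i,x_i]=\z(y_i,x_i)$ and $\sum_i\bigl(x_i,(1-\tau A)^{-1}y_i\bigr)=\tr\bigl((1-\tau A)^{-1}\bigr)$, so that
$$\sum_i\bigl(x_i,(1-\tau A)^{-1}y_i\bigr)\det(1-\tau A)^{-1}=\sum_{k\ge 0}(k+n)\,(\tr S^k A)\,\tau^k,$$
one finds $\varpi'\bigl(\sum_i x_iy_i\bigr)=-\sum_i\z(y_i,x_i)=-(m+n)H_m-\sum_{k=0}^{m-2}(k+n)\z_kH_k$, whence $\varphi^H(t_1')=-\sum_i\z(y_i,x_i)+C_1$; it remains to determine $C_1$.

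To pin down $C_1$ I would impose $[t_1',x_j]=[t_1',y_j]=0$ (the $\gl_n$-relations being automatic by invariance), rewrite these using $[y_i,x_j]=\z(y_i,x_j)=\sum_k\z_kr_k(y_i,x_j)+r_m(y_i,x_j)$ and the defining expansion of the symbols $r_k=\Sym(\alpha_k)$, and carry out the symmetrisation bookkeeping; the resulting equations are exactly the recursions $f(z)-f(z-1)=\partial^n(z^n\z(z))$, $f(0)=0$, and $\partial^{n-1}(z^{n-1}g(z))=f(z)$ defining $f$ and $g(z)=\sum g_iz^i$, and they fix $C_1$ so that $\varphi^H(t_1')=\sum_{j=1}^{m+1}H_j\otimes g_j$. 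This is part~(a). Part~(b) then follows by applying $\HC_n\otimes\Id$: the eigenvalue of a central element $\Sym(\tr S^k A)$ of $U(\gl_n)$ on a Verma module is the value of $h_k$ at the $\rho_n$-shifted weight, and substituting these shifted complete homogeneous functions turns the recursion for $g$ into the one for $w$, the operator $\partial^{n-1}$ being replaced by $(2\sinh(\partial/2))^{n-1}$ precisely because of the $\rho_n$-shift; this yields $(\HC_n\otimes\Id)\circ\varphi^H(t_1')=\sum_{j=0}^m h_{j+1}\otimes w_j$. Equivalently, (b) is the symmetric-function identity $\sum_j\HC_n(H_j)\,g_j=\sum_j h_{j+1}\,w_j$, which encodes the Harish-Chandra isomorphism for $Z(U(\gl_n))$ together with $\partial^{n-1}(z^{n-1}g(z))=f(z)=(2\sinh(\partial/2))^{n-1}(z^nw(z))$.

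The hard part is the determination of $C_1$: solving the centrality recursion and keeping track of the generating-function bookkeeping relating $\z(z),f,g,w$. This is elementary but delicate — one must follow the $\rho_n$-shift in $\HC_n$ carefully to see the $2\sinh(\partial/2)$ emerge in~(b), and check that $\deg f=\deg g=m+1$ and $\deg w=m$ are compatible with the constraint $\deg(\eta_i)\le 2(m-i)$ of Remark~\ref{PBW_updated}. Everything else — well-definedness of $\varpi'$ and $\varphi^H$, the structure of $Z(H_m(\gl_n))$, and the description of the standard modules — is supplied by Corollary~\ref{center_explicit} and Diagram~2.
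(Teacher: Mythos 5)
The paper offers no proof of Theorem~\ref{Casimir}: it is stated as a direct citation of \cite[Theorems 3.1, 3.2]{DT}, and, as you observe at the outset, the passage to the universal algebra $H_m(\gl_n)$ is formal. Your recognition of this is the whole of what the paper does, so on that count the proposal matches.

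Your additional sketch of the argument behind \cite{DT} is plausible but cannot be matched against anything in this paper. One small point in it is worth flagging: with the conventions of Diagram~2 (the positive $\Gr$-component $H_m(\gl_n)_{>0}$ contains $V_n$, and $\pi'$ kills $H_m(\gl_n)H_m(\gl_n)_{>0}$), the element $\sum_i x_i y_i$ already lies in the ideal being quotiented out, so $\varpi'\bigl(\sum_i x_i y_i\bigr)=0$ rather than $-\sum_i\z(y_i,x_i)$; it is $\sum_i y_i x_i$ that maps to $\sum_i\z(y_i,x_i)$. Thus $\varphi^H(t_1')=C_1$ directly, and the remaining work is entirely in identifying $C_1$. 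Your generating-function identity
\begin{equation*}
\sum_i\bigl(x_i,(1-\tau A)^{-1}y_i\bigr)\det(1-\tau A)^{-1}=\sum_{k\ge 0}(k+n)\,(\tr S^k A)\,\tau^k
\end{equation*}
is correct. Since the sign issue only affects how one partitions $\varphi^H(t_1')$ between the ``leading'' piece and $C_1$, it does not change the stated conclusions (a) or (b); but for a verification against \cite{DT} one should fix this orientation to match theirs.
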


\medskip
  Let $\HC'_N$ denote the Harish-Chandra isomorphism $Z(U(\ssl_N))\iso\CC[\bar{\h}_N^*]^{S_N,\bullet}$, where $\bar{\h}_N$ is the Cartan
 subalgebra of $\ssl_N$, consisting of the diagonal matrices, which can be identified with $\{(z_1,\ldots,z_N)\in \CC^N|\sum z_i=0\}$.
 The natural inclusion $\bar{\h}_N\hookrightarrow \h_N$ induces the map
 $$\h_N^*\to \bar{\h}_N^*:\ (\lambda_1,\ldots,\lambda_N)\mapsto (\lambda_1-\mu,\ldots,\lambda_N-\mu),\ \textit{where}\ \ \mu:=\frac{\lambda_1+\ldots+\lambda_N}{N}.$$

  The isomorphisms $\HC'_{n+m},\HC'_m,\HC_n$ fit into the following commutative diagram:

\setlength{\unitlength}{1cm}
\begin{picture}(4,3.2)
 \put(4.5,2.5){$Z(U(\ssl_{n+m}))$}
 \put(4,0.5){$Z(U(\gl_n))\otimes Z(U(\ssl_m))$}
 \put(10.5,2.5){$\CC[\CC^{n+m-1}]^{S_{n+m},\bullet}$}
 \put(10,0.5){$\CC[\CC^n]^{S_n,\bullet}\otimes \CC[\CC^{m-1}]^{S_m,\bullet}$}
 \put(-0.5,1.5){$Z(U(\ssl_{n+m},e_m))$}

 \put(5.3,2.3){\vector (0,-1){1.4}}
 \put(11.5,2.3){\vector (0,-1){1.4}}
 \put(4.3,2.3){\vector (-3,-1){2}}
 \put(2.3,1.4){\vector (3,-1){1.6}}

 \put(6.7,2.6){\vector (1,0){3.3}}
 \put(7.8,0.6){\vector (1,0){1.9}}

  \put(7.8,2.8){$\HC'_{n+m}$}
  \put(7.9,0.8){$\HC_n\otimes \HC'_m$}
  \put(4.6,1.5){$\bar{\varphi}^W$}
  \put(10.9,1.5){$\varphi^C$}
  \put(3.1,2.1){$\rho$}
  \put(2.8,0.7){$\varphi^W$}

\end{picture}
 (Diagram 4)

\medskip
 In the above diagram:

\noindent
 $\bullet$ $\rho$ is the isomorphism of Theorem~\ref{center}.

\noindent
 $\bullet$ The homomorphism $\bar{\varphi}^W$ is defined as the composition $\bar{\varphi}^W:=\varphi^W\circ \rho$.

\noindent
 $\bullet$ The homomorphism $\varphi^C$ arises from an identification $\CC^n\times \CC^{m-1}\cong \CC^{n+m-1}$ defined by
 $$(\lambda_1,\ldots,\lambda_n,\nu_1,\ldots,\nu_m)\mapsto \left(\lambda_1,\ldots,\lambda_n,\nu_1-\frac{\lambda_1+\ldots+\lambda_n}{m},
    \ldots,\nu_m-\frac{\lambda_1+\ldots+\lambda_n}{m}\right).$$

 In particular, $\varphi^C$ is injective, so that $\varphi^W$ is injective and, hence, $\varphi^H$ is injective.

\medskip
 Define $\bar{\sigma}_k\in \CC[\bar{\h}_N^*]$ as the restriction of $\sigma_k$ to $\CC^{N-1}\hookrightarrow
 \CC^N$. According to Lemma~\ref{block 2},
\begin{equation}\label{added}
 \varphi^C(\bar{\sigma}_{m+1})=(-1)^mh_{m+1}\otimes 1+\sum_{j=2}^m (-1)^{m-j}h_{m+1-j}\otimes 1\cdot \varphi^C(\bar{\sigma}_j).
\end{equation}

  Define $S_k\in Z(U(\ssl_{n+m}))$ by $S_k:=(\HC'_{n+m})^{-1}(\bar{\sigma}_k)$ for all $0\leq k\leq n+m$, so that $S_0=1,\ S_1=0$.
 Similarly, define $T_k\in Z(U(\gl_n))$ as $T_k:=\HC_n^{-1}(h_k)$ for all $k\geq 0$, so that $T_0=1$.

 Equality~(\ref{added}) together with the commutativity of Diagram 4 imply
  $$\bar{\varphi}^W(S_{m+1})=(-1)^mT_{m+1}\otimes 1+ \sum_{j=2}^m (-1)^{m-j}T_{m+1-j}\otimes 1\cdot  \bar{\varphi}^W(S_j).$$

  According to our proof of Theorem~\ref{main 1}(a),
 we have $\bar{\Theta}(A)=\Theta(A)+s \tr A$ for all $A\in \gl_n$, where $s=-\frac{\eta_{m-1}}{(n+m)\eta_m}$.
  In particular, $\underline{\vartheta}^{-1}(X\otimes 1)=\varphi_{-s}(X)\otimes 1$ for all $X\in Z(U(\gl_n))$, where
 $\varphi_{-s}$ was defined in Lemma~\ref{less_parameters}.

  As a consequence, we get:
\begin{equation}\label{hren}
 \underline{\vartheta}^{-1}(\bar{\varphi}^W(S_{m+1}))=(-1)^m\varphi_{-s}(T_{m+1})\otimes 1+
   \sum_{j=2}^m (-1)^{m-j}\varphi_{-s}(T_{m+1-j})\otimes 1\cdot \underline{\vartheta}^{-1}(\bar{\varphi}^W(S_j)).
\end{equation}

 The following identity is straightforward:
\begin{lem}\label{twist}
 For any positive integer $i$ and any constant $\delta\in \CC$ we have
  $$h_i(\lambda_1+\delta,\ldots,\lambda_n+\delta)=\sum_{j=0}^i \binom{n+i-1}{j}h_{i-j}(\lambda_1,\ldots,\lambda_n)\delta^j.$$
\end{lem}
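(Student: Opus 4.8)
The plan is a short generating-function computation, run in the formal power series ring $\CC[\lambda_1,\ldots,\lambda_n][\delta][[u]]$. Recall that the $h_j$ are packaged by $H_\lambda(u):=\prod_{k=1}^n(1-u\lambda_k)^{-1}=\sum_{j\geq 0}h_j(\lambda_1,\ldots,\lambda_n)u^j$. First I would rewrite the generating function of the shifted tuple $(\lambda_1+\delta,\ldots,\lambda_n+\delta)$ by pulling $\delta$ out of each linear factor,
$$1-u(\lambda_k+\delta)=(1-u\delta)\Bigl(1-\tfrac{u}{1-u\delta}\,\lambda_k\Bigr),$$
which is legitimate since $1-u\delta$ is a unit. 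Setting $v:=u/(1-u\delta)\in u\,\CC[\delta][[u]]$, this gives
$$\sum_{i\geq 0}h_i(\lambda_1+\delta,\ldots,\lambda_n+\delta)\,u^i=(1-u\delta)^{-n}H_\lambda(v)=\sum_{l\geq 0}h_l(\lambda_1,\ldots,\lambda_n)\,\frac{u^l}{(1-u\delta)^{n+l}}.$$

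Next I would expand each factor $(1-u\delta)^{-(n+l)}$ by the negative binomial series, $(1-u\delta)^{-(n+l)}=\sum_{p\geq 0}\binom{n+l+p-1}{p}\delta^p u^p$; the resulting double sum is locally finite in each degree of $u$, so it may be reindexed freely. Reading off the coefficient of $u^i$, writing $i=l+p$ and setting $j:=p$ (so $l=i-j$), one obtains
$$h_i(\lambda_1+\delta,\ldots,\lambda_n+\delta)=\sum_{j=0}^i\binom{n+(i-j)+j-1}{j}h_{i-j}(\lambda_1,\ldots,\lambda_n)\,\delta^j=\sum_{j=0}^i\binom{n+i-1}{j}h_{i-j}(\lambda_1,\ldots,\lambda_n)\,\delta^j,$$
which is exactly the asserted identity.

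I do not expect any genuine obstacle: the only points requiring (entirely routine) justification are that the substitution $v=u/(1-u\delta)$ is a well-defined continuous endomorphism of $\CC[\lambda_1,\ldots,\lambda_n][\delta][[u]]$ and that the interchange of the two summations is valid degree by degree. A purely combinatorial derivation is also available — expanding $h_{i-j}(\lambda)$ over weakly increasing multi-indices of length $i-j$ and checking that the number of ways to interleave $j$ copies of $\delta$ equals $\binom{n+i-1}{j}$ independently of the multi-index — but that bookkeeping is fiddlier than the generating-function argument above, so that is the route I would actually write out.
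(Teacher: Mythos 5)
Your proof is correct, and all steps check out: the factorization $1-u(\lambda_k+\delta)=(1-u\delta)\bigl(1-\tfrac{u}{1-u\delta}\lambda_k\bigr)$, the substitution $v=u/(1-u\delta)$, the negative-binomial expansion of $(1-u\delta)^{-(n+l)}$, and the reindexing $i=l+p$, $j=p$ all yield the stated identity cleanly. The paper offers no proof at all --- it simply declares the identity ``straightforward'' --- so there is nothing to compare against directly; but your generating-function manipulation is exactly parallel to the one the authors do spell out in Appendix~A when proving Lemma~1.3(b), where they expand $\sum\alpha_i(y,x)(A+sI_n)\tau^i=(1-s\tau)^{-n-1}\sum\alpha_i(y,x)(A)\bigl(\tau(1-s\tau)^{-1}\bigr)^i$, so your argument is in precisely the spirit of the text and fills the gap the authors leave to the reader.
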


 As a result, we get
\begin{equation}\label{hren2}
 \varphi_{-s}(T_i)=\sum_{j=0}^i \binom{n+i-1}{j}(-s)^jT_{i-j}.
\end{equation}

 Combining equations~(\ref{hren}) and~(\ref{hren2}), we get:
\begin{equation}\label{hren3}
 \underline{\vartheta}^{-1}(\bar{\varphi}^W(S_{m+1}))=(-1)^mT_{m+1}\otimes
 1+ (-1)^{m+1}s(n+m)T_m\otimes 1+ \sum_{l=-1}^{m-2} (-1)^lT_{l+1}\otimes 1\cdot  \bar{V}_l,
\end{equation}
 where $\bar{V}_l=\underline{\vartheta}^{-1}(\bar{\varphi}^W(V_l))$ and for $0\leq l\leq m-2$ we have
$$  V_l=\sum_{0\leq j\leq m-l} s^{m-l-j}\binom{n+m-j}{m-l-j}S_j.$$

  On the other hand, the commutativity of Diagram 3 implies
 $$\underline{\vartheta}^{-1}(\bar{\varphi}^W(S_{m+1}))=\varphi^H(\vartheta^{-1}(\rho(S_{m+1}))).$$

 Recall that there exist $c\in \CC,\ p\in \CC[\z_0,\ldots\z_{m-2}]$ such that $\vartheta^{-1}(\rho(S_{m+1}))=ct'_1+p$.
 As $\varphi^H(\z_i)=1\otimes \z_i$ and $\varphi^H(t_1')=\sum_{j=0}^m T_{j+1}\otimes w_j$ (by Theorem~\ref{Casimir}(b)), we get
\begin{equation}\label{non-hren}
 \varphi^H(\vartheta^{-1}(\rho(S_{m+1})))=1\otimes p(\z_0,\ldots,\z_{m-2})+\sum_{0\leq j\leq m}T_{j+1}\otimes cw_j.
\end{equation}

 Since $w_m=1, w_{m-1}=\frac{n+m}{2}$, the comparison of~(\ref{hren3}) and~(\ref{non-hren}) yields:

\noindent
 $\bullet$ The coefficients of $T_{m+1}$ must coincide, so that $(-1)^m=cw_m\Rightarrow c=(-1)^m$.

\noindent
 $\bullet$ The coefficients of $T_m$ must coincide, so that $cw_{m-1}=(-1)^{m+1}(n+m)s\Rightarrow s=-1/2$.

\noindent
 $\bullet$ The coefficients of $T_{j+1}$ must coincide for all $j\geq 0$, so that
              $$w_j=(-1)^{m-j}\bar{V}_j\Rightarrow\vartheta(w_j)=(-1)^{m-j}\rho(V_j).$$

\medskip
  Recall that $\bar{\eta}_m=1$, and so $\eta_m=\bar{\eta}_m=1$.
 As a result $s=-\frac{\eta_{m-1}}{n+m}$, so that $\eta_{m-1}=\frac{n+m}{2}$.

\medskip
\noindent
 The above discussion can be summarized as follows:
\begin{thm}\label{main_explicit}
  Let $\bar{\Theta}:H_m(\gl_n)\iso U(\ssl_{n+m},e_m)$ be the isomorphism from Theorem~\ref{main 1}(a).
 Then $\bar{\Theta}(A)=\Theta(A)-\frac{1}{2}\tr A,\ \bar{\Theta}(y)=\Theta(y),\ \bar{\Theta}(x)=\Theta(x),$
 while $\bar{\Theta}\mid_{\CC[\z_0,\ldots,\z_{m-2}]}$ is uniquely determined by
 $\bar{\Theta}(w_j)=(-1)^{m-j}\rho(V_j)$ for all $0\leq j\leq m-2$.
\end{thm}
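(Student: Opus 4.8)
The statement collects the outcomes of the Harish--Chandra calculations carried out above, so the plan is simply to assemble them and read off the three remaining constants. Recall from the proof of Theorem~\ref{main 1}(a) that, once we impose $\bar\Theta(y)=\Theta(y)$ and $\bar\Theta\mid_{\ssl_n}=\Theta\mid_{\ssl_n}$, the isomorphism $\bar\Theta$ is determined by three pieces of data: the nonzero constant $\eta_m$, entering via $\bar\Theta(x)=\eta_m^{-1}\Theta(x)$; the constant $\eta_{m-1}$, entering via $\bar\Theta(A)=\Theta(A)+s\tr A$ with $s=-\frac{\eta_{m-1}}{(n+m)\eta_m}$; and the restriction $\bar\Theta\mid_{\CC[\z_0,\ldots,\z_{m-2}]}$, which lands in the center and hence agrees there with $\vartheta$. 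Since $w_0,\ldots,w_{m-2}$ are affine functions of $\z_0,\ldots,\z_{m-2}$ with invertible triangular linear part, they freely generate $\CC[\z_0,\ldots,\z_{m-2}]$, so it is enough to pin down $\eta_m$, $\eta_{m-1}$ and the elements $\vartheta(w_j)$ for $0\le j\le m-2$.

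The mechanism for this is to push one central element through Diagrams~1--4. On the $W$-algebra side I take $S_{m+1}:=(\HC'_{n+m})^{-1}(\bar\sigma_{m+1})\in Z(U(\ssl_{n+m}))$ and, using the explicit block form of $\s_{n,m}$, Lemma~\ref{block 2} in the guise of equation~(\ref{added}), and the commutativity of Diagram~4, express $\bar\varphi^W(S_{m+1})$ through the elements $T_k=\HC_n^{-1}(h_k)$ and the lower $\bar\varphi^W(S_j)$; applying $\underline\vartheta^{-1}$ and correcting for the twist $\bar\Theta(A)=\Theta(A)+s\tr A$ by Lemma~\ref{twist} (equation~(\ref{hren2})) produces identity~(\ref{hren3}). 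On the Cherednik side I observe that $\vartheta^{-1}(\rho(S_{m+1}))$ has Kazhdan degree $2(m+1)$, so Corollary~\ref{center_explicit} forces $\vartheta^{-1}(\rho(S_{m+1}))=c\,t_1'+p$ for some $c\in\CC$ and $p\in\CC[\z_0,\ldots,\z_{m-2}]$; inserting $\varphi^H(\z_i)=1\otimes\z_i$ and $\varphi^H(t_1')=\sum_j T_{j+1}\otimes w_j$ from Theorem~\ref{Casimir}(b), together with the commutativity of Diagram~3, produces identity~(\ref{non-hren}).

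Equating~(\ref{hren3}) and~(\ref{non-hren}) and comparing coefficients finishes the argument: the coefficient of $T_{m+1}$ gives $c=(-1)^m$ since $w_m=1$; the coefficient of $T_m$ then gives $s=-\frac12$ since $w_{m-1}=\frac{n+m}{2}$; and the coefficients of $T_{j+1}$ for $0\le j\le m-2$ give $\vartheta(w_j)=(-1)^{m-j}\rho(V_j)$. Because $\eta_m$ is a constant it equals its Kazhdan symbol $\bar\eta_m=1$ (Theorem~\ref{main 2}(a)), whence $s=-\frac{\eta_{m-1}}{(n+m)\eta_m}$ forces $\eta_{m-1}=\frac{n+m}{2}$; substituting back gives $\bar\Theta(x)=\Theta(x)$, $\bar\Theta(I_n)=\Theta(I_n)-\frac n2$, and hence, by linearity and $\bar\Theta\mid_{\ssl_n}=\Theta\mid_{\ssl_n}$, $\bar\Theta(A)=\Theta(A)-\frac12\tr A$ for every $A\in\gl_n$. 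Injectivity of $\varphi^H$, which follows from injectivity of $\varphi^C$ via Diagram~4, guarantees that the values $\vartheta(w_j)$ genuinely determine $\bar\Theta\mid_{\CC[\z_0,\ldots,\z_{m-2}]}$. The only step that demands real care is the pair of diagram chases --- keeping straight the several Harish--Chandra isomorphisms and the $\rho$-vector shifts built into Diagram~4, and checking that the off-diagonal corrections produced by the twist $\varphi_{-s}$ reassemble exactly into the elements $V_l$; everything else is a routine comparison of polynomial coefficients.
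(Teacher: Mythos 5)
Your proposal is correct and follows the paper's own argument essentially step for step: you push $S_{m+1}$ through the same sequence of Harish--Chandra diagrams, derive the two expressions~(\ref{hren3}) and~(\ref{non-hren}), and compare coefficients of $T_{m+1}$, $T_m$, $T_{j+1}$ exactly as the paper does, reading off $c=(-1)^m$, $s=-\tfrac12$, $\eta_{m-1}=\tfrac{n+m}{2}$, and $\vartheta(w_j)=(-1)^{m-j}\rho(V_j)$. The one small addition --- noting explicitly that $w_0,\dots,w_{m-2}$ depend affinely and triangularly on $\z_0,\dots,\z_{m-2}$, so that fixing $\bar\Theta(w_j)$ indeed pins down $\bar\Theta\mid_{\CC[\z_0,\dots,\z_{m-2}]}$ --- is a correct observation that the paper leaves implicit.
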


\subsection{Higher central elements}

   It was conjectured in~\cite[Remark 6.1]{DT}, that the action of central elements $t_i'=t_i+c_i\in Z(H_m(\gl_n))$ on the Verma modules of $H_a(\gl_n)$
 should be obtained from the corresponding formulas at the the Poisson level (see Theorem~\ref{Poisson-center})
 via a \emph{basis change} $\z(z)\rightsquigarrow w(z)$ and a \emph{$\rho_n$-shift}.
  Actually, that is not true. However, we can choose another set of generators $u_i\in Z(H_m(\gl_n))$, whose action is given by
 formulas similar to those of Theorem~\ref{Poisson-center}.

 Let us define:

%\medskip
\noindent
 $\bullet$ central elements $u_i\in Z(H_m(\gl_n))$ by $u_i:=\underline{\vartheta}^{-1}(\rho(S_{m+i}))$ for all $0\leq i\leq n$,

%\medskip
\noindent
 $\bullet$ the generating polynomial $\wt{u}(t):=\sum_{i=0}^n (-1)^iu_it^i$,

%\medskip
\noindent
 $\bullet$ the generating polynomial $S(z):=\sum_{i=0}^n (-1)^i\underline{\vartheta}^{-1}(\bar{\varphi}^W(S_{m-i}))z^i\in \CC[\z_0,\ldots,\z_{m-2};z]$.

\medskip
 The following result is proved using the arguments of the Section~\ref{Section Casimir}:

\begin{thm}
 We have:
 $$(\HC_n\otimes \mathrm{Id})\circ \varphi^H(\wt{u}(t))=
   (\varphi_{1/2}\otimes \mathrm{Id})\left(\mathrm{Res}_{z=0}\ {S(z^{-1})\prod_{1\leq i\leq n}{\frac{1-t\lambda_i}{1-z\lambda_i}}\frac{z^{-1}dz}{1-t^{-1}z}}\right).$$
\end{thm}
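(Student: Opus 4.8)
The plan is to repeat the computation of Section~\ref{Section Casimir}, but now tracking \emph{all} the elements $S_{m+i}\in Z(U(\ssl_{n+m}))$ for $0\le i\le n$ simultaneously, rather than just $S_{m+1}$. The starting point is the generating-function identity for the symmetric polynomials $\bar\sigma_k$ restricted to $\s_{n,m}$. Recall from Lemma~\ref{block 2} and its proof the block decomposition $X=X_1\oplus X_2$ for $X\in\s_{n,m}$, together with the relations $\wt F_k(X)$ in terms of $\tr\Lambda^\bullet(X_1)$ and $\tr\Lambda^\bullet(X_2)$; packaging these into generating series $\det(1+zX)=\det(1+zX_1)\det(1+zX_2)$ and eliminating the $X_2$-factor (which is expressible through $\bar\rho(\wt F_s),\ldots,\bar\rho(\wt F_m)$ exactly as in the scalar case) gives a closed formula for $\sum_i\bar\rho(\wt F_{m+i})t^i$ as a residue in $z$ of $\det(1-tX_1)/\det(1-zX_1)$ against $\z(z^{-1})$ — this is precisely the kind of residue appearing in the statement of Theorem~\ref{Poisson-center}(a) and in the definition of the $c_i$ there. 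First I would record this identity cleanly at the Poisson level; it is the generating-function form of Proposition~\ref{technical}.

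Next I would transport this through the chain of maps already set up. Applying $\HC'_{n+m}$ and using the commutativity of Diagram~4 (the map $\varphi^C$ built from the identification $\CC^n\times\CC^{m-1}\cong\CC^{n+m-1}$) converts the $\bar\sigma_k$-side into the $h_j$-side on $\CC[\CC^n]^{S_n,\bullet}$, so that $\bar\varphi^W(S_{m+i})$ is expressed as a polynomial combination of $T_j\otimes 1$ with coefficients read off from the residue formula. Then, exactly as in the derivation of~(\ref{hren})–(\ref{hren3}), I would apply $\underline\vartheta^{-1}$, which on $Z(U(\gl_n))\otimes 1$ acts by the twist $\varphi_{-s}$ with $s=-1/2$ (established in Theorem~\ref{main_explicit}); by Lemma~\ref{twist} this twist is exactly a shift $\lambda_i\mapsto\lambda_i+\tfrac12$ of the $h_j$'s, i.e.\ an application of $\varphi_{1/2}$ on the Harish-Chandra side. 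Finally, Theorem~\ref{Casimir}(b) and the commutativity of Diagram~3 identify $\varphi^H(u_i)=\varphi^H(\underline\vartheta^{-1}(\rho(S_{m+i})))$, and collecting the generating series in $t$ and $z$ yields the asserted residue formula, with the $S(z)$ in the statement being precisely $\sum_i(-1)^i\underline\vartheta^{-1}(\bar\varphi^W(S_{m-i}))z^i$ and the outer $\varphi_{1/2}\otimes\mathrm{Id}$ absorbing the twist.

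The main obstacle I anticipate is purely bookkeeping: keeping the two generating variables $t$ (tracking the index $i$ of $u_i$, i.e.\ the degree $m+i$ part) and $z$ (the residue/integration variable from the $X_2$-elimination) separate and correctly matching signs, shifts in the exponents, and the placement of the $\frac{1}{1-t^{-1}z}$ denominator. Concretely, the delicate point is checking that the finitely many ``error terms'' — the analogues of the polynomial $p$ in~(\ref{non-hren}) and the correction $b_i$ in Proposition~\ref{technical}, which live in the subalgebra generated by lower $\bar\rho(\wt F_j)$'s — organize into exactly the principal part that the residue operation discards, so that no extra terms survive. Once the scalar identity of Lemma~\ref{block 2} is correctly upgraded to its generating-function avatar, the rest is a formal manipulation of residues; but verifying that the combinatorial coefficients $\binom{n+m-j}{m-l-j}$ appearing in~(\ref{hren3}) are exactly those produced by expanding $\prod_i\frac{1-t\lambda_i}{1-z\lambda_i}$ is where care is needed, and it is essentially the content of Lemma~\ref{twist} applied uniformly.
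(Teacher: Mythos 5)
Your proposal is correct and is exactly the argument the paper has in mind: the theorem is stated with the remark that it "is proved using the arguments of the Section~\ref{Section Casimir}," and you have identified precisely those ingredients (the generating-function upgrade of Lemma~\ref{block 2}, the transport through Diagrams 3 and 4, the twist by $\varphi_{-s}$ with $s=-1/2$, and Lemma~\ref{twist} to reinterpret the twist as the shift $\varphi_{1/2}$ on the Harish-Chandra side). The bookkeeping you flag as the main obstacle is genuinely the only nontrivial content; your outline matches the intended proof.
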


%%%%%%%%%%%%%%%%%%%%%%%%%%%%%%%%%%%%%%%%%%%%%%%%%%%%%%%%%%%%%%%%%%%%%%%%%%%%%%%%%%%%%%%%%%%%%%%%%%%%%%%%%%%%%%%%%%%%%%%%%%%%%%%%%%%%%%%%%%%%%%%%%%%%%%%%%%%%%

$\ $

\section{Completions}

\subsection{Completions of graded deformations of Poisson
algebras} We first recall the machinery of completions, elaborated
by the first author (our exposition follows~\cite{L2}). Let $Y$ be
an affine Poisson scheme equipped with a $\CC^*$-action, such that
the Poisson bracket has degree $-2$. Let $\Aa_\hbar$ be an
associative flat graded $\CC[\hbar]$-algebra (where $\deg(\hbar)=1$)
such that $[\Aa_\hbar,\Aa_\hbar]\subset \hbar^2\Aa_\hbar$ and
$\CC[Y]=\Aa_\hbar/(\hbar)$ as a graded Poisson algebra. Pick a point
$x\in Y$ and let $I_x\subset \CC[Y]$ be the maximal ideal of $x$,
while $\wt{I}_x$ will denote its inverse image in $\Aa_\hbar$.

\medskip
\begin{defn}
 The completion of $\Aa_\hbar$ at $x\in Y$ is by definition $\Aa_\hbar^{\wedge_x}:=\underset{\longleftarrow}\lim\ \Aa_\hbar/\wt{I}_x^n$.
\end{defn}
\medskip

 This is a complete topological $\CC[[\hbar]]$-algebra, flat over $\CC[[\hbar]]$,
such that $\Aa_\hbar^{\wedge_x}/(\hbar)=\CC[Y]^{\wedge_x}$.
 Our main motivation for considering this construction is the decomposition theorem, generalizing the corresponding classical result at the Poisson
level:

\begin{prop}\cite[Theorem 2.3]{K}
 The formal completion $\widehat{Y}_x$ of $Y$ at $x\in Y$ admits a
 product decomposition $\widehat{Y}_x=\mathcal{Z}_x\times
 \widehat{Y}_{x}^s$, where $Y^s$ is the symplectic leaf of $Y$
 containing $x$ and $\mathcal{Z}_x$ is a local formal Poisson
 scheme.
\end{prop}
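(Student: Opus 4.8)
The plan is to prove this as the formal algebraic incarnation of Weinstein's splitting theorem, by induction on the rank $2r$ of the Poisson bivector $\pi$ of $Y$ at the point $x$; note that $2r=\dim Y^s$, since the tangent space to the symplectic leaf at $x$ is exactly the image of $\pi_x$. Write $\mathfrak{m}_x\subset\CC[Y]$ for the maximal ideal of $x$ and $R:=\CC[Y]^{\wedge_x}$ for the complete local ring; the Poisson bracket of $\CC[Y]$ is continuous for the $\mathfrak{m}_x$-adic topology (it drops the adic order by a bounded amount), hence induces a topological Poisson structure on $R$. If $\pi_x=0$ there is nothing to prove: take the symplectic factor $\widehat{Y}_x^s$ to be the reduced point and $\mathcal{Z}_x:=\widehat{Y}_x$, whose bracket vanishes at $x$ by hypothesis. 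So assume $r\geq 1$.

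For the inductive step I would first produce one Darboux pair. Since $\pi_x\neq 0$, choose $q\in\mathfrak{m}_x$ whose differential at $x$ is not annihilated by $\pi_x$; then the Hamiltonian derivation $X_q=\{q,-\}$ of $R$ does not vanish at $x$. The formal flow-box theorem straightens $X_q$ and yields $p\in R$ with $\{p,q\}=1$. Then $[X_p,X_q]=X_{\{p,q\}}=0$, so $X_p,X_q$ are commuting derivations, linearly independent at $x$, whose commuting formal flows span a rank-two integrable foliation of $\operatorname{Spf}R$. Passing to a formal transversal through $x$ and spreading its functions along these flows produces generators $z_1,\dots,z_k$ of a complete local subalgebra $B:=\{a\in R:\{p,a\}=\{q,a\}=0\}$ with $R\cong\CC[[p,q]]\,\widehat{\otimes}\,B$; the Jacobi identity forces $\{z_a,z_b\}\in B$, so $B$ is a Poisson subalgebra and the isomorphism is one of topological Poisson algebras, the factor $\CC[[p,q]]$ carrying the standard symplectic bracket $\{p,q\}=1$. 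Since $\pi$ splits as an orthogonal sum, the bivector of $B$ at its closed point has rank $2r-2$, and the inductive hypothesis applies to $B$. After $r$ steps we arrive at an isomorphism of topological Poisson algebras
\[
 R\ \cong\ \CC[[p_1,q_1,\dots,p_r,q_r]]\,\widehat{\otimes}\,C,
\]
in which the first factor is the standard symplectic formal space ($\{p_i,q_j\}=\delta_{ij}$, $\{p_i,p_j\}=\{q_i,q_j\}=0$, $\{p_i,z_a\}=\{q_i,z_a\}=0$) and $C$ is a complete local Poisson algebra whose bracket vanishes at the closed point.

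It remains to identify the symplectic factor with the formal completion of the symplectic leaf. Set $\mathcal{Z}_x:=\operatorname{Spf}C$, a local formal Poisson scheme, and $D:=\operatorname{Spf}\CC[[p_1,q_1,\dots,p_r,q_r]]$ with its symplectic form $\sum_i dp_i\wedge dq_i$, so that $\widehat{Y}_x\cong\mathcal{Z}_x\times D$. From the displayed relations, as derivations of $R$ we have $X_{p_i}=\partial/\partial q_i$ and $X_{q_i}=-\partial/\partial p_i$ identically, while $X_{z_a}=\sum_b\{z_a,z_b\}\,\partial/\partial z_b$ vanishes along $D=\{z_1=\dots=z_\ell=0\}$ because $\{z_a,z_b\}$ lies in the maximal ideal of $C$. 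Hence along $D$ the characteristic distribution of $Y$ has constant rank $2r$ and coincides with the tangent distribution of $D$; since $2r=\dim D$ and $D$ is connected, $D$ is a single symplectic leaf, and as it contains $x$ it is the formal germ at $x$ of the leaf $Y^s$. Thus $\widehat{Y}_x^s:=D=\widehat{(Y^s)}_x$, which gives the asserted decomposition $\widehat{Y}_x\cong\mathcal{Z}_x\times\widehat{Y}_x^s$ (with $\widehat{Y}_x^s$ canonical and $\mathcal{Z}_x$ only well defined up to isomorphism).

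The main obstacle is the splitting step: carrying out the flow-box straightening and the subsequent \emph{orthogonalization} of the remaining generators inside the possibly singular complete local ring $R$, where there need be no regular system of parameters, and making sense of the formal flows of $X_p,X_q$ there — this is precisely where one uses that one has passed to the completion $R$ rather than working in $\CC[Y]$ itself. A minor preliminary point is the continuity of the bracket and the resulting restriction of the Poisson structure to the subalgebras $B$ and $C$, which follows from the bounded adic-order drop of $\{\cdot,\cdot\}$ together with Noetherianity of $\CC[Y]$.
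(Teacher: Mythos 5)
The paper does not reprove this statement; it is quoted verbatim from Kaledin, \cite[Theorem~2.3]{K}, so the only comparison available is against Kaledin's own argument. Your proof is the standard formal (algebraic) incarnation of Weinstein's splitting theorem, which is precisely how the cited result is proved, and the substance of your induction is correct.

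Two places should be tightened before this counts as a complete argument. First, the ``formal flow-box theorem'' you invoke is, in the possibly singular setting, the algebraic lemma: for a $\CC$-derivation $X$ of a complete Noetherian local ring $R$ with $X(\mathfrak m)\not\subset\mathfrak m$, one may choose $p\in\mathfrak m$ with $X(p)=1$ and then $R\cong R_0[[p]]$ with $R_0=\ker X$ and $X$ carried to $\partial/\partial p$. The isomorphism is produced by the series $\phi(a)=\sum_{n\geq 0}\frac{(-p)^n}{n!}X^n(a)$, with $a\mapsto\sum_{n\geq 0}\frac{1}{n!}\phi(X^n a)\,t^n$ as the inverse to $t\mapsto p$; convergence follows from $X(\mathfrak m^k)\subset\mathfrak m^{k-1}$ together with $p\in\mathfrak m$. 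Your ``spreading along the flows'' language is the geometric shadow of this, but since $R$ need not have a regular system of parameters the argument has to go through this series rather than through coordinates; applying it once to $X_p$ (transversal $q$) and once to $X_q$ restricted to $\ker X_p$ (transversal $p$) gives the splitting $R\cong\CC[[p,q]]\widehat\otimes B$ you assert. Second, the sentence ``$D$ is a single symplectic leaf'' is the right picture but is not a statement in the formal category; what one should say is that $D$ is swept out from $x$ by the formal flows of $X_{p_i},X_{q_i}$, so the closed immersion $D\hookrightarrow\widehat Y_x$ factors through $\widehat{(Y^s)}_x$, and since both are smooth formal discs of the same dimension $2r=\operatorname{rk}\pi_x=\dim Y^s$ the inclusion is an isomorphism. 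With those two points made precise, your induction is exactly the argument of \cite{K}.
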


 Fix a maximal symplectic subspace $V\subset T_x^*Y$. One can choose an embedding
$V\overset{i}\hookrightarrow\wt{I}^{\wedge_x}_x$ such that
$[i(u),i(v)]=\hbar^2\omega(u,v)$ and composition
$V\overset{i}\hookrightarrow\wt{I}_x^{\wedge_x}\twoheadrightarrow
T_x^*Y$ is the identity map. Finally, we define
$W_\hbar(V):=T(V)[h]/(u\otimes v-v\otimes u-\hbar^2\omega(u,v))$,
which is graded by setting $\deg(V)=1,\ \deg(\hbar)=1$ (the
\emph{homogenized Weyl algebra}). Then we have:

\begin{thm}\cite[Sect. 2.1]{L2}[Decomposition theorem]\label{decomposition}
 There is a splitting $\Aa_\hbar^{\wedge_x}\cong W_\hbar(V)^{\wedge_0}{\widehat{\otimes}}_{\CC[[\hbar]]}\underline{\Aa}'_\hbar$,
 where $\underline{\Aa}'_\hbar$ is the centralizer of $V$ in $\Aa_\hbar^{\wedge_x}$.
\end{thm}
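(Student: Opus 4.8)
The plan is to construct the splitting by first building the embedding $i\colon V\hookrightarrow \wt I_x^{\wedge_x}$ and then showing that, together with the centralizer $\underline{\Aa}_\hbar'$, it generates all of $\Aa_\hbar^{\wedge_x}$ in the required completed-tensor way. First I would set up the homogenized Weyl algebra $W_\hbar(V)$ and observe that the relation $[i(u),i(v)]=\hbar^2\omega(u,v)$, together with $\deg i(u)=1$ (which is forced by requiring $i$ to be graded and to split the quotient $\wt I_x^{\wedge_x}\twoheadrightarrow T_x^*Y$), gives a graded $\CC[\hbar]$-algebra map $W_\hbar(V)\to \Aa_\hbar^{\wedge_x}$. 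Completing at the origin of $W_\hbar(V)$ (i.e.\ at the augmentation ideal generated by $V$ and $\hbar$) is compatible with the $\wt I_x$-adic topology on the image because $i(V)\subset \wt I_x^{\wedge_x}$, so we get a continuous $\CC[[\hbar]]$-algebra homomorphism $W_\hbar(V)^{\wedge_0}\to \Aa_\hbar^{\wedge_x}$. Tensoring with the inclusion $\underline{\Aa}_\hbar'\hookrightarrow \Aa_\hbar^{\wedge_x}$ of the centralizer of $V$, and using that $W_\hbar(V)^{\wedge_0}$ and $\underline{\Aa}_\hbar'$ commute inside $\Aa_\hbar^{\wedge_x}$, yields the multiplication map
\[
  \mu\colon W_\hbar(V)^{\wedge_0}\,\widehat{\otimes}_{\CC[[\hbar]]}\,\underline{\Aa}_\hbar'\longrightarrow \Aa_\hbar^{\wedge_x}.
\]
The whole content of the theorem is that $\mu$ is an isomorphism.

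The second step is to prove $\mu$ is an isomorphism by reducing modulo $\hbar$ and invoking completeness and flatness. Both sides are flat (separated, complete) $\CC[[\hbar]]$-modules: the left by construction, the right by the standard flatness of $\Aa_\hbar^{\wedge_x}$ over $\CC[[\hbar]]$ recorded just before the statement. Hence by a Nakayama/graded-completion argument it suffices to check that the induced map modulo $(\hbar)$ is an isomorphism. Modulo $\hbar$, $W_\hbar(V)^{\wedge_0}$ degenerates to $\widehat{S(V)}=\CC[[V^*]]$ (the formal symplectic vector space $\widehat{V}^s$ in the notation of Kaledin's decomposition), $\underline{\Aa}_\hbar'/(\hbar)$ is the centralizer of $V$ in $\CC[Y]^{\wedge_x}$, and the map $\mu\bmod\hbar$ becomes exactly the Kaledin product decomposition $\CC[Y]^{\wedge_x}\cong \CC[\widehat{Y}^s_x]\,\widehat\otimes\,\CC[\mathcal Z_x]$ from the Proposition cited above. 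So the reduction mod $\hbar$ is an isomorphism, and we conclude.

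The step I expect to be the main obstacle is the \emph{existence} of a good embedding $i$: namely producing elements $i(u)\in\wt I_x^{\wedge_x}$ that are exactly homogeneous of degree $1$, satisfy the Weyl relations $[i(u),i(v)]=\hbar^2\omega(u,v)$ on the nose (not merely modulo higher order), and reduce to a chosen symplectic basis of $V\subset T_x^*Y$. This is done by a deformation/obstruction induction over the $\wt I_x$-adic filtration: start from arbitrary graded lifts of a symplectic basis, compute the commutator, which a priori lies in $\hbar^2\Aa_\hbar^{\wedge_x}$ and equals $\hbar^2\omega(u,v)$ plus terms in $\hbar^2\wt I_x^{\wedge_x}$, and successively correct the lifts; the obstructions to correcting live in an appropriate Hochschild-type group which vanishes for the completed algebra because $\CC[Y]^{\wedge_x}$ is a smooth formal scheme in the transverse directions after Kaledin's decomposition — equivalently, one may simply quote \cite[Sect. 2.1]{L2} for this construction, since the excerpt already invokes that reference for the embedding $i$. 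One must also verify that $\underline{\Aa}_\hbar'$ is itself flat and complete and that its reduction mod $\hbar$ is the classical transverse slice algebra $\CC[\mathcal Z_x]$; this follows formally once $\mu\bmod\hbar$ is identified with Kaledin's isomorphism, since the left factor $W_\hbar(V)^{\wedge_0}$ is a ``free'' complement. Assembling these pieces gives the desired splitting.
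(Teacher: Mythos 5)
The paper itself gives no proof of this theorem: it is stated as a citation of \cite[Sect.\ 2.1]{L2}, so there is no argument in the text for you to compare against. I will therefore assess your proposal on its own.

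Your first step (constructing $i\colon V\hookrightarrow\widetilde{I}_x^{\wedge_x}$ with the Weyl relations exactly, by lifting a symplectic basis and correcting order by order along the $\widetilde{I}_x$-adic filtration) is the right idea and is how the embedding is produced in Losev's work. The problem is in the second step. To run the reduction-modulo-$\hbar$ argument you need three things about $\underline{\Aa}_\hbar'$: that it is flat over $\CC[[\hbar]]$, that it is complete, and that the natural injection $\underline{\Aa}_\hbar'/(\hbar)\hookrightarrow\{\,\text{Poisson centralizer of }V\text{ in }\CC[Y]^{\wedge_x}\,\}=\CC[\mathcal{Z}_x]$ is also \emph{surjective}. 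Flatness and completeness are cheap (a closed $\hbar$-torsion-free submodule of a flat complete module). But the surjectivity is exactly the hard point, and you dispose of it by saying it ``follows formally once $\mu\bmod\hbar$ is identified with Kaledin's isomorphism.'' That is circular: you cannot identify $\mu\bmod\hbar$ with Kaledin's isomorphism without already knowing $\underline{\Aa}_\hbar'/(\hbar)=\CC[\mathcal{Z}_x]$, since the image of $\mu\bmod\hbar$ is precisely $\widehat{S(V)}\widehat{\otimes}(\underline{\Aa}_\hbar'/(\hbar))$ sitting inside $\widehat{S(V)}\widehat{\otimes}\CC[\mathcal{Z}_x]=\CC[Y]^{\wedge_x}$. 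A priori nothing forces every element of $\CC[\mathcal{Z}_x]$ to lift to an element of $\Aa_\hbar^{\wedge_x}$ that genuinely commutes with $i(V)$, and this is the real content of the theorem.

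The way this is handled in Losev's proof (and the way it must be handled) is to construct an explicit projection $\pi\colon\Aa_\hbar^{\wedge_x}\to\underline{\Aa}_\hbar'$ rather than to argue after the fact. The operators $D_v:=\hbar^{-2}\ad(i(v))$ for $v\in V$ are well-defined (because $[\Aa_\hbar,\Aa_\hbar]\subset\hbar^2\Aa_\hbar$), commute with one another (since $\ad([i(u),i(v)])=\ad(\hbar^2\omega(u,v))=0$), and are topologically nilpotent in the $\widetilde{I}_x$-adic topology. Fixing a symplectic basis, one writes down a convergent ``normal ordering'' formula expressing any $a\in\Aa_\hbar^{\wedge_x}$ as a completed sum of monomials in the $i(p_j)$'s and $i(q_j)$'s with coefficients in the joint kernel of the $D_v$'s, i.e.\ in $\underline{\Aa}_\hbar'$. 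This produces $\pi$ and simultaneously the two-sided inverse to $\mu$, and it yields the surjectivity $\underline{\Aa}_\hbar'/(\hbar)\twoheadrightarrow\CC[\mathcal{Z}_x]$ as a byproduct (lift $\bar b$ to any $a\in\Aa_\hbar^{\wedge_x}$, then $\pi(a)\in\underline{\Aa}_\hbar'$ reduces to $\bar b$ because $\pi\bmod\hbar$ is the Poisson projection from Kaledin's decomposition). So your overall architecture is sound, but the step you waved at is precisely the one carrying the content, and the fix is the projection operator, not the mod-$\hbar$ Nakayama argument alone.
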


\begin{rem}
 Recall that a filtered algebra $\{F_i(B)\}_{i\geq 0}$ is called a \emph{filtered deformation} of $Y$ if $\gr_{F_\bullet} B\cong \CC[Y]$ as Poisson graded algebras.
 Given such $B$, we set $\Aa_\hbar:=\mathrm{Rees}_\hbar(B)$ (the Rees algebra of the filtered algebra $B$), which
 naturally satisfies all the above conditions.
 \end{rem}

\noindent
 This remark provides the following interesting examples of $\Aa_\hbar$:

\medskip \noindent
$\bullet$  \emph{The homogenized Weyl algebra.}

 Algebra $W_\hbar(V)$ from above is obtained via the Rees construction from the usual Weyl algebra.
 In the case $V=V_n\oplus V_n^*$ with a natural symplectic form, we denote $W_\hbar(V)$ just by $W_{\hbar,n}$.

\medskip \noindent
 $\bullet$ \emph{The homogenized universal enveloping algebra.}

 For any graded Lie algebra $\g=\bigoplus \g_i$ with a Lie bracket of degree $-2$, we define
  $$\U_\hbar(\g):=T(\g)[\hbar]/(x\otimes y-y\otimes x-\hbar^2[x,y]|x,y\in \g),$$
 graded by setting $\deg(\g_i)=i,\ \deg(\hbar)=1.$

\medskip \noindent
 $\bullet$ \emph{The homogenized universal infinitesimal Cherednik algebra of $\gl_n$.}

  Define $H_{\hbar,m}(\gl_n)$ as a quotient $H_{\hbar,m}(\gl_n):=\U_\hbar(\gl_n)\ltimes T(V_n\oplus V_n^*)[\z_0,\ldots,\z_{m-2}]/J$, where
    $$J=\left([x,x'], [y,y'], [A,x]-\hbar^2A(x), [A,y]-\hbar^2A(y), [y,x]-\hbar^2(\sum_{j=0}^{m-2}\z_j r_j(y,x)+r_m(y,x))\right).$$
 This algebra is graded by setting $\deg(V_n\oplus V_n^*)=m+1,\ \deg(\z_i)=2(m-i)$.

\medskip \noindent
 $\bullet$ \emph{The homogenized universal infinitesimal Cherednik algebra of $\spn_{2n}$.}

  Define $H_{\hbar,m}(\spn_{2n})$ as a quotient $H_{\hbar,m}(\spn_{2n}):=\U_\hbar(\spn_{2n})\ltimes T(V_{2n})[\z_0,\ldots,\z_{m-1}]/J$, where
    $$J=\left([A,y]-\hbar^2A(y), [x,y]-\hbar^2(\sum_{j=0}^{m-1}\z_jr_{2j}(x,y)+r_{2m}(x,y))|A\in \spn_{2n},\ x,y\in V_{2n}\right).$$
 This algebra is graded by setting $\deg(V_{2n})=2m+1,\ \deg(\z_i)=4(m-i)$.

\medskip \noindent
 $\bullet$ \emph{The homogenized $W$-algebra.}

   The homogenized $W$-algebra, associated to $(\g,e)$ is defined by $U_\hbar(\g,e):=(\U_\hbar(\g)/\U_\hbar(\g)\m')^{\ad \m}$.
% The other view-point towards $U_\hbar(\g,e)$ is via the decomposition theorem (see~\cite{L2} for details).

\medskip

 There are many interesting contexts in which Theorem~\ref{decomposition} proved to be a useful tool.
Among such let us mention Rational Cherednik algebras (\cite{BE}),
Symplectic Reflection algebras (\cite{L4}) and $W$-algebras
(\cite{L1,L2}).

\medskip
  Actually, combining results of~\cite{L2} with Theorem~\ref{main 1}, we get isomorphisms
\begin{equation} \tag{*}
  \Psi_m:H_{\hbar,m}(\gl_n)^{\wedge_v}\iso
  H_{\hbar,m+1}(\gl_{n-1})^{\wedge_0}{\widehat{\otimes}}_{\CC[[\hbar]]}
  W_{\hbar,n}^{\wedge_v},
\end{equation}
 \begin{equation} \tag{$\spadesuit$}
  \Upsilon_m:H_{\hbar,m}(\spn_{2n})^{\wedge_{v}}\iso
  H_{\hbar,m+1}(\spn_{2n-2})^{\wedge_0}{\widehat{\otimes}}_{\CC[[\hbar]]}
  W_{\hbar,2n}^{\wedge_{v}},
\end{equation}
 where $v\in V_n$ (respectively $v\in V_{2n}$) is a nonzero element and $m\geq 1$.

 These decompositions can be viewed as \emph{quantizations} of their Poisson versions:
\begin{equation} \tag{$\star$}
  \Psi_m^{\mathrm{cl}}:H_{m}^{\mathrm{cl}}(\gl_n)^{\wedge_{v}}\iso
  H_{m+1}^{\mathrm{cl}}(\gl_{n-1})^{\wedge_0}{\widehat{\otimes}}_{\CC} W_{n}^{\mathrm{cl},\wedge_{v}},
\end{equation}
\begin{equation} \tag{$\heartsuit$}
  \Upsilon_m^{\mathrm{cl}}:H_{m}^{\mathrm{cl}}(\spn_{2n})^{\wedge_{v}}\iso
  H_{m+1}^{\mathrm{cl}}(\spn_{2n-2})^{\wedge_0}{\widehat{\otimes}}_{\CC}
  W_{2n}^{\mathrm{cl},\wedge_{v}},
\end{equation}
 where $W_{n}^{\mathrm{cl}}\simeq
 \CC[x_1,\ldots,x_n,y_1,\ldots,y_n]$ with $\{x_i,x_j\}=\{y_i,y_j\}=0,
 \{x_i,y_j\}=\delta_i^j$.

  Isomorphisms (*) and ($\spadesuit$) are not unique and, what is worse, are  inexplicit.
% Their proof is actually based on the following observation:
%
%\begin{rem} The Slodowy slices to $e_m,e_{m+1}\in \g_N$ are
%connected in the following way:
%
% (a) The Slodowy slice to $e_m\in \ssl_{n+m}$ contains an element $e_{m+1}$. As a result,
% in the formal neighborhood of $e_{m+1}$, the slice $\s_{n,m}$ decomposes into the product of $\s_{n-1,m+1}$ and $T_{e_{m+1}}(\mathrm{Ad}\ \mathrm{GL}_{n+m}(e_{m+1}))\cap \s_{n,m}$.
%
% (b) The Slodowy slice to $e_m\in \spn_{2n+2m}$ contains $e_{m+1}$. As a result, in the formal neighborhood of $e_{m+1}$, the slice $\s_{n,m}$ decomposes
% into the product of the Slodowy slice $\s_{n-1,m+1}$ and an intersection $T_{e_{m+1}}(\mathrm{Ad}\ \mathrm{Sp}_{2n+2m}(e_{m+1}))\cap \s_{n,m}$.
%\end{rem}

\medskip
 Let us point out that localizing at other points of $\gl_n\times V_n\times V_n^*$ (respectively $\spn_{2n}\times V_{2n}$) yields other decomposition isomorphisms.
 In particular, one gets~\cite[Theorem 3.1]{T3}
 \footnote{\ This result is stated in~\cite{T3}. However, its proof in the \emph{loc. cit.} is computationally wrong.} as follows:

 \begin{rem}
 For $n=1,m>0,$ consider $e':=e_m+E_{1,2n+2}\in \s_{1,m}\subset \spn_{2m+2}$,
 which is a subregular nilpotent element of $\spn_{2m+2}$. Above arguments yield a decomposition isomorphism
 \begin{equation}\tag{$\clubsuit$}
   H_{\hbar,m}(\spn_2)^{\wedge_{E_{12}}}\iso
  U_\hbar(\spn_{2m+2},e')^{\wedge_0}{\widehat{\otimes}}_{\CC[[\hbar]]}
  W_{\hbar,1}^{\wedge_0}.
 \end{equation}
  The full central reduction of ($\clubsuit$) provides an isomorphism of~\cite[Theorem 3.1]{T3}.
  \footnote{\ To be precise, we use an isomorphism of the $W$-algebra $U(\spn_{2m+2},e')$ and the non-commutative deformation of Crawley-Boevey and Holland
             of type $D_{m+2}$ Kleinian singularity.}
\end{rem}

\medskip

 In Appendix C, we establish explicitly suitably modified versions of (*) and ($\spadesuit$) for the
 cases $m=-1,\ 0$, which do not follow from the above arguments. In particular, the reader will get a flavor of what the formulas look like.

$\ $

%%%%%%%%%%%%%%%%%%%%%%%%%%%%%%%%%%%%%%%%%%%%%%%%%%%%%%%%%%%%%%%%%%%%%%%%%%%%%%%%%%%%%%%%%%%%%%%%%%%%%%%%%%%%%%%%%%%%%%%%%%%%%%%%%%%%%%%%%%%%%%%%%%%%%%

\newpage

\appendix

\section{Proof of Lemmas~\ref{less_parameters},~\ref{less_parameters2}}

\medskip
$\bullet$ \emph{Proof of Lemma~\ref{less_parameters}(a)}
\medskip

 Let $\phi:H_{\z}(\gl_n)\overset{\sim}\longrightarrow
H_{\z'}(\gl_n)$ be a filtration preserving isomorphism. We have
$\phi(1)=1$, so that $\phi$ is the identity on the $0$-th level of
the filtration.

 Since $\F^{(N)}_2(H_{\z}(\mathfrak{gl}_n))=\F^{(N)}_2(H_{\z'}(\mathfrak{gl}_n))=\U(\gl_n)_{\leq 1}$,
we have $\phi(A)=\psi(A)+\gamma(A),\ \forall A\in \gl_n$, with
$\psi(A)\in \gl_n, \gamma(A)\in \CC$. Then
$\phi([A,B])=[\phi(A),\phi(B)],\ \forall A,B\in \gl_n$, if and only
if $\gamma([A,B])=0$ and $\psi$ is an automorphism of the Lie
algebra $\gl_n$. Since $[\gl_n,\gl_n]=\ssl_n$, we have
$\gamma(A)=\lambda\cdot \tr A$ for some $\lambda\in\CC$. For $n\geq
3$, $\Aut(\gl_n)=\Aut(\ssl_n)\times \Aut (\CC)=(\mu_2\ltimes
\mathrm{SL}(n))\times \CC^*$, where $-1\in \mu_2$ acts on $\ssl_n$
via $\sigma:A\mapsto -A^t$. This determines $\phi$ up to the
filtration level $N-1$.

 Finally, $\F^{(N)}_N(H_{\z}(\mathfrak{gl}_n))=\F^{(N)}_N(H_{\z'}(\mathfrak{gl}_n))=V_n\oplus V_n^*\oplus \U(\gl_n)_{\leq
 N}$. As explained, $\phi_{|{\U(\gl_n)}}$ is parameterized by $(\epsilon, T, \nu, \lambda)\in (\mu_2\ltimes
\mathrm{SL}(n))\times \CC^*\times \CC$ (no $\mu_2$-factor for $n=1,
2$). Let $I_n\in \gl_n$ be the identity matrix. Note that $[I_n,
y]=y, [I_n, x]=-x, [I_n, A]=0$ for any $y\in V_n, x\in V_n^*, A\in
\gl_n$.
  Since $\phi(y)=[\nu\cdot
I_n+n\lambda,\phi(y)]=\nu[I_n,\phi(y)],\ \forall y\in V_n$, we get
$\nu=\pm 1$.

 \textbf{Case 1: $\nu=1$.} Then $\phi(y)\in V_n,\ \phi(x)\in V_n^*\ (\forall y\in
V_n,x\in V_n^*)$. Since $V_n\ncong V_n^\sigma$ as $\ssl_n$-modules
for $n\geq 3$ and $\End_{\ssl_n}(V_n)=\CC^*$, we get $\epsilon=1\in
\mu_2$ (so that $\phi(A)=TAT^{-1},\ \forall A\in \ssl_n$) and there
exist $\theta_1,\theta_2\in \CC^*$ such that $\phi(y)=\theta_1\cdot
T(y),\ \phi(x)=\theta_2\cdot T(x)\ (\forall y\in V_n, x\in V_n^*)$.
 Hence, we get $\varphi(T,\lambda)(\z(y,x))=\phi([y,x])=[\phi(y),\phi(x)]=\theta
\z'(T(y),T(x))$, where $\theta=\theta_1\theta_2$ and isomorphism
$\varphi(T,\lambda):\U(\gl_n)\overset{\sim}\longrightarrow
\U(\gl_n)$ is defined by $A\mapsto TAT^{-1}+\lambda\tr A,\ \forall
A\in \gl_n$.

 Thus, $\z'=\theta^{-1}\varphi_\lambda(\z^+)$ in that case.

  \textbf{Case 2: $\nu=-1$.} Then $\phi(y)\in V_n^*,\ \phi(x)\in V_n\ (\forall y\in
V_n,\ x\in V_n^*)$. Similarly to the above reasoning we get
$\epsilon=-1\in \mu_2, \phi(A)=-TA^tT^{-1}+\lambda\tr A\ (\forall
A\in \gl_n)$, so that there exist $\theta_1,\theta_2\in \CC^*$ such
that $\phi(y_i)=\theta_1\cdot T(x_i),\ \phi(x_j)=\theta_2\cdot
T(y_j)$. Then
$\phi(\z(y_i,x_j))=-\theta_1\theta_2\z'(T(y_j),T(x_i))$.

 Hence, $\z'=-\theta_1^{-1}\theta_2^{-1}\varphi_{-\lambda}(\z^-)$ in that case.

 Finally, the above arguments also provide isomorphisms
 $\phi_{\theta,\lambda,s}:H_{\z}(\mathfrak{gl}_n)\overset{\sim}\longrightarrow H_{\theta\varphi_\lambda(\z^s)}(\mathfrak{gl}_n)$ for any deformation
$\z$, constants $\lambda\in \CC,\theta\in \CC^*$ and a sign $s\in
\{\pm\}$.

\medskip
$\bullet$ \emph{Proof of Lemma~\ref{less_parameters}(b)}
\medskip

  Let $\z$ be a length $m$ deformation. Since $(\theta \z)_m=\theta\z_m$, we can assume $\z_m=1$.
 We claim that $\varphi_\lambda(\z)_{m-1}=0$ for $\lambda=-\z_{m-1}/(n+m)$, which is equivalent to
 $\frac{\partial{\alpha_m}}{\partial{I_n}}=(n+m)\alpha_{m-1}$.
  This equality follows from comparing coefficients of $s\tau^m$ in the identity
   $$\sum{\alpha_i(y,x)(A+sI_n)\tau^i}=(1-s\tau)^{-n-1}\sum{\alpha_i(y,x)(A)(\tau(1-s\tau)^{-1})^i}.$$

\medskip
$\bullet$ \emph{Proof of Lemma~\ref{less_parameters2}}
\medskip

  Let $\phi:H_{\z}(\spn_{2n})\overset{\sim}\longrightarrow H_{\z'}(\spn_{2n})$ be a filtration preserving isomorphism.
 Being an isomorphism, we have $\phi(1)=1$, so that $\phi$ is the identity on the $0$-th level of the filtration.

  Since $\F^{(N)}_2(H_{\z}(\spn_{2n}))=\F^{(N)}_2(H_{\z'}(\spn_{2n}))=\U(\spn_{2n})_{\leq 1}$,
 we have $\phi(A)=\psi(A)+\gamma(A)$ for all $A\in \spn_{2n}$, with $\psi(A)\in \spn_{2n}, \gamma(A)\in \CC$.
 Then $\phi([A,B])=[\phi(A),\phi(B)],\ \forall A,B\in \spn_{2n}$, if and only if $\gamma([A,B])=0$ and
 $\psi$ is an automorphism of the Lie algebra $\spn_{2n}$.
  Since $[\spn_{2n},\spn_{2n}]=\spn_{2n}$, we have $\gamma\equiv 0$.
 Meanwhile, any automorphism of $\spn_{2n}$ is inner, since $\spn_{2n}$ is a simple Lie algebra whose Dynkin diagram has no automorphisms.
 This proves $\phi_{|{\U(\spn_{2n})}}=\mathrm{Ad}(T),\ T\in
 \mathrm{Sp}_{2n}$. Composing with an automorphism $\phi'$ of $H_{\z'}(\spn_{2n})$, defined by
 $\phi'(A)=\mathrm{Ad}(T^{-1})(A),\phi'(x)=T^{-1}(x)\ (A\in \spn_{2n},x\in V_{2n})$ we can assume $\phi_{|{\U(\spn_{2n})}}=\mathrm{Id}$.

 % Finally, $\F^{(N)}_{2N}(H_{\z}(\spn_{2n}))=\F^{(N)}_{2N}(H_{\z'}(\spn_{2n}))=V_{2n}\oplus \U(\spn_{2n})_{\leq 2N}$ and $\phi_{|{\U(\spn_{2n})}}=\mathrm{Id}$.
 Recall the element $I'_n=\diag(1,\ldots,1,-1,\ldots,-1)\in \spn_{2n}$.
 Since $\ad(I'_n)$ has only even eigenvalues on $U(\spn_{2n})$ and eigenvalues $\pm 1$ on $V_{2n}$,
 we actually have $\phi(V_{2n})\subset V_{2n}$.
  Together with $\End_{\spn_{2n}}(V_{2n})=\CC^*$ this implies the result.

 The converse, that is $H_{\z}(\spn_{2n})\cong H_{\theta\z}(\spn_{2n})$ for any $\z$ and $\theta\in \CC^*$, is obvious.

\medskip
\section{Minimal nilpotent case}

 We compute the isomorphism of Theorem~\ref{main 1} explicitly for the case of $e\in \g$ being the minimal
 nilpotent. This case has been considered in details in~\cite[Section 4]{P2}.

 To state the main result we introduce some more notation. Let
$z_1,\ldots,z_{2s}$ be a Witt basis of $\g(-1)$, i.e.
$\omega_{\chi}(z_{i+s},z_j)=\delta_i^j,\
\omega_{\chi}(z_i,z_j)=\omega_{\chi}(z_{i+s},z_{j+s})=0$ for any
$1\leq i,j\leq s$. We also define $\sharp:\g(0)\to \g(0)$ by
$x^\sharp:= x-\frac{1}{2}(x,h)h$. Finally, we set $c_0:=-n(n+1)/4$
for $\g=\ssl_{n+1}$ and $c_0:=-n(2n+1)/8$ for $\g=\spn_{2n}$. Then
we have the following theorem:

\begin{thm} \label{Premet_key} \cite[Theorem 6.1]{P2}
 The algebra $U(\g,e)$ is generated by the Casimir element $C$ and
the subspaces $\Theta(\zz_{\chi}(i))$ for $i=0,1$, subject to the
following relations:

$\ \ (i)\ [\Theta_x,\Theta_y]=\Theta_{[x,y]},\
[\Theta_x,\Theta_u]=\Theta_{[x,u]}$ for all $x,y\in \zz_{\chi}(0),
u\in \zz_{\chi}(1)$;

$\ \ (ii)\ C$ is central in $U(\g,e)$;

$\ \ (iii)\ [\Theta_u,\Theta_v]=
\frac{1}{2}(f,[u,v])(C-\Theta_{\Cas}-c_0)+\frac{1}{2}\sum_{1\leq
i\leq
2s}(\Theta_{[u,z_i]^{\sharp}}\Theta_{[v,z_i^*]^{\sharp}}+\Theta_{[v,z_i^*]^{\sharp}}\Theta_{[u,z_i]^{\sharp}}),$
for all $u,v\in \zz_{\chi}(1)$, where $\Theta_{\Cas}$ is a Casimir
element of the Lie algebra $\Theta(\zz_{\chi}(0))$.
\end{thm}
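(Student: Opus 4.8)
Here is a plan, following Premet's strategy in~\cite{P2}. The idea is to first check that the listed elements generate $\U(\g,e)$ and that relations (i)--(iii) hold, and then verify that these relations present the algebra by a dimension count on the Kazhdan-associated graded.

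\emph{Setup.} For $e$ minimal the $\mathfrak{sl}_2$-grading is short, $\g=\g(-2)\oplus\cdots\oplus\g(2)$ with $\g(\pm2)$ one-dimensional; hence $\zz_\chi=\zz_\chi(0)\oplus\zz_\chi(1)\oplus\CC e$ with $\zz_\chi(1)=\g(1)$ (since $[e,\g(1)]\subset\g(3)=0$) and $\zz_\chi(0)=\g^e(0)$ reductive. Moreover $\g(0)=\zz_\chi(0)\oplus\CC h$, and because $(z,h)=([z,e],f)=0$ for $z\in\zz_\chi(0)$, the map $x\mapsto x^\sharp$ is (for the normalization $(e,f)=1$) exactly the projection $\g(0)\twoheadrightarrow\zz_\chi(0)$ along $\CC h$. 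In the Kazhdan filtration $\Theta(\zz_\chi(0))$, $\Theta(\zz_\chi(1))$ and $C$ have degrees $2$, $3$ and $4$, and Premet's construction~\cite{P1,P2} furnishes a $Q$-equivariant linear lift $\Theta\colon\zz_\chi\to\U(\g,e)$ with $\gr\Theta_x=x\in S(\zz_\chi)=\gr\U(\g,e)$ (Theorem~\ref{gr}).

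\emph{Relations (i), (ii), generation.} Relation (ii) is immediate: $C=\rho(\Omega)$ is the image of the Casimir of $\g$ under the homomorphism $\rho$ of Theorem~\ref{center}, hence central. The first half of (i), that $\Theta$ restricts to a Lie algebra embedding of $\zz_\chi(0)=\q$, is Premet's theorem recalled in Section~\ref{W properties}(a); the mixed bracket $[\Theta_x,\Theta_u]=\Theta_{[x,u]}$ ($x\in\zz_\chi(0),u\in\zz_\chi(1)$) then follows by differentiating the $Q$-equivariance of $\Theta$, since $\ad\Theta_x$ is the differential of the $Q$-action. For generation, restricting the quadratic Casimir function to the Slodowy slice $\s=e+\C$ and using the Killing-form pairing $\g^e\times\g^f\to\CC$ that identifies $\CC[\s]$ with $S(\zz_\chi)$ gives $\gr C=2e+\kappa$ with $\kappa$ a $Q$-invariant quadratic in $\zz_\chi(0)$; since the $e$-coefficient is nonzero, $\gr C$, $\zz_\chi(0)$, $\zz_\chi(1)$ generate $S(\zz_\chi)$, so $C$, $\Theta(\zz_\chi(0))$, $\Theta(\zz_\chi(1))$ generate $\U(\g,e)$.

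\emph{Relation (iii) --- the main point.} Fix $u,v\in\zz_\chi(1)=\g(1)$. Since $\gr\U(\g,e)$ is Poisson of degree $-2$, $[\Theta_u,\Theta_v]$ lies in Kazhdan degree $\le 4$, and modulo degree $\le3$ that degree is spanned by $C$ together with the products $\Theta_x\Theta_y$, $x,y\in\zz_\chi(0)$; thus $[\Theta_u,\Theta_v]=\kappa(u,v)\,C+Q(u,v)+(\text{degree}\le 3)$ with $\kappa(u,v)\in\CC$ and $Q(u,v)$ quadratic in $\Theta(\zz_\chi(0))$, both alternating and $\zz_\chi(0)$-equivariant in $(u,v)$. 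Passing to symbols, the left side becomes the Poisson bracket $\{u,v\}_\s$ on the slice, computed from the Gan--Ginzburg description~\cite{GG}: for a short grading it equals $(f,[u,v])\,e$ (the $\g(2)=\CC e$ part of $[u,v]$) plus an explicit quadratic in $\zz_\chi(0)$ obtained from the Dirac correction along $\g(-1)$. Comparing the $e$-coefficient with that of $\kappa(u,v)\gr C=\kappa(u,v)(2e+\kappa)$ forces $\kappa(u,v)=\tfrac12(f,[u,v])$, and the remaining quadratic then identifies $Q(u,v)$ with $\tfrac12\sum_i\Theta_{[u,z_i]^\sharp}\Theta_{[v,z_i^*]^\sharp}^{\mathrm{sym}}-\tfrac12(f,[u,v])\Theta_{\Cas}$ up to lower order ($\Theta_{\Cas}$ being the Casimir of $\Theta(\zz_\chi(0))$, whose symbol is $\kappa$). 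It remains to pin down the degree $\le 3$ correction: it is alternating and $\zz_\chi(0)$-equivariant in $(u,v)$, hence a sum of a term linear in $\Theta(\zz_\chi(1))$, a term linear in $\Theta(\zz_\chi(0))$, and a constant; the first two vanish by equivariance together with an auxiliary weight argument, while the constant is $-\tfrac12(f,[u,v])c_0$, whose value $c_0=-n(n+1)/4$ (resp.\ $-n(2n+1)/8$) is obtained by directly evaluating one such commutator in $\U(\g)/\U(\g)\m'$, or equivalently by matching central characters on a Verma module. This gives (iii).

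\emph{The relations are complete.} Let $A$ be the abstract algebra on generators $C$, $\Theta(\zz_\chi(0))$, $\Theta(\zz_\chi(1))$ subject to (i)--(iii); by the above there is a surjection $A\twoheadrightarrow\U(\g,e)$. Filter $A$ by Kazhdan degree ($\deg C=4$, $\deg\zz_\chi(0)=2$, $\deg\zz_\chi(1)=3$); each relation equates a commutator to an element of strictly smaller degree, so $\gr A$ is commutative and generated in degrees $2,3,4$, hence a quotient of a polynomial ring whose Hilbert series equals that of $S(\zz_\chi)=\gr\U(\g,e)$. Since $\gr A\twoheadrightarrow\gr\U(\g,e)$, both Hilbert series coincide, so $\gr A\iso\gr\U(\g,e)$ and therefore $A\iso\U(\g,e)$. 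The main obstacle is precisely the explicit determination of $\kappa(u,v)$, $Q(u,v)$, and above all the constant $c_0$ in (iii): the rest is structural and rests on the general $W$-algebra machinery (the Gan--Ginzburg PBW theorem and Premet's embedding), whereas these coefficients demand an honest computation with Premet's PBW generators, with careful bookkeeping of the normalizations relating $[u,v]\in\CC e$, the form $\omega_\chi$, and the Killing form.
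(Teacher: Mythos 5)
The paper does not prove this theorem: it is quoted verbatim from Premet as \cite[Theorem 6.1]{P2}, and the surrounding Lemmas~\ref{minimal 1},~\ref{minimal 2} take Theorem~\ref{Premet_key} as a black box and only derive explicit isomorphisms from it. There is therefore no in-paper argument to compare yours against; what follows is an assessment of the sketch on its own terms.

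Structurally, your outline reproduces Premet's strategy correctly: the short $\ad h$-grading, generation via $\gr C=2e+\kappa$, relations (i)--(ii) from $\iota:\q\hookrightarrow\U(\g,e)$ and from $\rho$, and the filtered Hilbert-series argument for completeness. The gap is in the lower-order analysis of relation (iii). The parity automorphism $\sigma=\exp(\pi i\,\ad(h/2))$ fixes $(e,h,f)$, hence descends to $\U(\g,e)$, acts by $+1$ on $\Theta(\zz_\chi(0))$ and $C$ and by $-1$ on $\Theta(\zz_\chi(1))$; this does eliminate the $\Theta(\zz_\chi(1))$-valued (Kazhdan degree $3$) part of $[\Theta_u,\Theta_v]$. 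But it does nothing to the degree-$2$ part valued in $\Theta(\zz_\chi(0))$, and neither does $\q$-equivariance: already for $\g=\ssl_{n+1}$ one has $\q\cong\gl_{n-1}$ and $\zz_\chi(1)\cong V_{n-1}\oplus V_{n-1}^*$, so $\Lambda^2\zz_\chi(1)\supset V_{n-1}\otimes V_{n-1}^*\cong\gl_{n-1}$ and $\mathrm{Hom}_{\q}(\Lambda^2\zz_\chi(1),\zz_\chi(0))\neq 0$, while $\sigma$ is $+1$ on that summand as well. So the assertion that this term vanishes ``by equivariance together with an auxiliary weight argument'' is not justified; it must instead be matched against the Weyl-ordering corrections hidden in the symmetrized products $\Theta_{[u,z_i]^{\sharp}}\Theta_{[v,z_i^*]^{\sharp}}$ and in $\Theta_{\Cas}$ by the same sort of explicit PBW computation that you acknowledge is needed for $c_0$. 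That computation is the real content of Premet's proof and cannot be deferred at this one step.
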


%\subsubsection{\textbf{Algebras $U(\ssl_{n+1}, e=E_{n,n+1})$}.}
%$  \ $

  Our goal is to construct explicitly isomorphisms of Theorem~\ref{main 1} for those two cases, that is, for
 $\g=\ssl_{n+1},\ \spn_{2n+2}$ and a minimal nilpotent $e\in \g$.

\begin{lem}\label{minimal 1}
 Formulas
 \begin{equation}\label{explicit1}
 \wt{\gamma}(\z_0)=\frac{c_0-C}{2},\
 \wt{\gamma}(y_i)=\Theta_{E_{i,n+1}},\
 \wt{\gamma}(x_i)=\Theta_{E_{n,i}},\
 \wt{\gamma}(A)=\Theta_A,\ A\in \gl_n\simeq \zz_\chi(0)
\end{equation}
 establish the isomorphism $H_2(\gl_{n-1})\overset{\sim}\longrightarrow U(\ssl_{n+1}, E_{n,n+1})$ from Theorem~\ref{main 1}(a).
\end{lem}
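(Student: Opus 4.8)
My plan is to verify directly that the assignment~\eqref{explicit1} respects the defining relations of $H_2(\gl_{n-1})$, using Premet's generators-and-relations presentation of the minimal-nilpotent $W$-algebra (Theorem~\ref{Premet_key}), and then to promote the resulting homomorphism to an isomorphism. For the latter step: once $\wt{\gamma}$ is known to be an algebra homomorphism it automatically preserves the filtrations (the generators on the right-hand side have the prescribed Kazhdan degrees), and by the PBW property of $H_2(\gl_{n-1})$ together with Theorem~\ref{gr} it induces a surjection $\gr\wt{\gamma}\colon \U(\gl_{n-1})\ltimes S(V_{n-1}\oplus V_{n-1}^{*})[\z_0]\twoheadrightarrow \CC[\s_{n-1,2}]$; matching this with the explicit parametrisation of the slice $\s_{n-1,2}$ (as in the proof of Theorem~\ref{main 2}(a)) shows it is an isomorphism, hence so is $\wt{\gamma}$. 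Since $\wt{\gamma}$ restricts to $\Theta$ on $\ssl_{n-1}\ltimes V_{n-1}$, the uniqueness in Theorem~\ref{main 1}(a) identifies it with the isomorphism built there (the images of $I_{n-1}$, $x_i$, $\z_0$ then being pinned down by the normalisation of Theorem~\ref{main_explicit}).

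The first concrete step is to record the data attached to $e=E_{n,n+1}\in \ssl_{n+1}$. With $h=E_{nn}-E_{n+1,n+1}$ and $f=E_{n+1,n}$, the $\ad(h)$-grading has $\g(i)=0$ for $|i|\ge 3$, $\g(2)=\CC e$, $\zz_{\chi}(0)=\gl_{n-1}$ (embedded so that $I_{n-1}\mapsto T_{n-1,2}$), $\zz_{\chi}(1)=\g(1)=V_{n-1}\oplus V_{n-1}^{*}$ with $V_{n-1}=\spa\{E_{i,n+1}\}_{i\le n-1}$ and $V_{n-1}^{*}=\spa\{E_{n,i}\}_{i\le n-1}$, and $\g(-1)=\spa\{E_{i,n}\}_{i\le n-1}\oplus \spa\{E_{n+1,i}\}_{i\le n-1}$. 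I would then fix the symplectic basis $z_i=E_{i,n}$, $z_i^{*}\in\CC\cdot E_{n+1,i}$ of $\g(-1)$, scaled so that $\omega_{\chi}(z_i^{*},z_j)=\delta_{ij}$. With these identifications, parts (i) and (ii) of Theorem~\ref{Premet_key} immediately give $[\wt{\gamma}(A),\wt{\gamma}(B)]=\wt{\gamma}([A,B])$ for $A,B\in\gl_{n-1}$, the centrality of $\wt{\gamma}(\z_0)=(c_0-C)/2$, and --- after elementary computations with commutators of matrix units (which show $[A,E_{i,n+1}]=A(E_{i,n+1})$ and $[A,E_{n,i}]=A(E_{n,i})$ for $A\in\gl_{n-1}$) --- the relations $[\wt{\gamma}(A),\wt{\gamma}(y_i)]=\wt{\gamma}(A(y_i))$ and $[\wt{\gamma}(A),\wt{\gamma}(x_i)]=\wt{\gamma}(A(x_i))$. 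What remains are the relations $[x_i,x_j]=[y_i,y_j]=0$ and $[y_i,x_j]=\z_0\,r_0(y_i,x_j)+r_2(y_i,x_j)$, all of which must be extracted from part (iii).

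For part (iii): when $u,v$ are both in $V_{n-1}$ or both in $V_{n-1}^{*}$ one has $[u,v]=0$, so only the quadratic sum $\tfrac12\sum_k\big(\Theta_{[u,z_k]^{\sharp}}\Theta_{[v,z_k^{*}]^{\sharp}}+\Theta_{[v,z_k^{*}]^{\sharp}}\Theta_{[u,z_k]^{\sharp}}\big)$ survives, and evaluating the brackets $[u,z_k]^{\sharp}\in\Theta(\zz_{\chi}(0))$ one checks this sum vanishes; this yields $[\wt{\gamma}(y_i),\wt{\gamma}(y_j)]=[\wt{\gamma}(x_i),\wt{\gamma}(x_j)]=0$. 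When $u=E_{i,n+1}\in V_{n-1}$ and $v=E_{n,j}\in V_{n-1}^{*}$ one has $[u,v]=-\delta_{ij}e$, so $(f,[u,v])$ is a nonzero multiple of $\delta_{ij}=r_0(y_i,x_j)$; substituting $C=c_0-2\wt{\gamma}(\z_0)$ into $\tfrac12(f,[u,v])(C-\Theta_{\Cas}-c_0)$ produces exactly the term $\wt{\gamma}(\z_0\,r_0(y_i,x_j))$ plus an additional summand proportional to $\delta_{ij}\Theta_{\Cas}$. It then remains to check that this extra summand, together with the quadratic sum, equals $\wt{\gamma}(r_2(y_i,x_j))$, where $r_2$ is the symmetrisation of $(x,A^2y)+(x,Ay)\tr A+\tfrac12(x,y)\big((\tr A)^2+\tr A^2\big)$.

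I expect this last identification to be the main obstacle. Writing the brackets $[u,z_k]^{\sharp}$, $[v,z_k^{*}]^{\sharp}$ in matrix units, one must keep track of the correction $x^{\sharp}=x-\tfrac12(x,h)h$ (which projects $\g(0)$ onto $\zz_{\chi}(0)$), of the normalisations of the Killing form and of the symplectic basis, and of the precise form of the Casimir $\Theta_{\Cas}$ of $\Theta(\gl_{n-1})$, and then recognise the resulting element of $U'\cong\U(\gl_{n-1})[\Theta_0]$ as the PBW-symmetrisation $r_2(y_i,x_j)$: the $A^2$- and $A\tr A$-type terms come from the unfolded $\sharp$-brackets, the $(\tr A)^2$- and $\tr A^2$-terms (together with the constant $c_0$) come from $\Theta_{\Cas}$, and the symmetric ordering of the products in (iii) is exactly what forces the symmetrisation. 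Granting this computation, $\wt{\gamma}$ is a well-defined algebra homomorphism, and the argument of the first paragraph then promotes it to the isomorphism of Theorem~\ref{main 1}(a).
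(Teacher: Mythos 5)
Your proposal follows essentially the same route as the paper: invoke Premet's presentation (Theorem~\ref{Premet_key}), observe that relations (i), (ii), and the ``pure'' cases of (iii) (both $u,v\in V_{n-1}$ or both in $V_{n-1}^*$) give the easy relations, and reduce everything to the explicit matrix-unit computation that identifies the quadratic sum plus the $\Theta_{\Cas}$-contribution with $r_2(y_p,x_q)$. The paper phrases the wrap-up as determining $\Ker(\gamma)$ for the evident epimorphism rather than checking $\wt{\gamma}$ is well-defined and passing to associated graded, but the two are interchangeable, and you correctly locate the real work in the $r_2$-identification, which the paper carries out by expanding $[E_{p,n+1},z_{i+s}]^\sharp$, $[E_{nq},z_{i+s}^*]^\sharp$, and $\gamma^{-1}(\Theta_{\Cas})$ in matrix units.
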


\begin{proof}
  Choose a natural $\ssl_2$-triple $(e,h,f)=(E_{n,n+1},E_{n,n}-E_{n+1,n+1},E_{n+1,n})$ in $\g=\ssl_{n+1}$.
 Then $\{E_{i,n+1}, E_{ni}\}_{1\leq i\leq n-1}$ form a basis of $\zz_{\chi}(1)$, while
 $\{E_{ij}, E_{11}-E_{kk},\ T_{n-1,2}\}_{1\leq i\ne j\leq n-1}^{2\leq k\leq n-1}$ form a basis of $\zz_{\chi}(0)$.
  Identifying $\zz_{\chi}(1)$ with $V_{n-1}\oplus V_{n-1}^*$, we get an epimorphism of algebras
 $\gamma:U(\gl_{n-1})\ltimes T(V_{n-1}\oplus V_{n-1}^*)[C]\twoheadrightarrow U(\ssl_{n+1},E_{n,n+1})$ defined by
  $$\gamma(C)=C,\gamma(y_i)=\Theta_{E_{i,n+1}}, \gamma(x_i)=\Theta_{E_{n,i}}, \gamma(I_{n-1})=\Theta_{T_{n-1,2}}, \gamma(A)=\Theta_A\ (A\in \ssl_{n-1}\subset \ssl_{n+1}).$$
 According to Theorem~\ref{Premet_key}, its kernel $\Ker(\gamma)$ is generated by

 $w\otimes w'-w'\otimes w-\frac{1}{2}(f,[\gamma(w),\gamma(w')])(C-\gamma^{-1}(\Theta_{\Cas})-c_0)-
  \gamma^{-1}(\Sym\ \sum_{1\leq i\leq2s}{\Theta_{[w,z_i]^{\sharp}}\Theta_{[w',z_i^*]^{\sharp}}}),$

\noindent
  with $w,w'\in V_{n-1}\oplus V_{n-1}^*$,\ $\gamma^{-1}(\Theta_\varsigma)\in \gl_{n-1}\oplus V_{n-1}\oplus V_{n-1}^*$
 well-defined for $\varsigma\in \zz_{\chi}(0)\oplus \zz_{\chi}(1)$.

 Choose the Witt basis of $\g(-1)$ as $z_i:=E_{i,n}, z_{i+s}:=E_{n+1,i},\
1\leq i\leq n-1=:s$.

$\bullet$
 For $w,w'\in V_{n-1}$ or $w,w'\in V_{n-1}^*$ we just get $w\otimes w'-w'\otimes w\in \Ker(\gamma)$.

$\bullet$ For $w=y_p\in V_{n-1},w'=x_q\in V_{n-1}^*$ we get the
following element of $\Ker(\gamma)$:
 $$y_p\otimes x_q-x_q\otimes y_p+\frac{\delta_p^q}{2}\left(C-\gamma^{-1}(\Theta_{\Cas})-c_0\right)-
   \gamma^{-1}(\Sym\ \sum_{1\leq i\leq 2s}{\Theta_{[E_{p,n+1},z_i]^{\sharp}}\Theta_{[E_{nq},z_i^*]^{\sharp}}}).$$
\noindent
 Let us first compute the above sum. For $1\leq i\leq s$ we obviously have $[E_{p,n+1},z_i]=0$, while
  $$[E_{p,n+1},z_{i+s}]=E_{pi}-\delta_p^iE_{n+1,n+1}\Rightarrow
    [E_{p,n+1},z_{i+s}]^{\sharp}=E_{pi}-\frac{1}{2}\delta_p^i(E_{nn}+E_{n+1,n+1}).$$
\noindent
 A similar argument implies
  $$[E_{nq},z_{i+s}^*]=E_{iq}-\delta_q^iE_{nn}\Rightarrow[E_{nq},z_{i+s}^*]^{\sharp}=E_{iq}-\frac{1}{2}\delta_q^i(E_{nn}+E_{n+1,n+1}).$$
\noindent
 Thus
  $$\Theta_{[E_{p,n+1},z_{i+s}]^{\sharp}}=\gamma(E_{pi})+\frac{1}{2}\delta_p^i\gamma(I_{n-1}),\
    \Theta_{[E_{nq},z_{i+s}^*]^{\sharp}}=\gamma(E_{iq})+\frac{1}{2}\delta_q^i\gamma(I_{n-1}),$$
\noindent
  so that
  $$\gamma^{-1}(\Sym\ \sum{\Theta_{[E_{p,n+1},z_i]^{\sharp}}\Theta_{[E_{nq},z_i^*]^{\sharp}}})=
    \Sym(\sum{E_{pi}E_{iq}})+\Sym(I_{n-1}\cdot E_{pq})+ \frac{1}{4}\delta_p^q I_{n-1}^2.$$
  On the other hand, since $\gamma^{-1}(\gamma(E_{lk})^*)=E_{kl}+\frac{1}{2}\delta_k^lI_{n-1}$, we get
    $$\gamma^{-1}(\Theta_{\Cas})=\sum_{k\ne l}{E_{kl}E_{lk}}+\sum_{k}{E_{kk}^2}+\frac{1}{2}I_{n-1}^2.$$
  Let $\wt{R}_{n-1}:=\sum{E_{ii}^2}+\frac{1}{2}\sum_{i\ne j}{(E_{ii}E_{jj}+E_{ij}E_{ji})}$. Then we get

 $y_p\otimes x_q-x_q\otimes y_p-\left(\frac{c_0-C}{2}\cdot \underbrace{\delta_p^q}_{r_0(y_p,x_q)}+
  \underbrace{\Sym(\sum{E_{pi}E_{iq}}+I_{n-1}\cdot E_{pq}+\delta_p^q\wt{R}_{n-1})}_{r_2(y_p,x_q)} \right)\in \Ker(\gamma)$.

\noindent
 This implies the statement of the lemma.
\end{proof}

%\subsubsection{\textbf{Algebras $U(\spn_{2n+2}, e=E_{1,2n+2})$}.}
%$ \ $

% We provide explicitly the isomorphism of Theorem~\ref{main 1} in this case.

\medskip

\begin{lem}\label{minimal 2}
 Formulas
\begin{equation}\label{explicit2}
 \wt{\gamma}(\xi_0)=\frac{c_0-C}{2},\
 \wt{\gamma}(y_i)=\frac{\Theta_{v_i}}{\sqrt{2}},\
 \wt{\gamma}(A)=\Theta_A,\ A\in\spn_{2n}\simeq \zz_\chi(0)
\end{equation}
 establish the isomorphism $H_1(\spn_{2n})\overset{\sim}\longrightarrow U(\spn_{2n+2},E_{1,2n+2})$ from Theorem~\ref{main 1}(b).
\end{lem}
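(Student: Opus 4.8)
The plan is to follow the proof of Lemma~\ref{minimal 1} essentially verbatim, with $\gl_{n-1}$ replaced by $\spn_{2n}$ and the pair $(V_{n-1},V_{n-1}^{*})$ by the single self-dual module $V_{2n}$; the extra factor $\sqrt2$ is exactly the normalization predicted by Theorems~\ref{main 1}(b) and~\ref{main 2}(b) (which give $\eta_1=\bar{\eta}_1=2$, hence $\lambda^{2}=1/2$ for the isomorphism $\bar{\Theta}_{(1)}(y)=\lambda\,\Theta(y)$). First I would fix the standard $\ssl_2$-triple $(e,h,f)=(E_{1,2n+2},\,E_{1,1}-E_{2n+2,2n+2},\,E_{2n+2,1})$ in $\g=\spn_{2n+2}$, noting that $f\in\spn_{2n+2}$ in the antidiagonal presentation. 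The $\ad(h)$-grading is $\g=\g(-2)\oplus\g(-1)\oplus\g(0)\oplus\g(1)\oplus\g(2)$ with $\g(\pm2)=\CC e,\CC f$, while $\zz_\chi(0)=\g(0)$ is the middle $2n\times2n$ block $\simeq\spn_{2n}$ and $\g(1)=\zz_\chi(1)$ is the $2n$-dimensional \emph{arm} spanned by the entries $E_{1,j}$ and $E_{i,2n+2}$ ($2\le i,j\le2n+1$, subject to the symplectic constraint), which we identify with $V_{2n}$ via $\{v_i\}$ as in the proof of Theorem~\ref{main 1}(b). Finally I would pick a Witt basis $z_1,\dots,z_{2n}$ of $\g(-1)$ normalized so that $\omega_\chi(z_{i+n},z_j)=\delta_i^j$.

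Next, Theorem~\ref{Premet_key} applied to $(\g,e)$ provides a surjection
$$\gamma\colon \U(\spn_{2n})\ltimes T(V_{2n})[C]\twoheadrightarrow \U(\spn_{2n+2},E_{1,2n+2}),\qquad \gamma(C)=C,\ \ \gamma(A)=\Theta_A,\ \ \gamma(v_i)=\Theta_{v_i},$$
whose kernel is generated by the elements
$$w\otimes w'-w'\otimes w-\tfrac12(f,[\gamma(w),\gamma(w')])\bigl(C-\gamma^{-1}(\Theta_{\Cas})-c_0\bigr)-\gamma^{-1}\!\Bigl(\Sym\!\!\sum_{1\le i\le2n}\Theta_{[w,z_i]^{\sharp}}\Theta_{[w',z_i^{*}]^{\sharp}}\Bigr),\qquad w,w'\in V_{2n}.$$
Here $(f,[w,w'])=\langle\chi,[w,w']\rangle=\omega_\chi(w,w')$ equals, up to a fixed nonzero constant, the symplectic form $\omega$ on $V_{2n}$ that enters $r_0=\beta_0$. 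Computing $[w,z_i]^{\sharp}$ and $[w',z_i^{*}]^{\sharp}$ explicitly as $2n\times2n$-block matrices, exactly as the $[E_{p,n+1},z_{i+s}]^{\sharp}$-computation in the proof of Lemma~\ref{minimal 1}, I would rewrite $\Sym\sum_i\Theta_{[w,z_i]^{\sharp}}\Theta_{[w',z_i^{*}]^{\sharp}}$, after absorbing an appropriate multiple of $\gamma^{-1}(\Theta_{\Cas})$ and of the scalar $c_0$, as a nonzero multiple of the symmetrization of $\omega(w,A^2w')+\tfrac12\omega(w,w')\tr A^2$, i.e. of $r_2(w,w')$, the leftover scalar combining into a multiple of $C$. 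After rescaling $w\mapsto w/\sqrt2$ the kernel generators become
$$w\otimes w'-w'\otimes w-\Bigl(\tfrac{c_0-C}{2}\,r_0(w,w')+r_2(w,w')\Bigr),$$
which is precisely the defining relation of $H_1(\spn_{2n})$ once the central variable $\z_0$ is sent to $\tfrac{c_0-C}{2}$; this yields the isomorphism $\wt{\gamma}$. By the PBW property the rescaling factor is a square root of $\eta_1^{-1}$, and Theorem~\ref{main 2}(b) forces $\eta_1=2$, so it equals $\sqrt2$; fixing its sign selects one of the two isomorphisms of Theorem~\ref{main 1}(b).

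The main obstacle is the same bookkeeping that appears in easier form in Lemma~\ref{minimal 1}: carrying out the $\sharp$-computation and the Casimir rearrangement correctly in the $\spn$-setting with the antidiagonal form, and tracking all the scalar factors — the constant relating $\omega_\chi$ to $\omega$, the two factors $\tfrac12$ (one from relation (iii) of Theorem~\ref{Premet_key}, one from $[v_i/\sqrt2,v_j/\sqrt2]$), and the explicit value of $c_0$ — so that the scalar-central part of $[\Theta_{v_i}/\sqrt2,\Theta_{v_j}/\sqrt2]$ comes out exactly as $\tfrac{c_0-C}{2}\,\omega(v_i,v_j)$ and the remaining part as $r_2(v_i,v_j)$.
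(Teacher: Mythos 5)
Your plan reproduces the paper's own argument: fix the same $\ssl_2$-triple, apply Theorem~\ref{Premet_key} to get the surjection $\gamma$ onto $\U(\spn_{2n+2},E_{1,2n+2})$, compute the kernel generators via the Witt basis and the $\sharp$-map, identify the result with $2r_2(y_q,y_p)+(c_0-C)r_0(y_q,y_p)$, and absorb the factor $2$ by the rescaling $y\mapsto y/\sqrt2$. This is exactly the structure of the proof in the text, and your cross-check of the $\sqrt2$ via $\eta_1=\bar\eta_1=2$ from Theorem~\ref{main 2}(b) is consistent with the computation there.
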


\begin{proof}
  First, choose an $\ssl_2$-triple $(e,h,f)=(E_{1,2n+2},E_{11}-E_{2n+2,2n+2},E_{2n+2,1})$ in $\g=\spn_{2n+2}$.
 Then $\{v_k:=E_{k+1,2n+2}+(-1)^kE_{1,2n+2-k}\}_{1\leq k\leq 2n}$ form a basis of $\zz_{\chi}(1)$, while $\zz_{\chi}(0)\simeq \spn_{2n}$.
 Identifying $\zz_{\chi}(1)$ with $V_{2n}$ via $y_k\mapsto v_k$, we get an algebra epimorphism
 $$\gamma:U(\spn_{2n})\ltimes T(V_{2n})[C]\twoheadrightarrow U(\spn_{2n+2},E_{1,2n+2}),\ \ \ C\mapsto C,\ y_i\mapsto \Theta_{v_i},\ A \mapsto \Theta_A\ (A\in \spn_{2n}).$$

 According to Theorem~\ref{Premet_key}, its kernel $\Ker(\gamma)$ is
 generated by $\{y_q\otimes y_p-y_p\otimes y_q-(\ldots)\}_{p,q\leq 2n}$. Let us now compute the expression represented by the ellipsis.

  Choose the Witt basis of $\g(-1)$ with respect to the form $(a,b)=\tr(e[a,b])$ as
 $$z_i:=\frac{(-1)^{i+1}}{2}(E_{2n+2-i,1}+(-1)^iE_{2n+2,i+1}), z_{i+s}:=E_{i+1,1}-(-1)^iE_{2n+2,2n+2-i},\ 1\leq i\leq n=:s.$$
 Since $(f,[v_q,v_p])=2(-1)^q\delta_{p+q}^{2n+1}$, the above expression in ellipsis equals to:
 $$(-1)^q\delta_{p+q}^{2n+1}(C-\gamma^{-1}(\Theta_{\Cas})-c_0)+\gamma^{-1}(\Sym(\sum_{1\leq i\leq 2s}{\Theta_{[v_q,z_i]^{\sharp}}\Theta_{[v_p,z_i^*]^{\sharp}}})),$$
 where $\gamma^{-1}(\Theta_\varsigma)\in \spn_{2n}\oplus V_{2n}$ is well-defined for any $\varsigma\in \zz_{\chi}(0)\oplus \zz_{\chi}(1)$,
 though $\gamma$ is not injective.

 For any $1\leq k,l \leq 2n,\ 1\leq j\leq n$ it is easily verified that
$$[v_k,z_j]=-\frac{1}{2}(E_{k+1,j+1}-(-1)^{k+j}E_{2n+2-j,2n+2-k})-\frac{1}{2}\delta_k^j\cdot h,$$
$$[v_l,z_{j+s}]=(-1)^{j+1}(E_{l+1,2n+2-j}+(-1)^{l-j}E_{j+1,2n+2-l})+(-1)^l\delta_{l+j}^{2n+1}\cdot h,$$
so that
$$[v_k,z_j]^{\sharp}=\frac{(-1)^{k+j}E_{2n+2-j,2n+2-k}-E_{k+1,j+1}}{2},[v_l,z_{j+s}]^{\sharp}=(-1)^{j+1}E_{l+1,2n+2-j}+(-1)^{l+1}E_{j+1,2n+2-l}.$$

 We also have
 $$\gamma^{-1}(\Theta_{\Cas})=\frac{1}{4}\sum_{i,j}(E_{j,i}+(-1)^{i+j+1}E_{2n+1-i,2n+1-j})(E_{i,j}+(-1)^{i+j+1}E_{2n+1-j,2n+1-i}).$$
 On the other hand, it is straightforward to check that
$$r_0(y_q,y_p)=(-1)^p\delta_{p+q}^{2n+1},$$
$$r_2(y_q,y_p)=\frac{(-1)^{q+1}}{4}\Sym\sum_s{(E_{s,2n+1-q}+(-1)^{s+q}E_{q,2n+1-s})(E_{p,s}+(-1)^{p+s+1}E_{2n+1-s,2n+1-p})}+$$
$$\frac{(-1)^p}{8}\delta_{p+q}^{2n+1}\Sym\sum_{i,j}{(E_{i,j}+(-1)^{i+j+1}E_{2n+1-j,2n+1-i})(E_{j,i}+(-1)^{i+j+1}E_{2n+1-i,2n+1-j})}.$$

\medskip \noindent
 To summarize, the kernel of the epimorphism $\gamma$ is generated by the elements
$$\{y_q\otimes y_p-y_p\otimes y_q-(2r_2(y_q,y_p)+(c_0-C)r_0(y_q,y_p))\}_{p,q\leq 2n}.$$
 \noindent This implies the statement of the lemma.
\end{proof}

\medskip
\section{Decompositions (*) and ($\spadesuit$) for $m=-1,\ 0$}

\medskip \noindent
$\bullet$
  Decomposition isomorphism
          $H_{\hbar,-1}(\gl_n)^{\wedge_v}\cong H_{\hbar,0}^{'}(\gl_{n-1})^{\wedge_0}{\widehat{\otimes}}_{\CC[[\hbar]]} W_{\hbar,n}^{\wedge_v}$.
\medskip

  Here $H_{\hbar,0}^{'}(\gl_{n-1})$ is defined similarly to $H_{\hbar,0}(\gl_{n-1})$ with an additional central parameter $\z_0$
 and the main relation being $[y,x]=\hbar^2\z_0r_0(y,x)$, while $H_{\hbar,-1}(\gl_n):=\U_\hbar(\gl_n\ltimes (V_n\oplus  V_n^*))$.

\emph{Notation:}
  We use $y_k,\ x_l,\ e_{k,l}$ when referring to the elements of $H_{\hbar,-1}(\gl_n)$ and capital
 $Y_i,\ X_j,\ E_{i,j}$ when referring to the elements of $H_{\hbar,0}^{'}(\gl_{n-1})$.
 We also use indices $1\leq k,l\leq n$ and $1\leq i,j,i',j'<n$ to distinguish between $\leq n$ and $<n$. Finally, set $v_n:=(0,\ldots,0,1)\in V_n$.

 The following lemma establishes explicitly the aforementioned isomorphism:

\begin{lem}
 Formulas
\begin{equation*}
 \Psi_{-1}(y_k)=z_k,\
 \Psi_{-1}(e_{n,k})=z_n\partial_k,\
 \Psi_{-1}(e_{i,j})=E_{i,j}+z_i\partial_j,\
 \Psi_{-1}(e_{i,n})=z_n^{-1}Y_i\ -\sum_{j<n}{z_n^{-1}z_jE_{i,j}}+z_i\partial_n,\
\end{equation*}
\begin{equation*}
 \Psi_{-1}(x_j)=X_j,\
 \Psi_{-1}(x_n)=-z_n^{-1}\z_0-\sum_{p<n}{z_n^{-1}z_pX_p}
\end{equation*}
define an isomorphism
 $\Psi_{-1}:H_{\hbar,-1}(\gl_n)^{\wedge_{v_n}}\iso H_{\hbar,0}^{'}(\gl_{n-1})^{\wedge_0}{\widehat{\otimes}}_{\CC[[\hbar]]} W_{\hbar,n}^{\wedge_{v_n}}$.
\end{lem}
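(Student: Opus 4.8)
The plan is to verify that the displayed formulas define a continuous homomorphism of $\CC[[\hbar]]$-algebras and that it reduces modulo $\hbar$ to an isomorphism; bijectivity then follows by the standard deformation-theoretic argument, since both sides are flat and complete over $\CC[[\hbar]]$ (reduce to the associated graded in $\hbar$). Observe first that, although $z_n^{-1}$ does not lie in $W_{\hbar,n}$, it is a well-defined element of the completion $W_{\hbar,n}^{\wedge_{v_n}}$, since $z_n=1+(z_n-v_n)$ with $z_n-v_n$ topologically nilpotent; thus one reads the formulas from the start as taking values in $H_{\hbar,0}'(\gl_{n-1})^{\wedge_0}\widehat{\otimes}_{\CC[[\hbar]]}W_{\hbar,n}^{\wedge_{v_n}}$ (or first in $H_{\hbar,0}'(\gl_{n-1})\otimes_{\CC[\hbar]}W_{\hbar,n}[z_n^{-1}]$).

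Next I would check that $\Psi_{-1}$ respects the defining relations of $H_{\hbar,-1}(\gl_n)=\U_\hbar(\gl_n\ltimes(V_n\oplus V_n^*))$, namely $[\Psi_{-1}(a),\Psi_{-1}(b)]=\hbar^2\Psi_{-1}([a,b])$ for $a,b$ ranging over $\gl_n$, $V_n$, $V_n^*$. The relations $[V_n,V_n]=[V_n^*,V_n^*]=[V_n,V_n^*]=0$ are immediate: the images $z_k$ ($k\le n$), $X_j$ ($j<n$) and $-z_n^{-1}\z_0-\sum_p z_n^{-1}z_pX_p$ involve no $\partial$'s and commute pairwise. For the bracket relations among the $e_{k,l}$ and for the $\gl_n$-action relations $[e_{k,l},y_p]$, $[e_{k,l},x_p]$, I would split into cases according to whether an index equals $n$: when no index equals $n$ the identity reduces to the evident one in $\gl_{n-1}\ltimes(\text{Weyl})$, while the cases involving $e_{n,k}$ or $e_{i,n}$ reduce to short manipulations using $[z_n\partial_k,z_n^{-1}]=-\hbar^2\delta_{kn}z_n^{-1}$ and $[z_n\partial_k,z_n^{-1}z_j]=\hbar^2\delta_{kj}-\hbar^2\delta_{kn}z_n^{-1}z_j$ in $W_{\hbar,n}^{\wedge_{v_n}}$. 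I would then check that $\Psi_{-1}$ sends the ideal $\wt{I}_{v_n}\subset H_{\hbar,-1}(\gl_n)$ (topologically generated by $\hbar$, by $\gl_n$, by $V_n^*$, and by $\{y_k\}_{k<n}\cup\{y_n-v_n\}$) into the augmentation ideal of the target generated by $\hbar,\z_0,E_{i,j},Y_i,X_j,\partial_k$ ($k\le n$), $z_j$ ($j<n$) and $z_n-v_n$; this holds term by term, using in particular $\Psi_{-1}(y_n)-v_n=z_n-v_n$. Hence $\Psi_{-1}$ extends to a continuous homomorphism of the completions.

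To conclude that $\Psi_{-1}$ is bijective it suffices, by $\CC[[\hbar]]$-flatness and completeness of both sides, to show that $\Psi_{-1}\bmod\hbar$ is an isomorphism. Modulo $\hbar$ we obtain a morphism $\Psi_{-1}^{\mathrm{cl}}$ of formal neighborhoods of points in smooth schemes: the source is the completion at $v_n$ of the affine space $\operatorname{Spec}S(\gl_n\oplus V_n\oplus V_n^*)$, and the target is the completion at the appropriate point of $\operatorname{Spec}\big(S(\gl_{n-1}\oplus V_{n-1}\oplus V_{n-1}^*)[\z_0]\big)\times\operatorname{Spec}\CC[z_i,\partial_i]$; both are smooth of dimension $n^2+2n$. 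Inspecting the linear parts of the formulas, $\Psi_{-1}^{\mathrm{cl}}$ carries the regular system of parameters $\{e_{n,k},\,e_{i,j},\,e_{i,n},\,y_k\ (k<n),\,y_n-v_n,\,x_j,\,x_n\}$ of the source to $\{\partial_k,\,E_{i,j},\,Y_i,\,z_k,\,z_n-v_n,\,X_j,\,-\z_0\}$ modulo terms of order $\geq 2$, and the latter is a regular system of parameters for the target; hence $\Psi_{-1}^{\mathrm{cl}}$ induces a bijection on cotangent spaces and is therefore an isomorphism of formal (Poisson) schemes. Lifting this back yields that $\Psi_{-1}$ is an isomorphism. (Alternatively one may guess the inverse outright — $z_k\mapsto y_k$, $z_n^{-1}\mapsto y_n^{-1}$, $\partial_k\mapsto y_n^{-1}e_{n,k}$, $E_{i,j}\mapsto e_{i,j}-y_iy_n^{-1}e_{n,j}$, and so on — and verify it is a two-sided inverse, but this duplicates the relation check above.)

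The step I expect to be the main obstacle is the relation $[\Psi_{-1}(e_{i,n}),\Psi_{-1}(e_{i',n})]=0$ for $i,i'<n$. Here $\Psi_{-1}(e_{i,n})=z_n^{-1}Y_i-\sum_j z_n^{-1}z_jE_{i,j}+z_i\partial_n$ carries all three types of terms at once, and one has to track a cancellation between contributions crossing the two tensor factors: the $[\partial_n,z_n^{-1}]$ and $[\partial_n,z_n^{-1}z_j]$ commutators produced by the $z_i\partial_n$ pieces must cancel exactly against the $\gl_{n-1}$-type terms coming from $[z_n^{-1}z_jE_{i,j},z_n^{-1}Y_{i'}]$ and $[z_n^{-1}z_jE_{i,j},z_n^{-1}z_{j'}E_{i',j'}]$. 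The analogous but shorter bookkeeping is required for $[\Psi_{-1}(e_{i,n}),\Psi_{-1}(x_n)]$ and $[\Psi_{-1}(e_{i,n}),\Psi_{-1}(e_{n,k})]$.
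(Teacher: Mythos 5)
Your proposal is correct, and it reaches the conclusion by a genuinely different route for the bijectivity part. The first half of your argument (checking that $\Psi_{-1}$ respects the defining relations of $H_{\hbar,-1}(\gl_n)$, which makes sense because $z_n^{-1}$ exists in $W_{\hbar,n}^{\wedge_{v_n}}$) is the same content the paper points to: it leaves this lemma to the reader with the remark that the verifications mirror those displayed for the $m=0$ case, and you correctly single out $[\Psi_{-1}(e_{i,n}),\Psi_{-1}(e_{i',n})]$, $[\Psi_{-1}(e_{i,n}),\Psi_{-1}(x_n)]$ and $[\Psi_{-1}(e_{i,n}),\Psi_{-1}(e_{n,k})]$ as the cases with nontrivial cancellations (your auxiliary commutators $[z_n\partial_k,z_n^{-1}]$ and $[z_n\partial_k,z_n^{-1}z_j]$ check out). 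Where you diverge is the bijectivity: the paper, in the analogous $m=0$ proof, exhibits the explicit inverse (and you list it as an alternative at the end), whereas you instead invoke the standard fact that a morphism of $\CC[[\hbar]]$-flat complete algebras is an isomorphism once it is so modulo $\hbar$, and then verify the latter by computing the linear parts of the formulas and observing they carry a regular system of parameters of the source (dimension $n^2+2n$, plus $\hbar$) bijectively to one for the target. I checked that your dimension count $((n-1)^2+2(n-1)+1)+2n=n^2+2n$ and the block-by-block linearization ($e_{n,k}\mapsto\partial_k$, $e_{i,j}\mapsto E_{i,j}$, $e_{i,n}\mapsto Y_i$, $y_k\mapsto z_k$, $y_n-1\mapsto z_n-1$, $x_j\mapsto X_j$, $x_n\mapsto-\z_0$) are accurate. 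Your route trades the second round of commutator bookkeeping needed to verify an explicit inverse for a soft deformation argument plus a cotangent-space check; both are sound, and yours is arguably the cleaner way to see surjectivity, while the paper's gives a ready-made inverse formula.
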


 Its proof is straightforward and is left to an interested reader
 (most of the verifications are the same as those carried out in the proof of Lemma~\ref{C.2} below).

\medskip \noindent
$\bullet$
  Decomposition isomorphism
          $H_{\hbar,0}(\gl_n)^{\wedge_v}\cong H_{\hbar,1}^{'}(\gl_{n-1})^{\wedge_0}{\widehat{\otimes}}_{\CC[[\hbar]]} W_{\hbar,n}^{\wedge_v}$.
\medskip

  Here $H_{\hbar,1}^{'}(\gl_{n-1})$ is an algebra defined similarly to $H_{\hbar,1}(\gl_{n-1})$ with an additional central parameter $\z_0$
 and the main relation being $[y,x]=\hbar^2(\z_0 r_0(y,x)+r_1(y,x))$.
 We follow analogous conventions as for variables $y_k,\ x_l,\ e_{k,l},\ Y_i,\ X_j,\ E_{i,j}$ and indices $i,j,i',j',k,l$.

 The following lemma establishes explicitly the aforementioned isomorphism:

\begin{lem}\label{C.2}
 Formulas
\begin{equation*}
 \Psi_0(y_k)=z_k,\
 \Psi_0(e_{n,k})=z_n\partial_k,\
 \Psi_0(e_{i,j})=E_{i,j}+z_i\partial_j,\
 \Psi_0(e_{i,n})=z_n^{-1}Y_i\ -\sum_{j<n}{z_n^{-1}z_jE_{i,j}}+z_i\partial_n,
\end{equation*}
\begin{equation*}
 \Psi_0(x_j)=-\partial_j+X_j,\
 \Psi_0(x_n)=-\partial_n-\sum_{i<n}{z_n^{-1}z_iX_i}-z_n^{-1}(\z_0+\sum_{i<n}E_{i,i})
\end{equation*}
 define an isomorphism
 $\Psi_0:H_{\hbar,0}(\gl_n)^{\wedge_{v_n}}\iso H_{\hbar,1}^{'}(\gl_{n-1})^{\wedge_0}{\widehat{\otimes}}_{\CC[[\hbar]]} W_{\hbar,n}^{\wedge_{v_n}}$.
\end{lem}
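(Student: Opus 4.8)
By the Decomposition Theorem~\ref{decomposition} applied to $\Aa_\hbar=H_{\hbar,0}(\gl_n)$ at the point $v_n$ (it is a graded deformation of $\CC[\gl_n\oplus V_n\oplus V_n^*]$ with Poisson bracket of degree $-2$ and commutator in $\hbar^2\Aa_\hbar$), an isomorphism of the required shape exists abstractly, with the symplectic factor and the centralizer factor a priori unidentified; the content of the lemma is that these factors are $W_{\hbar,n}$ and $H_{\hbar,1}^{'}(\gl_{n-1})$ and that the splitting is the one written down. Accordingly the plan is to verify the stated formulas directly. First I would check that the right-hand sides make sense: since $z_n-1$ lies in the maximal ideal of $v_n$ in $W_{\hbar,n}^{\wedge_{v_n}}$ it is topologically nilpotent, so $z_n^{-1}=\sum_{k\ge 0}(1-z_n)^k$ converges, and the remaining symbols are honest elements of $H_{\hbar,1}^{'}(\gl_{n-1})^{\wedge_0}\widehat{\otimes}_{\CC[[\hbar]]}W_{\hbar,n}^{\wedge_{v_n}}$. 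Then I would observe that $\Psi_0$ carries the generators $\gl_n,\ V_n^*,\ y_1,\dots,y_{n-1}$ and $y_n-1$ of the maximal ideal of $v_n$ into the maximal ideal of the point $(0,v_n)$ of the target; once the relations are checked, this makes $\Psi_0$ continuous for the $v_n$-adic topology on $H_{\hbar,0}(\gl_n)$ and hence forces it to extend to the completion.

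The bulk of the work is verifying that the images respect the defining relations of $H_{\hbar,0}(\gl_n)$: for $m=0$ these are $[y,y']=0$, $[x,x']=0$, $[A,y]=\hbar^2 A(y)$, $[A,x]=\hbar^2 A(x)$ and $[A,B]=\hbar^2[A,B]$ for $A,B\in\gl_n$, together with $[y_k,x_l]=\hbar^2\delta_{kl}$ (since $r_0$ is the unit pairing). The relations not involving the index $n$, and those involving only $\Psi_0(y_n)=z_n$, are immediate: the ``$<n$'' parts of the formulas reproduce the obvious embedding of $\U_\hbar(\gl_{n-1})\ltimes T(V_{n-1}\oplus V_{n-1}^*)$ together with the Weyl coordinates, and these commute as required. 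The real work is in the relations involving $\Psi_0(e_{i,n})$ and $\Psi_0(x_n)$, namely $[e_{i,n},x_i]=-\hbar^2 x_n$, $[e_{i,n},x_n]=0$, $[e_{i,n},y_k]=\hbar^2\delta_{nk}y_i$, $[x_n,x_j]=0$, $[e_{i,j},x_n]=0$ and $[A,x_n]=\hbar^2 A(x_n)$. In each of these one invokes the deformed bracket $[Y_i,X_j]=\hbar^2(\z_0\delta_{ij}+r_1(Y_i,X_j))$ of $H_{\hbar,1}^{'}(\gl_{n-1})$, where $r_1(Y_i,X_j)=\Sym(E_{j,i}+\delta_{ij}\sum_{k<n}E_{k,k})$, and one traces the cancellations: the ``extra'' diagonal part $\delta_{ij}\sum_{k<n}E_{k,k}$ of $r_1$ together with the isolated $E_{i,i}$ is cancelled precisely by the Weyl commutators produced by the $z_i\partial_j$ and $z_n^{-1}z_j$ summands; the $\z_0$ part is absorbed by the explicit summand $-z_n^{-1}\z_0$ of $\Psi_0(x_n)$; and the summand $-z_n^{-1}\sum_{k<n}E_{k,k}$ of $\Psi_0(x_n)$ plays the analogous role in $[x_n,x_j]=0$. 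This bookkeeping is the main obstacle, though it is routine and entirely parallel to the cancellations carried out in the proofs of Lemmas~\ref{minimal 1} and~\ref{minimal 2}.

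Finally, to see that the continuous homomorphism $\Psi_0$ thus obtained is an isomorphism, I would reduce modulo $\hbar$. Both completed algebras are flat (i.e.\ $\hbar$-torsion-free) over $\CC[[\hbar]]$ and $\hbar$-adically complete, so it suffices to show that the induced map of special fibres --- namely $\CC[\gl_n\oplus V_n\oplus V_n^*]^{\wedge_{v_n}}$ to the corresponding completed tensor product of the symmetric algebras of $\gl_{n-1}\oplus V_{n-1}\oplus V_{n-1}^*\oplus\CC\z_0$ and of the $2n$ Weyl coordinates, given by the same formulas with $\hbar=0$ --- is an isomorphism. This is a local homomorphism of complete regular local $\CC$-algebras of equal Krull dimension $n^2+2n$, so it is enough to check it is surjective on cotangent spaces at the base point; this is clear from the linear parts of $z_k$, $z_n\partial_k$, $E_{i,j}+z_i\partial_j$, $z_n^{-1}Y_i-\cdots+z_i\partial_n$, $-\partial_j+X_j$ and $-\partial_n-z_n^{-1}\z_0-\cdots$, which together span the cotangent space of the target. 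Surjectivity plus equality of dimensions and integrality of both rings then force bijectivity, so $\Psi_0$ is an isomorphism. Alternatively one can write the inverse explicitly by $z_k\mapsto y_k$, $\partial_k\mapsto y_n^{-1}e_{n,k}$ for $k<n$, $E_{i,j}\mapsto e_{i,j}-y_iy_n^{-1}e_{n,j}$, $X_j\mapsto x_j+y_n^{-1}e_{n,j}$, determine the images of $Y_i,\ \partial_n,\ \z_0$ by back-substitution into the formulas for $\Psi_0(e_{i,n})$ and $\Psi_0(x_n)$, and then check directly that it is a two-sided inverse.
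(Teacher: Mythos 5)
Your proposal is correct and its core — verifying that $\Psi_0$ preserves the defining relations of $H_{\hbar,0}(\gl_n)$, using the deformed bracket $[Y_i,X_j]=\hbar^2(\z_0\delta_{ij}+r_1(Y_i,X_j))$ in $H_{\hbar,1}'(\gl_{n-1})$ and tracing the cancellations with the Weyl summands — is exactly what the paper does, down to identifying the three trickiest brackets involving $e_{i,n}$ and $x_n$. (Minor index slip: $r_1(Y_i,X_j)$ should be $E_{i,j}+\delta_{ij}\sum_{k<n}E_{k,k}$, not $E_{j,i}+\cdots$, but this does not affect the structure of the argument.)

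Where you depart slightly is in establishing bijectivity. The paper simply writes down the explicit inverse ($z_k\mapsto y_k$, $\partial_k\mapsto y_n^{-1}e_{n,k}$, $E_{i,j}\mapsto e_{i,j}-y_iy_n^{-1}e_{n,j}$, $X_j\mapsto x_j+y_n^{-1}e_{n,j}$, $Y_i\mapsto\sum y_k(e_{i,k}-y_iy_n^{-1}e_{n,k})$, $\z_0\mapsto-\sum y_kx_k-\sum e_{k,k}$) and asserts it is a two-sided inverse. You instead propose a flatness-plus-special-fibre argument: reduce modulo $\hbar$, observe that both sides become completed polynomial rings of the same Krull dimension $n^2+2n$, and check surjectivity on cotangent spaces at the base point; surjectivity plus equal dimension and domain-hood then forces an isomorphism. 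This is a perfectly valid alternative that avoids exhibiting all six inverse formulas, at the cost of a short commutative-algebra lemma; you also note the explicit inverse as a fallback, so the two proofs coincide up to this choice of endgame.
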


\begin{proof}
 These formulas provide a homomorphism
  $H_{\hbar,0}(\gl_n)^{\wedge_{v_n}}\longrightarrow H_{\hbar,1}^{'}(\gl_{n-1})^{\wedge_0}{\widehat{\otimes}}_{\CC[[\hbar]]} W_{\hbar,n}^{\wedge_{v_n}}$
 if and only if $\Psi_0$ preserves all the defining relations of $H_{\hbar,0}(\gl_n)$.
 This is quite straightforward and we present only the most complicated verifications, leaving the rest to an interested reader.

\medskip
\noindent
 $\circ$
  Verification of $[\Psi_0(e_{i,n}),\Psi_0(e_{i',j'})]=-\hbar^2\delta_{j'}^i\Psi_0(e_{i',n})$:
\medskip
 $$[\Psi_0(e_{i,n}),\Psi_0(e_{i',j'})]=[z_n^{-1}Y_i\ -\sum_{p<n}{z_n^{-1}z_pE_{i,p}}+z_i\partial_n,\ E_{i',j'}+z_{i'}\partial_{j'}]=$$
 $$\hbar^2(-\delta_{j'}^iz_n^{-1}Y_{i'}-z_n^{-1}z_{i'}E_{i,j'}+\delta_{j'}^i\sum_{p<n}{z_n^{-1}z_pE_{i',p}}+z_n^{-1}z_{i'}E_{i,j'}-\delta_{j'}^iz_{i'}\partial_n)
   =-\hbar^2\delta_{j'}^i\Psi_0(e_{i',n}).$$

\medskip
\noindent
 $\circ$
 Verification of $[\Psi_0(e_{i,n}),\Psi_0(x_j)]=-\hbar^2\delta_i^j\Psi_0(x_n)$:
\medskip
 $$[\Psi_0(e_{i,n}),\Psi_0(x_j)]=[z_n^{-1}Y_i-\sum_{q<n}{z_n^{-1}z_qE_{i,q}}+z_i\partial_n,-\partial_j+X_j]=$$
 $$-\hbar^2z_n^{-1}E_{i,j}+\delta_i^j\hbar^2\partial_n+\delta_i^j\hbar^2\sum_{q<n}{z_n^{-1}z_qX_q}+z_n^{-1}[Y_i,X_j]=$$
 $$-\hbar^2z_n^{-1}E_{i,j}+\delta_i^j\hbar^2(\partial_n+\sum_{q<n}{z_n^{-1}z_qX_q})+\hbar^2z_n^{-1}(E_{i,j}+\delta_i^j\sum_{i<n}{E_{i,i}}+\delta_i^j\z_0)=
   -\delta_i^j\hbar^2\Psi_0(x_n).$$

\medskip
\noindent
 $\circ$
  Verification of $[\Psi_0(e_{i,n}),\Psi_0(x_n)]=0$:
\medskip
 $$[\Psi_0(e_{i,n}),\Psi_0(x_n)]=[z_n^{-1}Y_i\ -\sum_{p<n}{z_n^{-1}z_pE_{i,p}}+z_i\partial_n,-\partial_n-\sum_{j<n}{z_n^{-1}z_jX_j}-z_n^{-1}(\z_0+\sum_{j<n}E_{j,j})]=$$
 $$\hbar^2(\sum_{p<n}z_n^{-2}z_pE_{i,p}-z_n^{-2}Y_i+z_iz_n^{-2}\z_0+z_iz_n^{-2}\sum_{j<n} E_{j,j}+z_n^{-2}Y_i-\sum_{j<n}z_jz_n^{-2}[Y_i,X_j])=0.$$

\medskip
 Once homomorphism $\Psi_0$ is established, it is easy to check that the map
 $$z_k\mapsto y_k,\ \partial_k\mapsto y_n^{-1}e_{n,k},\ E_{i,j}\mapsto e_{i,j}-y_iy_n^{-1}e_{n,j}, X_j\mapsto x_j+y_n^{-1}e_{n,j},$$
 $$Y_i\mapsto \sum {y_k(e_{i,k}-y_iy_n^{-1}e_{n,k})},\ \z_0\mapsto -\sum y_kx_k-\sum e_{k,k}$$
 provides the inverse to $\Psi_0$. This completes the proof of the lemma.
\end{proof}

\medskip \noindent
$\bullet$ Decomposition isomorphism
          $H_{\hbar,-1}(\spn_{2n})^{\wedge_v}\cong H_{\hbar,0}^{'}(\spn_{2n-2})^{\wedge_0}{\widehat{\otimes}}_{\CC[[\hbar]]} W_{\hbar,2n}^{\wedge_v}$.

\medskip
  Here $H_{\hbar,0}^{'}(\spn_{2n-2})$ is defined similarly to $H_{\hbar,0}(\spn_{2n-2})$ with an additional central parameter $\z_0$
 and the main relation being $[x,y]=\hbar^2\z_0r_0(x,y)$, while $H_{\hbar,-1}(\spn_{2n}):=\U_\hbar(\spn_{2n}\ltimes V_{2n})$.

\emph{Notation:}
  We use $y_k,\ u_{k,l}:=e_{k,l}+(-1)^{k+l+1}e_{2n+1-l,2n+1-k}$ when referring to the elements of $H_{\hbar,-1}(\spn_{2n})$ and
 $Y_i,\ U_{i,j}:=E_{i,j}+(-1)^{i+j+1}E_{2n-1-j,2n-1-i}$ when referring to the elements of $H_{\hbar,0}^{'}(\spn_{2n-2})$.
  Note that $\{u_{k,l}\}_{k,l\geq 1}^{k+l\leq 2n+1}$ is a basis of $\spn_{2n}$, while $\{U_{i,j}\}_{i,j\geq 1}^{i+j\leq 2n-1}$ is a basis of $\spn_{2n-2}$.
 We use indices $1\leq k,l\leq 2n$ and $1\leq i,j\leq 2n-2$.
 Finally, set $v_1:=(1,0,\ldots,0)\in V_{2n}$.

 The following lemma establishes explicitly the aforementioned isomorphism:

\begin{lem}
 Define $\psi_1(u_{k,l}):=z_k\partial_l+(-1)^{k+l+1}z_{2n+1-l}\partial_{2n+1-k}$ for all $k,l$.
 We also define
\begin{equation*}
 \psi_0(u_{1,k})=0,\
 \psi_0(u_{i+1,1})=Y_i,\
 \psi_0(u_{i+1,j+1})=U_{i,j},\
 \psi_0(u_{2n,1})=\z_0.
\end{equation*}
 Then formulas $\Upsilon_{-1}(y_k)=z_k,\ \Upsilon_{-1}(u_{k,l})=\psi_0(u_{k,l})+\psi_1(u_{k,l})$ give rise to an isomorphism
 $$\Upsilon_{-1}:H_{\hbar,-1}(\spn_{2n})^{\wedge_{v_1}}\iso H_{\hbar,0}^{'}(\spn_{2n-2})^{\wedge_0}{\widehat{\otimes}}_{\CC[[\hbar]]}W_{\hbar,2n}^{\wedge_{v_1}}.$$
\end{lem}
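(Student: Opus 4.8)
The plan is to prove the statement in two steps: show that $\Upsilon_{-1}$ is a well-defined algebra homomorphism, and then exhibit its inverse. Since $H_{\hbar,-1}(\spn_{2n})=\U_\hbar(\spn_{2n}\ltimes V_{2n})$, being a homomorphism means preserving three families of relations: $[y_k,y_l]=0$; $[u,y_k]=\hbar^2 u(y_k)$ for $u\in\spn_{2n}$; and $[u,u']=\hbar^2[u,u']_{\spn_{2n}}$. The first is clear, since $\Upsilon_{-1}(y_k)=z_k$ and the $z_k$ commute. For the second, $\Upsilon_{-1}(y_k)=z_k$ lies in the Weyl factor $W_{\hbar,2n}^{\wedge_{v_1}}$ and so commutes with the image of the $H_{\hbar,0}'(\spn_{2n-2})$-factor; the relation therefore reduces to the Weyl-algebra identity $[\psi_1(u_{k,l}),z_m]=\hbar^2(\delta_{lm}z_k+(-1)^{k+l+1}\delta_{2n+1-k,m}z_{2n+1-l})$, which is a one-line computation and reproduces exactly the coordinates of $u_{k,l}(y_m)\in V_{2n}$. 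In particular $\psi_1$ implements the $\spn_{2n}$-action on $V_{2n}$; it is the oscillator (quadratic-Hamiltonian) embedding $\spn_{2n}\hookrightarrow W_{\hbar,2n}$ and hence a Lie homomorphism in the homogenized sense $[\psi_1(u),\psi_1(u')]=\hbar^2\psi_1([u,u']_{\spn_{2n}})$, which drives the remaining family.

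The core of the proof is the family $[u,u']=\hbar^2[u,u']_{\spn_{2n}}$, which I would organize by the five-term grading $\spn_{2n}=\mathfrak{g}_{-2}\oplus\mathfrak{g}_{-1}\oplus\mathfrak{g}_0\oplus\mathfrak{g}_1\oplus\mathfrak{g}_2$ attached to the maximal parabolic of $\spn_{2n}$ whose nilradical is a $(2n-1)$-dimensional Heisenberg algebra; here $\mathfrak{g}_0\cong\spn_{2n-2}\oplus\CC$, $\mathfrak{g}_{\pm1}\cong V_{2n-2}$ (the $u_{i+1,1}$, resp. $u_{1,i+1}$), and $\mathfrak{g}_{\mp2}\cong\CC$ (the $u_{2n,1}$, resp. $u_{1,2n}$). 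Under this decomposition $\psi_0$ records, in the residual factor, the identity map $\spn_{2n-2}\subset\mathfrak{g}_0\to\spn_{2n-2}\subset H_{\hbar,0}'$, the embedding $\mathfrak{g}_{-1}\cong V_{2n-2}\to V_{2n-2}\subset H_{\hbar,0}'$, and the map $\mathfrak{g}_{-2}\to\CC\z_0$, while it vanishes on $\mathfrak{g}_{\geq1}$ and on the one-dimensional center of $\mathfrak{g}_0$, those pieces being carried entirely by $\psi_1$. With $\psi_1$ a Lie homomorphism, the relation check splits into cases according to which graded pieces $u,u'$ lie in; most are forced by degree reasons, and the only genuinely non-trivial one is $[\mathfrak{g}_{-1},\mathfrak{g}_{-1}]\subseteq\mathfrak{g}_{-2}$, which must reproduce precisely the defining relation $[x,y]=\hbar^2\z_0 r_0(x,y)$ of $H_{\hbar,0}'(\spn_{2n-2})$ — here the symplectic pairing $r_0=\beta_0$ is matched against the commutator on the Heisenberg nilradical, the overall constant being fixed by the choice of $v_1$. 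As in the proof of Lemma~\ref{C.2}, the images of the off-diagonal generators $u_{i+1,1}$ and $u_{2n,1}$ will need correction terms coupling the two tensor factors (the analogue of the $-\sum_{j<n}z_n^{-1}z_jE_{i,j}$ term there), and this is where the real computation lives.

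For invertibility I would argue exactly as in Lemma~\ref{C.2}: completing at $v_1$ makes $y_1$ (equivalently $z_1$) invertible, and one writes the inverse by $z_k\mapsto y_k$, $\partial_k\mapsto y_1^{-1}u_{1,k}$, and $U_{i,j},Y_i,\z_0\mapsto u_{i+1,j+1},u_{i+1,1},u_{2n,1}$ corrected by terms in the $y_1^{-1}u_{1,\ast}$; as these are algebra maps, checking that both composites act as the identity on generators suffices. The main obstacle will be the $\spn_{2n}$-bracket family: the sign bookkeeping forced by the antidiagonal form $(-1)^{k+l+1}$, and in particular confirming that the subalgebra generated by the residual images is exactly $H_{\hbar,0}'(\spn_{2n-2})$ with the stated relation, with the correct constant in front of $\z_0 r_0$, rather than a rescaling or cocycle twist; the rest runs parallel to the $\gl_n$-case already treated.
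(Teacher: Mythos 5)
The paper offers no proof of this lemma (it declares it ``straightforward and left to the reader''), so there is no argument to compare against. Your overall plan --- verify the three relation families, recognize $\psi_1$ as the vector--field embedding $\spn_{2n}\hookrightarrow W_{\hbar,2n}$ (hence a Lie map in the homogenized sense), organize the $\spn_{2n}$--bracket check by the five--step grading attached to the parabolic, and then invert using that $z_1$ is a unit in the completion --- is the natural one and mirrors the proved Lemma~\ref{C.2}.

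There is, however, a concrete gap, and it sits exactly where you wave it away. Because $\psi_0$ and $\psi_1$ land in commuting tensor factors and $\psi_1$ is a Lie map, the family $[u,u']=\hbar^2[u,u']_{\spn_{2n}}$ reduces \emph{precisely} to the assertion that $\psi_0$ is a Lie homomorphism $\spn_{2n}\to H_{\hbar,0}'(\spn_{2n-2})$. You claim all cases but $[\mathfrak{g}_{-1},\mathfrak{g}_{-1}]\subset\mathfrak{g}_{-2}$ are forced by degree reasons, but they are not: the grading element $u_{1,1}\in\mathfrak{g}_0$ satisfies $[u_{1,1},u_{i+1,1}]=-u_{i+1,1}$, yet $\psi_0(u_{1,1})=0$, so $[\psi_0(u_{1,1}),\psi_0(u_{i+1,1})]=0$, which does not equal $\hbar^2\psi_0(-u_{i+1,1})=-\hbar^2 Y_i$. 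Likewise $[u_{1,2},u_{2,1}]=u_{1,1}-u_{2,2}$ has $\psi_0$--image $-U_{1,1}$, but $[\psi_0(u_{1,2}),\psi_0(u_{2,1})]=[0,Y_1]=0$. So the formulas as written do not preserve the $\spn_{2n}$--relations at all. Your closing hedge --- that the images of $u_{i+1,1}$ and $u_{2n,1}$ ``will need correction terms coupling the two tensor factors,'' like the $z_n^{-1}Y_i-\sum z_n^{-1}z_jE_{i,j}$ terms in Lemma~\ref{C.2} --- is in fact the correct diagnosis, but you present it as a side observation rather than as the central obstruction: the lemma as stated, with no coupling terms, cannot define a homomorphism, so a valid proof must first produce the corrections (presumably $z_1^{-1}Y_i$ plus $z_1^{-1}z_{j+1}U_{i,j}$--type sums) and only then run the grading--by--cases check. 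Your sketch neither proves the lemma as stated nor registers that, as stated, it is unprovable.
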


 The proof of this lemma is straightforward and is left to an interested reader.

\medskip \noindent
$\bullet$ Finally, we have the case of $\g=\spn_{2n},\ m=0$.

\medskip
 There is also a decomposition isomorphism
 $\Upsilon_0:H_{\hbar,0}(\spn_{2n})^{\wedge_v}\iso H_{\hbar,1}(\spn_{2n-2})^{\wedge_0}{\widehat{\otimes}}_{\CC[[\hbar]]} W_{\hbar,2n}^{\wedge_v}$.

\noindent
 This isomorphism can be made explicit, but we find the formulas quite heavy and unrevealing, so we leave them to an interested reader.

\medskip

\end{document}